\theoremstyle{plain}
\newtheorem{theorem}{Theorem}[section]
\newtheorem{lemma}[theorem]{Lemma}
\newtheorem{proposition}[theorem]{Proposition}
\newtheorem{corollary}[theorem]{Corollary}
\newtheorem*{theorem*}{Theorem}
\newtheorem*{corollary*}{Corollary}
\theoremstyle{definition}
\newtheorem{definition}[theorem]{Definition}
\newtheorem{example}[theorem]{Example}
\newtheorem{remark}[theorem]{Remark}
\theoremstyle{remark}
\newcommand{\newword}[1]{\emph{\textbf{#1}}}
\newlength\cellsize \setlength\cellsize{10\unitlength}
\newcommand\cellify[1]{\def\thearg{#1}\def\nothing{}%
\ifx\thearg\nothing\vrule width0pt height\cellsize depth0pt%
  \else\hbox to 0pt{\usebox6\hss}\fi%
  \vbox to 13\unitlength{\vss\hbox to 13\unitlength{\hss$#1$\hss}\vss}}
\newcommand\tableau[1]{\vtop{\let\\=\cr
\setlength\baselineskip{-10000pt}
\setlength\lineskiplimit{10000pt}
\setlength\lineskip{0pt}
\halign{&\cellify{##}\cr#1\crcr}}}
\newcommand\Bcellify[1]{\def\thearg{#1}\def\nothing{}%
\ifx\thearg\nothing\vrule width0pt height\cellsize depth0pt%
  \else\hbox to 0pt{\usebox4\hss}\fi%
  \vbox to 20\unitlength{\vss\hbox to 20\unitlength{\hss$#1$\hss}\vss}}
\newcommand\Btableau[1]{\vtop{\let\\=\cr
\setlength\baselineskip{-10000pt}
\setlength\lineskiplimit{10000pt}
\setlength\lineskip{0pt}
\halign{&\Bcellify{##}\cr#1\crcr}}}
\newcommand\scellify[1]{\def\thearg{#1}\def\nothing{}%
\ifx\thearg\nothing\vrule width0pt height\cellsize depth0pt%
  \else\hbox to 0pt{\usebox2\hss}\fi%
  \vbox to 13\unitlength{\vss\hbox to 13\unitlength{\hss$#1$\hss}\vss}}
\newcommand\spintab[1]{\vtop{\let\\=\cr
\setlength\baselineskip{-10000pt}
\setlength\lineskiplimit{10000pt}
\setlength\lineskip{0pt}
\halign{&\scellify{##}\cr#1\crcr}}}
\newcommand\Bscellify[1]{\def\thearg{{#1}}\def\nothing{}%
\ifx\thearg\nothing\vrule width0pt height\cellsize depth0pt%
  \else\hbox to 0pt{\usebox9\hss}\fi%
  \vbox to 20\unitlength{\vss\hbox to 20\unitlength{\hss$#1$\hss}\vss}}
\newcommand\Bspintab[1]{\vtop{\let\\=\cr
\setlength\baselineskip{-10000pt}
\setlength\lineskiplimit{10000pt}
\setlength\lineskip{0pt}
\halign{&\Bscellify{##}\cr#1\crcr}}}
\newcommand{\hackcenter}[1]{
 \xy (0,0)*{#1}; \endxy}
\tikzstyle directed=[postaction={decorate,decoration={markings,
    mark=at position #1 with {\arrow{>}}}}]
\tikzstyle rdirected=[postaction={decorate,decoration={markings,
    mark=at position #1 with {\arrow{<}}}}]
\tikzset{fontscale/.style = {font=\relsize{#1}}
    }
\newcommand{\g}{\mathfrak{g}}
\newcommand{\fb}{\mathfrak{b}}
\newcommand{\Z}{\mathbbm{Z}}
\newcommand{\B}{\mathcal{B}}
\newcommand{\C}{\mathcal{C}}
\newcommand{\cO}{\mathcal{O}}
\newcommand{\E}{\mathcal{E}}
\newcommand{\F}{\mathcal{F}}
\newcommand{\V}{\mathcal{V}}
\newcommand{\D}{\mathbbm{D}}
\newcommand\Hom{\mathrm{Hom}}
\newcommand\wt{\mathrm{wt}}
\newcommand{\vt}{\text{\Aries}}
\newcommand{\aB}{\mathsf{B}}
\newcommand{\aC}{\mathsf{C}}
\newcommand{\KNb}[2]{\mathrm{KN}^B_{#1}{#2}}
\newcommand{\KNc}[2]{\mathrm{KN}^C_{#1}{#2}}
\newcommand{\Split}{\mathrm{split}}
\newcommand{\evac}{\operatorname{evac}}
\newcommand{\rect}{\operatorname{rect}}
 \gdef\Young#1{\hbox{$\vcenter
 {\mathcode`,="8000\mathcode`|="8000
  \def,{\global\advance\cols by 1 &}%
  \def|{\cr
        \multispan{\the\cols}\hrulefill\cr
        &\global\cols=2 }%
  \offinterlineskip\everycr{}\tabskip=0pt
  \dimen0=\ht\strutbox \advance\dimen0 by \dp\strutbox
  \halign
   {\vrule height \ht\strutbox depth \dp\strutbox##
    &&\hbox to \dimen0{\hss$##$\hss}\vrule\cr
    \noalign{\hrule}&\global\cols=2 #1\crcr
    \multispan{\the\cols}\hrulefill\cr%
   }
 }$}}
\gdef\Skew(#1:#2){\hbox{$\vcenter
{\mathcode`,="8000\mathcode`|="8000
  \dimen0=\ht\strutbox \advance\dimen0 by \dp\strutbox
  \def\boxbeg{\vbox
    \bgroup\hrule\kern-0.4pt\hbox to\dimen0\bgroup\strut\vrule\hss$}%
  \def\boxend{$\hss\egroup\hrule\egroup}%
  \def,{\boxend\boxbeg}%
  \def|##1:{\boxend\vrule\egroup\nointerlineskip\kern-0.4pt
    \moveright##1\dimen0\hbox\bgroup\boxbeg}%
  \def\\##1\\##2:{\boxend\vrule\egroup\nointerlineskip\kern-0.4pt
    \kern ##1\dimen0\moveright##2\dimen0\hbox\bgroup\boxbeg}%
  \moveright#1\dimen0\hbox\bgroup\boxbeg#2\boxend\vrule\egroup
 }$}}
    \newcommand{\oneonetwo}{
        \spintab{1\\2}\tableau{1}
    }
    \newcommand{\twoonetwo}{
        \spintab{1\\2}\tableau{2}
    }
    \newcommand{\vtwoonetwo}{
        \tableau{1&2&2\\2}
    }
    \newcommand{\oneonebartwo}{
        \spintab{1\\\bar{2}}\tableau{1}
    }
    \newcommand{\voneonebartwo}{
        \tableau{1&1&1\\ \bar{2}}
    }
    \newcommand{\zeroonetwo}{
        \spintab{1\\2}\tableau{0}
    }
    \newcommand{\vzeroonetwo}{
        \tableau{1& 2& \bar{2}\\2}
    }
    \newcommand{\twoonebartwo}{
        \spintab{1\\\bar{2}}\tableau{2}
    }
    \newcommand{\vtwoonebartwo}{
        \tableau{1&2&2\\ \bar{2}}
    }
    \newcommand{\bartwoonetwo}{
        \spintab{1\\2}\tableau{\bar{2}}
    }
    \newcommand{\vbartwoonetwo}{
        \tableau{1&\bar{2}&\bar{2}\\2}
    }
    \newcommand{\twotwobarone}{
        \spintab{2\\\bar{1}}\tableau{2}
    }
    \newcommand{\vtwotwobarone}{
        \tableau{2&2&2\\\bar 1}
    }
    \newcommand{\zeroonebartwo}{
        \spintab{1\\\bar{2}}\tableau{0}
    }
    \newcommand{\vzeroonebartwo}{
        \tableau{1&2&\bar{2}\\\bar{2}}
    }
    \newcommand{\baroneonetwo}{
        \spintab{1\\2}\tableau{\bar{1}}
    }
    \newcommand{\vbaroneonetwo}{
        \tableau{1&\bar{1}&\bar{1}\\2}
    }
    \newcommand{\bartwoonebartwo}{
        \spintab{1\\\bar{2}}\tableau{\bar{2}}
    }
    \newcommand{\vbartwoonebartwo}{
        \tableau{1&\bar{2}&\bar{2}\\\bar{2}}
    }
    \newcommand{\zerotwobarone}{
        \spintab{2\\\bar{1}}\tableau{0}
    }
    \newcommand{\vzerotwobarone}{
        \tableau{2&2&\bar{2}\\\bar{1}}
    }
    \newcommand{\baroneonebartwo}{
        \spintab{1\\\bar{2}}\tableau{\bar{1}}
    }
    \newcommand{\vbaroneonebartwo}{
        \tableau{1&\bar{1}&\bar{1}\\\bar{2}}
    }
    \newcommand{\bartwotwobarone}{
        \spintab{2\\\bar{1}}\tableau{\bar{2}}
    }
    \newcommand{\vbartwotwobarone}{
        \tableau{2&\bar{2}&\bar{2}\\\bar{1}}
    }
    \newcommand{\baronetwobarone}{
        \spintab{2\\\bar{1}}\tableau{\bar{1}}
    }
    \newcommand{\vbaronetwobarone}{
        \tableau{2&\bar{1}&\bar{1}\\\bar{1}}
    }
    \newcommand{\bartwobartwobarone}{
        \spintab{\bar{2}\\\bar{1}}\tableau{\bar{2}}
    }
    \newcommand{\vbartwobartwobarone}{
        \tableau{\bar{2}&\bar{2}&\bar{2}\\\bar{1}}
    }
    \newcommand{\baronebartwobarone}{
        \spintab{\bar{2}\\\bar{1}}\tableau{\bar{1}}
    }
    \newcommand{\vbaronebartwobarone}{
        \tableau{\bar2&\bar{1}&\bar{1}\\\bar{1}}
    }
\title{Keys and evacuation via virtualization 
}
\author[O. Azenhas]{Olga Azenhas}
\address{ University of Coimbra, CMUC, Department of Mathematics, Portugal}
\email{oazenhas@mat.uc.pt}
\author[N. Gonz\'alez]{Nicolle Gonz\'alez}
\address{Department of Mathematics,  University of California Berkeley, CA 94720-3840, USA}
\email{nicolle@math.berkeley.edu}
\author[D. Huang]{Daoji Huang}
\address{Institute for Advanced Study, Princeton, NJ 08540, USA}
\email{dhuang@ias.edu}
\author[J. Torres]{Jacinta Torres}
\address{Institute of Mathematics,  Jagiellonian University in Kraków, Poland}
\email{jacinta.torres@uj.edu.pl}
\begin{document}

\begin{abstract}
In this paper, we study the relation between the key map and virtualization of crystals. Namely, we prove that virtualization between crystals in any two finite Cartan types commutes with the left and right key maps, thus embedding Demazure crystals and atoms correspondingly. In particular, this implies that the key map in any finite Cartan type can be reduced to the key map in a simply-laced type, provided an appropriate virtualization exists, generalizing the work of Azenhas--Santos. As an application, we study these maps in the context of orthogonal Kashiwara--Nakashima tableaux and show that the virtualizations from type B into C considered independently by Fujita and Pappe--Pfannerer--Schilling--Simone coincide with the splitting map of De Concini and Lecouvey. As a consequence, this enables us to give a new and purely combinatorial definition of orthogonal evacuation.
\end{abstract}

\maketitle
\setcounter{secnumdepth}{4}
\setcounter{tocdepth}{1}
\section*{Introduction}
Let \(\mathfrak{g}\) be a finite-dimensional Lie algebra and \(\lambda\) a dominant integral weight. Kashiwara \cite{Kas90} (and independently Lusztig \cite{lusztig94}) showed that, to every irreducible finite-dimensional complex \(\mathfrak{g}\)-representation \(V(\lambda)\) of highest weight \(\lambda\), one can associate the so-called \newword{crystal graph} \(\mathcal{B}(\lambda)\). This graph consists of vertices given by the crystal (or canonical) basis of \(V(\lambda)\) and edges corresponding to certain deformations of the Chevalley generators. These weighted, directed, colored graphs serve as combinatorial skeletons of the representations \(V(\lambda)\), encoding many of their fundamental properties while being computationally easier to work with.

\newword{Demazure modules} are certain Borel modules that generalize the highest weight modules of \(\mathfrak{g}\). Geometrically, Demazure modules correspond to certain spaces of global sections of a suitable line bundle on a Schubert variety embedded in a flag variety \cite{demazure74}. Indeed, it is well-known that the theory of crystals is closely related to standard monomial theory on flag varieties, with a natural correspondence between Schubert varieties and Demazure modules \cite{kumar, littelmann1995paths, litlakmag98, Lakshlittelmann2002LS}. In particular, it was shown by Littelmann and Kashiwara \cite{kashdemazure, littelmann95conj} that, for any \(w\) in the Weyl group \(W\) of \(\mathfrak{g}\) and dominant integral weight \(\lambda\), the Demazure module \(V_{w}(\lambda)\) has an associated \newword{Demazure crystal} \(\B_{w}(\lambda)\), which arises as a particular induced subgraph of \(\B(\lambda)\). Thus, for fixed \(w \in W\), it is natural to inquire whether a given vertex \(b \in \B(\lambda)\) belongs to the Demazure crystal \(\B_{w}(\lambda)\).

The answer to this question is provided by the \newword{key map}. This map associates to each vertex \(b \in \B(\lambda)\) a pair of elements in the \(W\)-orbit of the highest weight element in \(\B(\lambda)\), called the \newword{right} and \newword{left key} of \(b\). The right key indicates the smallest possible Demazure crystal containing the vertex \(b\). Highly related are \newword{Demazure atoms}, which are subsets of \(\B(\lambda)\) that consist of all the elements in \(\B(\lambda)\) whose right key is the same\footnote{There is an analogous dual relation between the left key of \(b\) and the so-called \emph{opposite Demazure crystal} and \emph{opposite Demazure atoms}.}. The study of Demazure crystals and atoms is an very active topic of research; see, for instance, \cite{samarmon,assdrgonz23,agh24, AGL, azenhas2024virtualkeys, aval, brubaker,buciumascrimshaw22, baker00bn,fourier2013demazure, fulas, fujita18,fujita22,lenarthershposet2017,jacon2020keys,kouno20,kundu2024saturation, kus2016twisted, Lakshlittelmann2002LS, lam19, SarahMasonAtoms, pr99, sa21a, sa21b}. Historically, much of the relevance of the key map stems from standard monomial theory, with appearances in essentially every crystal model. Specifically, the right and left keys were designed to encapsulate the Lakshmibai--Seshadri minimal and maximal defining chains in standard monomial theory \cite{LaSchu90keys, reinershimozono97, seshadri2016introduction, seshadri2016, fatemeh21}. In the Littelmann crystal model of Lakshmibai--Seshadri paths for \(\B(\lambda)\) \cite{littelmann1994LS}, the initial and final directions of a path exhibit the right key and left key, respectively. Similarly, within the Lenart--Postnikov alcove path model \cite{lenart2008combinatorial, lenart2007combinatorics}, the right and left key also play a role.

Due to its prevalence in the study of crystals, the effective computation of the key map has captured the interest of many mathematicians over the last few decades. For \(\mathfrak{g} = \mathfrak{sl}(n+1, \mathbb{C})\), there is a classical algorithm based on jeu de taquin for semi-standard Young tableaux due to Lascoux--Sch\"utzenberger \cite[Proposition 5.2]{LaSchu90keys}, with many other algorithms for type \(A_n\) developed in \cite{aval,brubaker,kushwaha,SarahMasonAtoms,willisdirect}. More generally, Jacon--Lecouvey also gave a recursive procedure for any Kac--Moody Lie algebras using combinatorial R-matrices, which generalizes the original procedure of Lascoux--Sch\"utzenberger \cite{jacon2020keys}. A different recursive algorithm was also provided by Hersh--Lenart \cite{lenarthershposet2017} using the edge-colored poset-theoretic structure of the crystal. Despite these advances, direct combinatorial methods for computing right and left keys in general types are not very well studied. In the case of reverse King and Sundaram tableaux of types \(B_{n}\) and \(C_{n}\), there is a procedure due to Buciumas--Scrimshaw \cite{buciumascrimshaw22} which computes the right keys and atoms in terms of certain quasi-solvable lattice models. Additionally, not long ago, two combinatorial methods on Kashiwara--Nakashima tableaux of type \(C_{n}\) were constructed by Santos \cite{sa21a,sa21b}.

Of interest to us, a more recent third method of computing right and left keys in type \(C_n\) was developed by Azenhas--Santos \cite{azenhas2024virtualkeys} using a process called \newword{virtualization}. This approach allowed them to translate the computations into classical and well-understood type \(A_n\) combinatorics. Abstractly, virtualization is a method introduced by Kashiwara in \cite{Kashsimilar96}, whereby a highest weight \(\mathfrak{g}\)-crystal is embedded inside some highest weight \(\tilde{\mathfrak{g}}\)-crystal, provided the Dynkin diagram of \(\mathfrak{g}\) can be obtained from the Dynkin diagram of \(\tilde{\mathfrak{g}}\) via a diagram folding. The image of such an embedding, equipped with an induced crystal structure, is termed a \newword{virtual crystal}. Virtualization maps have been constructed in various models for most finite Dynkin types corresponding to various diagram foldings \cite{baker2000zero, schiscrivirtual15, pappe2023promotion, fujita18, pan2018virtualization, nakajimamonomial}. The main goal of this paper is a generalization of the methods developed by Azenhas--Santos \cite{azenhas2024virtualkeys} to any finite Cartan type. Since virtualization enables any finite type to be transformed into some simply-laced type (see Figure \ref{fig:gamma}), our results imply that calculating the key map for any finite type can always be simplified to a calculation within a simply-laced type where the combinatorial methods are better developed.

\subsection*{Dilation and Virtualization} A distinguished family of virtualizations from any Cartan type to itself are Kashiwara's \newword{crystal dilations} \cite{Kashsimilar96}. These maps are given by the natural embeddings \(\Theta_m:\B(\lambda) \hookrightarrow \B(m\lambda) \subset \B(\lambda)^{\otimes m}\) for any positive integer \(m\). Crystal dilations are central to the computation of the right and left keys since, for \(m\) large enough, iterated applications of \(\Theta_m\) to any given \(b \in \B(\lambda)\) decompose the image of \(b\) as a tensor product of extremal weight vectors. Although Kashiwara proved \cite[Prop. 8.3.2]{kashiwara002cours} that, for any \(b \in \B(\lambda)\), there exists an \(m\) such that \(\Theta_m(b) = b_{w\lambda} \otimes b' \otimes b_{w'\lambda}\) with \(w \geq w'\) in strong Bruhat order and \(b_{w\lambda}\) and \(b_{w'\lambda}\) extremal vectors, his proof does not offer tight bounds on how large a fixed \(m\) must be in order for the result to hold for every \(b \in \B(\lambda)\) simultaneously. In Theorem \ref{thm:extremal endpoints}, we enhance Kashiwara's findings and prove the following:

\begin{theorem*}[\ref{thm:extremal endpoints}]
Let $m \in \mathbb{N}$. 
For all $b \in \B(\lambda)$, there exists $b' \in \B(\lambda)^{\otimes (m-2)}$ and fixed $w\geq w' \in W$ such that 
\[\Theta_m(b) = b_{w\lambda}\otimes b' \otimes b_{w'\lambda} \quad \text{ if and only if } \quad m \geq \ell\]
where $\ell=\max \{ \text{length}(\rho) \; | \; \rho \text{ is an i-string in } \B(\lambda)\text{ for some } i \in I\}.$
\end{theorem*}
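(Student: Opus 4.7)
The plan is to prove both implications by induction on depth in the crystal, exploiting the dilation property $\Theta_m(\tilde f_i b)=\tilde f_i^m\Theta_m(b)$ together with a careful application of the tensor product signature rule.

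\textbf{Sufficiency ($m\geq\ell$).} I proceed by induction on the length of a shortest sequence $\tilde f_{i_1}\cdots\tilde f_{i_k}$ of Kashiwara operators taking $b_\lambda$ to $b$. The base case $b=b_\lambda$ is immediate since $\Theta_m(b_\lambda)=b_\lambda^{\otimes m}$, whose endpoints are both $b_{e\lambda}$. For the inductive step, write $b=\tilde f_i b'$ and assume $\Theta_m(b')=b_{u\lambda}\otimes c\otimes b_{v\lambda}$ with $u\geq v$ in Bruhat order. Applying $\tilde f_i^m$, the key observation is that any extremal vertex $b_{w\lambda}$ satisfies $\varepsilon_i(b_{w\lambda})\varphi_i(b_{w\lambda})=0$, so its $i$-signature contribution consists either entirely of unpaired $-$'s or entirely of $+$'s. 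The signature rule then yields a dichotomy for the leftmost factor: it either is skipped by $\tilde f_i^m$ entirely (when $\varphi_i(b_{u\lambda})=0$) or gets fully lowered along its $i$-string to $b_{s_iu\lambda}$ (when $\varphi_i(b_{u\lambda})>0$). The hypothesis $m\geq\ell$ is exactly what guarantees a sufficient supply of $\tilde f_i$-operators to traverse any $i$-string, excluding the possibility that the first factor is left stranded in the interior. A symmetric analysis handles the rightmost factor. The Bruhat condition $w\geq w'$ is preserved because the update $u\mapsto s_iu$ at an extremal endpoint respects Bruhat order.

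\textbf{Necessity ($m<\ell$).} I construct an explicit counterexample by locating an $i$-string $\rho$ of length $\ell$ in $\B(\lambda)$ and an element whose image has an endpoint in the interior of $\rho$. Let $b_{\mathrm{top}}$ denote the top of $\rho$ and choose $b_0\in\B(\lambda)$ whose image $\Theta_m(b_0)$ has $b_{\mathrm{top}}$ as leftmost factor (existence of such $b_0$ follows from the sufficiency argument applied to a judiciously chosen starting vertex). For $1\leq j<\ell$, the element $b=\tilde f_i^jb_0$ then satisfies $\Theta_m(b)=\tilde f_i^{jm}\Theta_m(b_0)$. By the signature rule, successive $\tilde f_i$'s applied at the leftmost position advance this factor one step at a time along $\rho$; after $j<\ell$ steps the leftmost factor becomes $\tilde f_i^jb_{\mathrm{top}}$, strictly interior to $\rho$ and hence not extremal.

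\textbf{Main obstacle.} The crux of the argument is justifying the ``skip-or-traverse'' dichotomy in the sufficiency direction: one must verify that the middle factor $c$ inherited from the inductive hypothesis cannot contribute an intermediate amount of $\varepsilon_i$-mass that would strand the leftmost factor partway through its $i$-string. The bound $m\geq\ell$ is sharp in precisely this sense, since $\ell$ is the maximal length of an $i$-string along which the leftmost factor could otherwise fail to be completely traversed.
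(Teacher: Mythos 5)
Your plan mirrors the paper's strategy closely: a signature-rule analysis of $\tilde f_i^m$ acting on the tensor decomposition of $\Theta_m$, with necessity shown by stranding the second element of a maximal $i$-string whose top is extremal. The necessity direction is essentially correct and matches the paper's.

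The sufficiency direction, however, has a genuine gap that you flag but do not close, and your explanation of why $m\ge\ell$ resolves it misidentifies the mechanism. The danger is not a shortage of operators. Suppose in your inductive step $\varphi_i(b_{u\lambda})=p>0$ (so $\varepsilon_i(b_{u\lambda})=0$) and $a:=\varepsilon_i(c\otimes b_{v\lambda})$ satisfies $0<a<p$. Then after bracketing, only $p-a$ of the leftmost factor's $+$'s are unpaired; $\tilde f_i^m$ lowers the leftmost factor exactly $p-a$ steps to $f_i^{p-a}(b_{u\lambda})$ — strictly interior to its $i$-string — and then its remaining $+$'s are all bracketed against the middle factor's $-$'s, so no further $\tilde f_i$'s touch it, no matter how large $m$ is. The ``skip-or-traverse dichotomy'' therefore requires the structural claim that $a\in\{0\}\cup[p,\infty)$ whenever $b_{u\lambda}\otimes c\otimes b_{v\lambda}$ lies in the image of $\Theta_m$; this is precisely the ``main obstacle'' you name, and $m\ge\ell$ alone does not imply it. The paper's proof sidesteps the worst of this by not inducting one crystal operator at a time from $b_\lambda$: it instead writes $b=f_{i_1}^{n_1}\cdots f_{i_s}^{n_s}(b_0')$ with $b_0'\in\cO(\lambda)$ extremal and works downward from $b_0'^{\otimes m}$, where all $m$ factors carry identical signatures (all $+$'s or all $-$'s), so at least the first stage of the iteration traverses cleanly. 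If you want to keep your architecture, you must supply the missing structural lemma about $\varepsilon_i$ of the tail; alternatively, the claim falls out cleanly in the Lakshmibai--Seshadri path model (which the paper references, cf.\ Remark \ref{re:schutzleftrightkeys}), since the leftmost factor of $\Theta_m(b)$ is $b_{\tau_0\lambda}$ as soon as $1/m\le a_1$, and the LS integrality condition $a_1\langle\tau_1\lambda,\beta^\vee\rangle\in\Z_{>0}$ together with $\langle\tau_1\lambda,\beta^\vee\rangle\le\ell$ gives $a_1\ge 1/\ell$ directly.
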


By definition, the right and left key are precisely the extremal weight vectors \(b_{w\lambda}\) and \(b_{w'\lambda}\) in the decomposition of \(\Theta_m(b)\). Hence, Theorem \ref{thm:extremal endpoints} provides a tight bound for choosing \(m\), which uniformly works in computing the right and left key of every vertex in \(\B(\lambda)\).

This method of calculating the keys of a given element, however, is not very efficient. Thus, we turn to virtualization as a method of translating the computation of keys in a given type onto another where direct computation may be more effective. Our main result, Theorem \ref{thm:keys-virt-commute}, provides a complete solution to this problem in every finite Dynkin type by fully generalizing the method studied by Azenhas--Santos \cite{azenhas2024virtualkeys}.

\begin{theorem*}[\ref{thm:keys-virt-commute}] 
Let $\psi:X \to Y$ be a Dynkin diagram embedding with virtualization map $\vt:\B(\lambda) \to \tilde{\B}(\psi(\lambda))$. Then, for any $b \in \B(\lambda)$: 
\[\vt(K^+(b))=K^+(\vt(b)) \text{ and } \vt(K^-(b))=K^-(\vt(b)).\] 
Thus, virtualization embeds Demazure crystals and atoms correspondingly, so that for any $w \in W^X$ we have
\[ \B_w(\lambda) \overset{\vt}{\hookrightarrow} \tilde{\B}_{\psi(w)}(\psi(\lambda)) \qquad \text{ and } \qquad \mathring {\B}_{w}(\lambda)\overset{\vt}{\hookrightarrow} \mathring{\tilde{\B}}_{\psi(w)}(\psi(\lambda)).\]
\end{theorem*}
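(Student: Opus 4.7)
The plan is to deduce both statements from Theorem \ref{thm:extremal endpoints} by leveraging three compatibilities of the virtualization maps associated with the fixed diagram embedding $\psi$: (i) $\vt$ commutes with tensor products, so on the embedding $\B(m\lambda) \hookrightarrow \B(\lambda)^{\otimes m}$ it acts factor-wise; (ii) $\vt$ sends extremal vectors to extremal vectors, namely $\vt(b_{w\lambda}) = \tilde{b}_{\psi(w)\psi(\lambda)}$; and (iii) $\vt$ commutes with crystal dilation, namely $\vt \circ \Theta_m = \tilde{\Theta}_m \circ \vt$ as maps $\B(\lambda) \to \tilde{\B}(m\psi(\lambda))$. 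Property (i) is standard for virtualizations arising from foldings. Property (iii) follows because $\Theta_m$ is itself a virtualization (with uniform scaling $m$), so both compositions are strict crystal morphisms $\B(\lambda) \to \tilde{\B}(m\psi(\lambda))$ determined by their image of $b_\lambda$ and hence coincide. Property (ii) I would establish by induction on $\ell(w)$: when $\ell(s_i w) > \ell(w)$, the vector $b_{s_i w \lambda}$ is obtained from $b_{w\lambda}$ by Kashiwara's $S_i$, and under virtualization $S_i$ translates into the product of commuting $\tilde{S}_j$ over the orbit $\psi(i)$, which assembled along a reduced word for $w$ yields $\tilde{S}_{\psi(w)}$.

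With these in hand, I would choose $m$ large enough that Theorem \ref{thm:extremal endpoints} applies to both $\B(\lambda)$ and $\tilde{\B}(\psi(\lambda))$ simultaneously. The theorem then gives a decomposition $\Theta_m(b) = b_{w\lambda} \otimes b'' \otimes b_{w'\lambda}$ with $w \geq w'$ in Bruhat order of $W^X$, and identifies $K^+(b) = b_{w\lambda}$, $K^-(b) = b_{w'\lambda}$. Applying $\vt$ and using (i)--(iii) yields
\[ \tilde{\Theta}_m(\vt(b)) = \tilde{b}_{\psi(w)\psi(\lambda)} \otimes \vt(b'') \otimes \tilde{b}_{\psi(w')\psi(\lambda)}. \]
Once $\psi(w) \geq \psi(w')$ in $\tilde{W}^Y$ is in hand, the converse direction of Theorem \ref{thm:extremal endpoints} applied to $\tilde{\B}(\psi(\lambda))$ reads off the extremal factors as $K^\pm(\vt(b))$, giving the desired identity $\vt(K^\pm(b)) = K^\pm(\vt(b))$.

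The embedding of Demazure crystals and atoms follows immediately from the definitions: $b \in \B_w(\lambda)$ iff $K^+(b) = b_{u\lambda}$ for some $u \leq w$, so by the first part $K^+(\vt(b)) = \tilde{b}_{\psi(u)\psi(\lambda)}$ with $\psi(u) \leq \psi(w)$, placing $\vt(b) \in \tilde{\B}_{\psi(w)}(\psi(\lambda))$; the atom case is identical using the equality $K^+(b) = b_{w\lambda}$. I expect the main technical obstacle to be property (ii), which requires a careful translation of Kashiwara's $S_i$ action under folding, together with verifying that the assembled product of commuting $\tilde{S}_j$ along a reduced word for $w$ actually realizes $\tilde{S}_{\psi(w)}$ (equivalently, that the induced Weyl group homomorphism $\psi : W^X \to \tilde{W}^Y$ is length-additive on reduced expressions, from which Bruhat preservation is automatic).
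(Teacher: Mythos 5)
Your proposal is correct and follows essentially the same route as the paper's proof: the paper likewise reduces the statement to the commutation $\Theta_m \circ \vt = \vt^{\otimes m} \circ \Theta_m$ (their Proposition \ref{thm:virt-dilation-commute}, your property (iii)) together with the fact that $\vt$ maps the $W^X$-orbit onto the $\widetilde{W}^X$-orbit (their Lemma \ref{lem:orbit-virt}, your property (ii)), and then reads the keys off the two ends of the dilated tensor product. The only cosmetic difference is that the paper uses the full decomposition $\Theta_m(b) = b_{w_1\lambda} \otimes \cdots \otimes b_{w_m\lambda} \in \cO(\lambda)^{\otimes m}$ with a decreasing Bruhat chain, whereas you invoke only the boundary decomposition $b_{w\lambda} \otimes b'' \otimes b_{w'\lambda}$ from Theorem \ref{thm:extremal endpoints}; both suffice since Definition \ref{DefKeys} only requires extremal endpoints.
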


\subsection*{Evacuation and Applications to Type B} The Lusztig--Sch\"utzenberger involution is defined as the unique set involution \(\xi_{\B(\lambda)}: \B(\lambda) \rightarrow \B(\lambda)\), induced by the automorphism of the Dynkin diagram of \(\mathfrak{g}\) given by left multiplication by the longest element in \(W\). The process of combinatorially producing the image \(\xi(b)\) for a given \(b \in \B(\lambda)\) directly, without needing to compute the entire crystal, is known as \newword{evacuation}.  
Historically, evacuation in type \(A_n\) was defined by Sch\"utzenberger for semistandard Young tableaux \cite{Schutzenberger1963} and later proven by Berenstein--Zelevinsky \cite{berenstein1996canonical} to coincide with the Lusztig involution on canonical bases. A similar algorithm using symplectic jeu de taquin was recently developed by Santos for type \(C_n\) \cite{sa21a}. Evacuation has also been considered in non-tableaux models, such as the alcove path model \cite{lenart2007combinatorics} or the Lakshmibai--Seshadri path model \cite{littelmann1994LS}. In particular, in each of these models, it was shown that evacuation exchanges right and left keys.

In Theorem \ref{thm:Keys-Schutz-commute}, we provide a new elementary and model-independent proof that the Lusztig--Sch\"utzenberger involution exchanges the right and left keys. Namely, we first prove that the Lusztig--Sch\"utzenberger involution splits across tensor factors in multiplicity-one isotypic components (see Proposition \ref{prop:commutor-trivial}). From here, using the fact that dilation maps \(\B(\lambda)\) onto a component of \(\B(\lambda)^{\otimes m}\) of multiplicity one, the result follows. Since the Lusztig--Sch\"utzenberger involution also commutes with any virtualization \cite[Thm. 4]{torres2024virtual}, our results enable us to give a combinatorial formulation for orthogonal evacuation in the context of Kashiwara--Nakashima tableaux, as we now explain.

In \cite{fujita18}, Fujita defined a virtualization map \(\vt_{BC}\) from type \(B_n\) into \(C_n\) in their study of Newton--Okounkov polytopes. In a different context and independently, in \cite{pappe2023promotion}, Pappe--Pfannerer--Schilling--Simone defined what turned out to be the same virtualization from \(B_n\) into \(C_n\). In Theorem \ref{thm:virt-column} and Corollary \ref{cor:virt-nospin}, we show that the virtualization considered by Fujita and Pappe et al. on orthogonal Kashiwara--Nakashima tableaux coincides with a combinatorial operation studied by Lecouvey in \cite{lecouvey2002schensted, lecouvey2003schensted, lecouvey2007combinatorics}, known as \newword{splitting}. That is, we prove:
\begin{theorem*}[\ref{cor:virt-nospin}]
    For any orthogonal Kashiwara--Nakashima tableau $T$, we have $\vt_{BC}(T)= \Split(T)$.
    \end{theorem*}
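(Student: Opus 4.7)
The plan is to reduce Corollary~\ref{cor:virt-nospin} to the single-column case (established by Theorem~\ref{thm:virt-column}) via the tensor-product compatibility of both maps $\vt_{BC}$ and $\Split$. Since the label ``nospin'' indicates that $T$ has no spin columns, all columns of $T$ are honest KN columns of heights $h_i \le n-1$ and fit inside fundamental crystals $\B(\omega_{h_i})$.

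First, I would realize $T$ as a tensor product of its columns. Under the standard strict crystal embedding
\[ \B(\lambda) \hookrightarrow \B(\omega_{h_1}) \otimes \cdots \otimes \B(\omega_{h_r}) \]
given by the column reading, $T$ is sent to $C_1 \otimes \cdots \otimes C_r$. Since $\vt_{BC}$ is a strict virtualization (in particular, a morphism of crystals compatible with tensor products when applied componentwise to each factor in the folded setting), one has
\[ \vt_{BC}(T) = \vt_{BC}(C_1) \otimes \cdots \otimes \vt_{BC}(C_r), \]
inside the corresponding tensor product of type $C_n$ column crystals.

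Second, I would unpack Lecouvey's splitting along the same factorization. By definition, $\Split(T)$ is constructed column-locally: each column $C_i$ is replaced by its split $\Split(C_i) = C_i^{L} C_i^{R}$, and Lecouvey proves that the resulting concatenation is a valid symplectic KN tableau in $\tilde{\B}(\psi(\lambda))$. On the level of reading words this gives precisely $\Split(C_1) \otimes \cdots \otimes \Split(C_r)$ inside the same ambient tensor product. Applying Theorem~\ref{thm:virt-column} column by column yields $\vt_{BC}(C_i) = \Split(C_i)$, and combining with the two previous factorizations produces $\vt_{BC}(T) = \Split(T)$.

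The main obstacle is the compatibility step: one must verify that the concatenation $\Split(C_1)\cdots\Split(C_r)$ really represents, as an element of the ambient $C_n$ tensor product, the single symplectic KN tableau $\Split(T)$ coming from Lecouvey's global definition. This amounts to showing that both $\vt_{BC}(T)$ and $\Split(T)$ sit inside the same highest-weight component $\tilde{\B}(\psi(\lambda))$ of the tensor product, and coincide there. This uses the strictness of the column embedding, the fact that $\vt_{BC}$ preserves highest-weight elements (so it maps the connected component of $T$ to the connected component of $\vt_{BC}(T)$), and Lecouvey's admissibility theorem for pairs of adjacent split columns. Once this identification is in place, the column case from Theorem~\ref{thm:virt-column} closes the argument.
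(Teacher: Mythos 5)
Your approach is in the same spirit as the paper's: reduce to the single-column case established in Theorem~\ref{thm:virt-column}. However, there are two issues worth flagging.

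First, you misread the label. The corollary is stated for $T \in \KNb{n}{(\mu_0|\mu)}$ where $\mu_0$ can be the nonempty spin column $(1^n)$; the paper's own proof explicitly writes $\Split(T) = \Split(T_0)\Split(T_1)\cdots\Split(T_{\mu_1})$ ``with $T_0$ the (potentially empty) spin column.'' Theorem~\ref{thm:virt-column} already covers the spin column case $\lambda=(1^n|\emptyset)$, so nothing prevents $T$ from having a spin column. Your restriction to columns of height $\le n-1$ is unwarranted and makes the statement you prove strictly weaker than the one claimed.

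Second, you correctly locate the nontrivial step --- that the column-wise tensor $\vt_{BC}(C_1)\otimes\cdots\otimes\vt_{BC}(C_r)$ corresponds, under the column-reading embedding, to the single tableau $\Split(T)$ --- but you only sketch a closure, invoking uniqueness of virtualizations and preservation of highest-weight components. That sketch is in principle workable (it follows the same logic as Proposition~\ref{thm:virt-dilation-commute}), but the paper closes the gap more concretely by descending to reading words. It observes that $w(\Split(T))$ and $\vt_{BC}(w(T))$ agree up to $C_n$-plactic congruence, and then applies the $C_n$-insertion $[\emptyset \xleftarrow{C}\,\cdot\,]$: since $\Split(T)$ is already a straight-shape symplectic KN tableau, inserting its reading word returns $\Split(T)$, while inserting $\vt_{BC}(w(T))$ returns $\vt_{BC}(T)$ by the very definition of $\vt_{BC}$ on tableaux used in the proof of Theorem~\ref{thm:virt-column}. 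This insertion step is what resolves, in a single move, exactly the ``concatenation versus tableau'' subtlety you identify but leave open. Your outline would be complete if you supplied the uniqueness argument explicitly, but as written it is a sketch with a dangling citation to ``Lecouvey's admissibility theorem'' rather than a finished argument.
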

Consequently, we obtain that the virtualization $\vt_{BC}$ intertwines with rectification in types $B$ and $C$.
\begin{corollary*}[\ref{cor:virt-rect-commute}] For any orthogonal skew Kashiwara--Nakashima tableau $T$, we have
\[
\vt_{BC} \circ \rect_B(T) = \rect_C \circ \vt_{BC}(T).
    \]
\end{corollary*}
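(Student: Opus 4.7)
The plan is to combine the identification $\vt_{BC} = \Split$ from Corollary \ref{cor:virt-nospin} with the general principle that rectification is an intrinsically crystal-theoretic operation, and therefore commutes with morphisms of crystals.

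First, I would extend the identification $\vt_{BC}(T) = \Split(T)$ from straight-shape to skew orthogonal Kashiwara--Nakashima tableaux. Since both operations act column by column, and virtualization is compatible with tensor products (the virtualization of a tensor product of column crystals is the tensor product of virtualizations of the columns), the extension is natural and reduces to applying Corollary \ref{cor:virt-nospin} to each column of $T$. Once this is established, the desired identity becomes
\[
\Split(\rect_B(T)) = \rect_C(\Split(T)).
\]

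Second, I would invoke the crystal-theoretic characterization of rectification. Writing a skew tableau $T$ as the tensor product of its columns, $\rect_X(T)$ (for $X \in \{B,C\}$) is the unique straight-shape tableau in the same connected component of the tensor product crystal as $T$, obtained by iterated application of the crystal commutor, which realizes the elementary jeu de taquin slide between adjacent tensor factors. In particular, rectification commutes with any morphism of (virtual) crystals. Since $\vt_{BC}$ is a virtualization, it intertwines the type $B$ crystal operators $\tilde{e}_i^B, \tilde{f}_i^B$ with the folded type $C$ operators on the virtual crystal (with appropriate powers at the short root), and hence commutes with the crystal commutor. Combining these observations yields the equality $\Split \circ \rect_B = \rect_C \circ \Split$, establishing the corollary.

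The main obstacle is to make precise the compatibility between the crystal commutors in types $B$ and $C$ under splitting. Concretely, one must verify that an elementary jeu de taquin slide on a pair of adjacent columns in a type $B$ skew tableau corresponds, after splitting each column into a pair of type $C$ columns, to the appropriate sequence of elementary slides in type $C$. While this is a direct consequence of the folding $\tilde{e}_i^B \leftrightarrow \tilde{e}_{\psi(i)}^C$ together with the column-by-column definition of $\Split$, careful bookkeeping is needed to ensure that the virtual tensor product structure on the image of $\vt_{BC}$ is preserved throughout the rectification process.
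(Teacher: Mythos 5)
Your route is genuinely different from the paper's. The paper avoids the crystal commutor entirely: it works at the level of reading words, invoking the fact (Remark~\ref{re:tensorinsertionscheme}, citing Baker and Lecouvey) that $\rect_B$ and $\rect_C$ agree with $B_n$- and $C_n$-insertion of the reading word, together with the observation that $\vt_{BC}$ acts letter-by-letter on words so that $\vt_{BC}(w(T)) = w(\Split(T))$; the commuting square is then immediate from Theorem~\ref{thm:virt-column} and Corollary~\ref{cor:virt-nospin}.

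There is a gap in the central step of your argument: the assertion that $\rect_X(T)$ is ``obtained by iterated application of the crystal commutor, which realizes the elementary jeu de taquin slide between adjacent tensor factors'' is not correct as stated. The Henriques--Kamnitzer commutor $\sigma_{\B,\C}\colon\B\otimes\C\to\C\otimes\B$ permutes tensor factors, whereas an elementary SJDT slide (Section~\ref{sjdt}) modifies two adjacent columns in place inside a fixed skew shape via the co-admissibility map $\Phi$ and column repair; these are distinct operations, and no identity ``slide $=$ commutor'' holds in types $B_n$ or $C_n$. Your argument can be repaired without the commutor: $\rect_X$ is the \emph{unique} $\mathfrak{g}_X$-crystal isomorphism from the connected component of the skew crystal containing $T$ onto the corresponding straight-shape crystal, so $\vt_{BC}\circ\rect_B$ and $\rect_C\circ\vt_{BC}$ are both virtualizations of the same source into the same target $\C(2\nu)$, and the uniqueness of virtualizations (Remark~\ref{rem:virtual hw}) forces them to coincide. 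Even that cleaned-up version, however, still requires the fact, which you defer as ``careful bookkeeping,'' that $\vt_{BC}$ carries a skew orthogonal KN tableau to a valid skew symplectic KN tableau of doubled column widths; that is exactly what the paper handles concretely through the word-level identity $\vt_{BC}(w(T)) = w(\Split(T))$ and insertion.
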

Lastly, combining Theorems \ref{thm:Keys-Schutz-commute} and Corollary \ref{cor:virt-nospin}, we obtain that orthogonal evacuation $\evac_B$ is given by the composition
\begin{equation}\label{eq:evacBintro}
\evac_B:= (\Split)^{-1} \evac_C \Split,
\end{equation}
where $\evac_C$ is the symplectic evacuation procedure on type $C_n$ Kashiwara--Nakashima tableaux defined by Santos in \cite{sa21b}. In particular, \eqref{eq:evacBintro} provides the first combinatorial algorithm for directly computing the Lusztig--Sch\"utzenberger involution in type $B_n$ on any given vertex. 

Finally, in Proposition \ref{prop:key} we give a complete combinatorial characterization of when an orthogonal Kashiwara--Nakashima tableau is an extremal weight vector, and thus a key.

\subsection*{Structure of the Paper:}
In Section \ref{sec:crystals} we recall the necessary background on normal crystals, their tensor products, the Lusztig--Sch\"utzenberger involution, and virtualization. Section \ref{sec:demazure} contains the majority of the theoretical results. In it, we discuss Demazure crystal and atoms, begin the study of dilation and its properties, and prove Theorem \ref{thm:extremal endpoints}. After introducing the right and left key maps, we prove Theorem  \ref{thm:Keys-Schutz-commute}. We then explore how virtualization interacts with dilation and extremal vectors, from which we deduce the main result, Theorem \ref{thm:keys-virt-commute}. We then move towards computational applications in Section \ref{applicationsb}. Here we recall the crystal structure on orthogonal and symplectic Kashiwara--Nakashima tableaux, symplectic jeu de taquin, and then use this to prove Theorem \ref{thm:virt-column} and Corollaries \ref{cor:virt-nospin} and \ref{cor:virt-rect-commute}. We culminate in Section \ref{sec:examples} with various detailed examples displaying how the primary theorems in the paper can be used to compute right and left keys for orthogonal Kashiwara--Nakashima tableaux.

\section*{Acknowledgements}
The authors thank the Banff International Research Station for their hospitality, as well as the organizers of the workshop ``Community in Algebraic Combinatorics", where this project was born. We also thank Cédric Lecouvey and Anne Schilling for discussions, and  Travis Scrimshaw for helpful comments.
O. A. was partially supported by the Centre for Mathematics of the University of Coimbra (funded by the Portuguese Government through FCT/MCTES, DOI 10.54499/UIDB/00324/2020).
J.T. was supported by the grant SONATA NCN UMO-2021/43/D/ST1/02290 and partially supported by the grant MAESTRO NCN UMO-2019/34/A/ST1/00263.

\section{Crystal Graphs and the Lusztig--Sch\"utzenberger Involution}
\label{sec:crystals}

\subsection{Normal Crystals}\label{cartandata}
We review Kashiwara's theory of $\g$-crystals but refer the reader to \cite{Kas90,Kas91,banffcrystal, kashiwara002cours,humphreys97,bjorner2005combinatorics} for details. 
\\

Let $\g$ be a finite complex semisimple Lie algebra with integral weight lattice $P$,
simple roots $\alpha_i \in P$ with $i \in I$, simple coroots $\alpha_i^\vee \in P^\vee=\Hom_{\Z}(P, \Z)$, and canonical pairing $\langle \cdot, \cdot \rangle: P^\vee \times P \to \Z$. The fundamental weights $\omega_{i}$ and coweights $\omega^{\vee}_{i}$ are defined via $\langle \alpha^{\vee}_{i}, \omega_{j}\rangle =\langle \omega^{\vee}_{j}, \alpha_{i}\rangle = \delta_{ij}$. We denote the corresponding Weyl group as $W$; it is generated as a Coxeter group by  simple reflections  $s_{i}, i \in I$ and relations defined by the associated Dynkin diagram. For $u,v \in W$, we say that $u \leq v$ in the \textit{strong Bruhat order} if, for every reduced expression for $v$, there exists a subexpression that is a reduced expression for $u$. 

\begin{definition}A (normal) \newword{$\g$-crystal}  is a nonempty finite set $\B$  with maps:
\[
\wt: \B \to P, \qquad \varepsilon_i, \varphi_i: \B \to \Z, \qquad e_i, f_i: \B \to \B \sqcup \{ 0\},
\]
where $0\notin \B$ is an auxiliary symbol, subject to the following conditions for all $i \in I$ and $b, b' \in \B$ :
\begin{itemize}
    \item[(C1)] $\varphi_i(b)- \varepsilon_i(b) = \langle \alpha_i^\vee, \wt(b) \rangle$;
    \item[(C2)]  $ \wt(e_i(b)) = \wt(b) + \alpha_i$ if $e_i(b) \in \B$ and  $ \wt(f_i(b)) = \wt(b) - \alpha_i$ if $f_i(b) \in \B$;
    \item[(C3)] $b' = e_i(b)$ if and only if $b = f_i(b')$;
    \item[(C4)]  $\varepsilon_i(b) =$ max$\{ k \geq 0 \vert e_i^k(b) \in \B\}$ and $\varphi_i(b) =$ max$\{ k \geq 0 \vert f_i^k(b) \in \B\}$.
    \end{itemize}
We call the maps $\varepsilon_i$ and $\varphi_i$ the \emph{string operators}, $\wt$ the \emph{weight map}, and $e_i$ and $f_i$ the \emph{crystal operators}.
\end{definition}

For any $i \in I$, an \emph{$i$-string} of length $k$ is any subset of the form $\{f_i^n(b)\neq 0\;|\; n\geq 0\} \subset \B$ for some $b\in\B$ satisfying $e_i(b)=0$ and $\varphi_i(b)=k$.

\begin{definition}
Given any $\B, \C \in \g$-crystals, a map $\phi: \B \to \C \cup \{ 0\}$ is a \newword{crystal morphism} if for all $b \in \B$ and $\phi(b) \in \C$ and any $i \in I$ the following conditions hold:
\begin{itemize}
\item[$(a)$] $\wt(b) = \wt(\phi(b))$,
\item[$(b)$] $\varepsilon_i(b) =\varepsilon_i(\phi(b))$
\item[$(c)$] $\varphi_i(b) = \varphi_i(\phi(b)),$ 
\item[$(d)$] if $e_i(b) \in \B$ and $\phi(e_i(b)) \in \C$ then $e_i(\phi(b))= \phi(e_i(b))$, and
\item[$(e)$] if $f_i(b) \in \B$ and $\phi(f_i(b)) \in \C$ then $f_i(\phi(b))= \phi(f_i(b))$.
\end{itemize}
Moreover, we say $\phi$ is an \emph{isomomorphism} (resp. epimorphism, monomorphism) if the underlying set map $\phi:\B \to \C$ is a bijection (resp. injective, surjective). 
\end{definition}

The category of $\g$-crystals is endowed with a monoidal structure that is compatible with the tensor product structure of $U_q(\g)$. Thus, given $\B_1, \B_2 \in \g$-crystals the \emph{tensor product} $\B_1 \otimes \B_2$ is defined as the set
 \[\B_1 \otimes \B_2 = \{ b_1 \otimes b_2 \mid b_1\in \B_1 \mbox{ and } b_2\in \B_2 \}\]
with $\wt$ and string operators given by
\begin{align}\label{tensorproduct1}
  \wt(b_1 \otimes b_2) & = \wt(b_1) + \wt(b_2), \\
  \varepsilon_i(b_1 \otimes b_2) & = \max( \varepsilon_i(b_1), \varepsilon_i(b_2) - \wt_i(b_1) ),\nonumber\\
  \varphi_i(b_1 \otimes b_2) & = \max( \varphi_i(b_2), \varphi_i(b_1) + \wt_i(b_2) ),\nonumber
\end{align}
where $\wt_i(b) = \langle \alpha^{\vee}_i , \wt(b) \rangle$, and with crystal operators defined by \footnote{  This article follows the Kashiwara convention for crystal tensor products which differs from the Bump-Schilling convention \cite{bump2017crystal} by exchanging the order of the factors.} 
\begin{align}\label{tensorproduct2}
  e_i(b_1 \otimes b_2) &=
  \begin{cases}
    e_i(b_1) \otimes b_2 & \mbox{if } \varphi_i(b_1) \geq \varepsilon_i(b_2) , \\
    b_1 \otimes e_i(b_2) & \mbox{if } \varphi_i(b_1)< \varepsilon_i(b_2)  ;
  \end{cases}\\\label{tensorproduct2a}
  f_i(b_1 \otimes b_2) &= 
  \begin{cases}
    f_i(b_1) \otimes b_2 & \mbox{if } \varphi_i(b_1)>\varepsilon_i(b_2), \\
    b_1 \otimes f_i(b_2) & \mbox{if } \varphi_i(b_1) \le \varepsilon_i(b_2)  .
  \end{cases}
  \end{align}

\begin{definition}\label{def:monoid}
Let $\E$ and $\F$ be the monoids generated by $\{e_i\}_{i\in I}$ and $\{f_i\}_{i \in I}$, respectively. We say that an element $b \in \B$ is a \newword{highest weight vector} (resp. \newword{lowest weight vector}) if $\E \{b\} = \{b\}$ (resp. $\F\{b\} = \{b\}$). 
\end{definition}

It is known that the irreducible finite-dimensional integrable highest weight modules $V(\lambda)$ of $U_q(\g)$ are indexed by the set of dominant weights $P^+$. Thus, given $\lambda \in P^+$ we denote by $\B(\lambda)$ the normal crystal associated to $V(\lambda)$, whose highest weight vector $b_\lambda$ is the unique element in $\B(\lambda)$ satisfying the property that $\wt(b_\lambda)=\lambda$, 
\[ \E\{b_\lambda\} = \{b_\lambda\} \qquad \text{and} \qquad \F\{b_\lambda\} = \B(\lambda).\]

\begin{theorem}[\cite{banffcrystal, kashiwara002cours}]
  For $\lambda,\mu\in P^+$, $\B(\lambda) \otimes \B(\mu)$ is the crystal for $V(\lambda)\otimes V(\mu)$.
  \label{thm:func}
\end{theorem}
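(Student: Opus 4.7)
The plan is to follow Kashiwara's original argument, passing through the quantum group $U_q(\g)$ at the crystal limit $q\to 0$. By Kashiwara's existence theorem, each $V(\lambda)$ (resp.\ $V(\mu)$) carries a crystal basis $(L(\lambda),\B(\lambda))$ (resp.\ $(L(\mu),\B(\mu))$), where $L(\lambda)$ is a lattice over the local ring $A=\mathbb{Z}[q]_{(q)}$ that is stable under the Kashiwara operators $\tilde e_i,\tilde f_i$, and $\B(\lambda)$ is an $A/qA$-basis of $L(\lambda)/qL(\lambda)$ stable under the induced operators. Thus one sets
\[
L := L(\lambda)\otimes_A L(\mu), \qquad B := \B(\lambda)\otimes \B(\mu)\subset L/qL,
\]
and the goal reduces to showing that $(L,B)$ is a crystal basis of $V(\lambda)\otimes V(\mu)$ inducing exactly the combinatorial structure of equations \eqref{tensorproduct1}--\eqref{tensorproduct2a}.

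First I would reduce to the rank-one case: since each axiom and each formula only involves a single simple index $i\in I$, it suffices to analyze how the $i$th Kashiwara operators act on tensor products of $U_{q,i}(\mathfrak{sl}_2)$-modules. Decomposing both $V(\lambda)$ and $V(\mu)$ as $U_{q,i}(\mathfrak{sl}_2)$-modules and writing each weight vector as a sum of $\tilde f_i^k v$ for highest weight vectors $v$, one uses the coproduct $\Delta(e_i)=e_i\otimes 1+ K_i\otimes e_i$ and $\Delta(f_i)=f_i\otimes K_i^{-1}+1\otimes f_i$, together with the $q$-binomial identities for divided powers, to compute $\tilde e_i(u\otimes v)$ and $\tilde f_i(u\otimes v)$ modulo $q$. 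The crucial cancellations in the $q\to 0$ limit collapse the sum to a single surviving term, which depending on the sign of $\varphi_i(u)-\varepsilon_i(v)$ acts on one tensor factor only. This yields exactly the piecewise rules \eqref{tensorproduct2} and \eqref{tensorproduct2a}.

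Once the action of $\tilde e_i,\tilde f_i$ on $B$ is identified with the combinatorial rule, the axioms (C1)--(C4) can be checked directly. Axiom (C1) follows from additivity of $\wt$ since $\langle\alpha_i^\vee,\wt(b_1\otimes b_2)\rangle=\wt_i(b_1)+\wt_i(b_2)=\varphi_i(b_1\otimes b_2)-\varepsilon_i(b_1\otimes b_2)$ after substituting the maxima in \eqref{tensorproduct1}. Axioms (C2), (C3) come from tracking the weights and the characterization $\tilde f_i$ as a partial inverse of $\tilde e_i$ in each factor. Axiom (C4) requires showing that iterating $\tilde f_i$ (respectively $\tilde e_i$) on $b_1\otimes b_2$ terminates after exactly $\varphi_i(b_2)+\max(0,\varphi_i(b_1)+\wt_i(b_2))$ steps, which follows from induction on the length of the $i$-strings through $b_1$ and $b_2$ combined with the branching rule in \eqref{tensorproduct2a}.

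The main obstacle is the coproduct computation modulo $q$: a priori the action of $\Delta(\tilde e_i)$ and $\Delta(\tilde f_i)$ on $L$ involves many terms weighted by rational functions in $q$, and showing that only one term survives in the classical limit requires a careful $q$-binomial bookkeeping. Alternatively, one can bypass this with a purely combinatorial ``grand loop'' induction (simultaneously proving invariance of $L$ and existence of the basis), as in \cite{kashiwara002cours}. Either way, once the rank-one tensor product rule is established, the statement of the theorem is a direct consequence.
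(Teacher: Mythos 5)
The paper states this theorem with a citation to \cite{banffcrystal, kashiwara002cours} and gives no proof of its own, so there is no internal argument to compare against; what matters is whether your sketch correctly outlines the cited result. It does: this is Kashiwara's tensor product theorem for crystal bases, and the reduction to rank one, the coproduct of $U_{q,i}(\mathfrak{sl}_2)$, and the $q\to 0$ cancellation isolating a single surviving term is exactly the standard argument that produces the combinatorial rules \eqref{tensorproduct2}--\eqref{tensorproduct2a}, with the axioms (C1)--(C4) then read off from the resulting crystal structure on $\B(\lambda)\otimes\B(\mu)$.

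One bookkeeping slip in your verification of (C4): the length of the $\tilde f_i$-string below $b_1\otimes b_2$ is
\[
\varphi_i(b_1\otimes b_2)=\max\bigl(\varphi_i(b_2),\,\varphi_i(b_1)+\wt_i(b_2)\bigr)
=\varphi_i(b_2)+\max\bigl(0,\,\varphi_i(b_1)-\varepsilon_i(b_2)\bigr),
\]
not $\varphi_i(b_2)+\max(0,\varphi_i(b_1)+\wt_i(b_2))$ as you wrote; the inner term should be $\varphi_i(b_1)-\varepsilon_i(b_2)$, using $\wt_i(b_2)=\varphi_i(b_2)-\varepsilon_i(b_2)$. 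This is harmless to the structure of the argument. Also, a small clarification of framing: the ``grand loop'' in \cite{kashiwara002cours} is not a purely combinatorial alternative to the coproduct computation but rather the induction scheme in which that computation is embedded, proving existence of crystal bases for the $V(\lambda)$ and their tensor compatibility simultaneously, so the two ``routes'' you offer at the end are really one.
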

 The following decomposition formula follows from the definition of the tensor product. 

 \begin{proposition}
 \label{decomposition}
For  $\lambda, \mu \in P^+$,
\[\displaystyle \B (\lambda) \otimes \B (\mu)\simeq
\bigoplus_{\begin{smallmatrix}b\in\B(\mu)\\\varepsilon_i(b)\le \varphi_i(b_\lambda)\, \;i\in I \end{smallmatrix}}
\B(\lambda+\wt(b)).\]
\end{proposition}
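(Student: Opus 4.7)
The plan is to identify the highest weight vectors of $\B(\lambda) \otimes \B(\mu)$ explicitly, since by the classification of normal highest weight crystals each connected component of a normal $\g$-crystal is isomorphic to $\B(\nu)$ where $\nu$ is the weight of its unique highest weight vector. The decomposition then follows once the highest weight vectors are enumerated together with their weights.

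First, I would analyze the condition $e_i(b_1 \otimes b_2) = 0$ using the tensor product rule \eqref{tensorproduct2}. In the case $\varphi_i(b_1) < \varepsilon_i(b_2)$, the rule forces $e_i(b_1 \otimes b_2) = b_1 \otimes e_i(b_2)$, which can only vanish if $\varepsilon_i(b_2) = 0$; but this contradicts $\varphi_i(b_1) < \varepsilon_i(b_2)$ since $\varphi_i(b_1) \geq 0$. Hence the relevant case is $\varphi_i(b_1) \geq \varepsilon_i(b_2)$, and here $e_i(b_1 \otimes b_2) = 0$ holds if and only if $\varepsilon_i(b_1) = 0$. Combining over all $i \in I$, the vector $b_1 \otimes b_2$ is a highest weight vector exactly when $\varepsilon_i(b_1) = 0$ for every $i$ and $\varphi_i(b_1) \geq \varepsilon_i(b_2)$ for every $i$.

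Next, I would use that $b_\lambda \in \B(\lambda)$ is the unique vector satisfying $\varepsilon_i(b_\lambda) = 0$ for all $i \in I$; this is immediate from $\E\{b_\lambda\} = \{b_\lambda\}$ together with the connectedness of $\B(\lambda)$ via $\F$. Therefore the highest weight vectors in $\B(\lambda) \otimes \B(\mu)$ are precisely the elements $b_\lambda \otimes b$ with $b \in \B(\mu)$ satisfying $\varepsilon_i(b) \leq \varphi_i(b_\lambda)$ for all $i \in I$, and the weight of such a vector is $\lambda + \wt(b)$ by \eqref{tensorproduct1}.

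Finally, I would invoke the fact that a normal $\g$-crystal decomposes as the disjoint union of its connected components and that each connected component is isomorphic to $\B(\nu)$ where $\nu$ is the weight of its highest weight vector (this is part of the Kashiwara theory cited in \cite{banffcrystal,kashiwara002cours}, which is also used implicitly in Theorem \ref{thm:func}). Applying this to $\B(\lambda) \otimes \B(\mu)$ and summing over the set of highest weight vectors identified above yields the claimed decomposition. The main subtlety is the justification that every connected component of $\B(\lambda) \otimes \B(\mu)$ contains a unique highest weight vector and is then isomorphic to the associated $\B(\lambda + \wt(b))$; this is a classification statement for normal crystals of finite type that I would simply quote rather than reprove.
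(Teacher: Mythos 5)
Your proof is correct and is precisely the standard argument that the paper is alluding to when it says the decomposition ``follows from the definition of the tensor product''; the paper itself gives no written proof. Your case analysis of $e_i(b_1\otimes b_2)=0$ under the Kashiwara convention, the identification of $b_\lambda$ as the unique source of $\B(\lambda)$, and the appeal to the classification of connected normal crystals by their highest weight together fill in exactly the details the authors left implicit.
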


The action of crystal operators on $\B_1\otimes\cdots\otimes \B_r$ can be computed using \emph{signature rules}. As noted in \cite[p.23]{bump2017crystal}, these rules are obtained by assigning to each factor $b_j$ in $b_1 \otimes \dots \otimes b_r$ the sign $(-)^{\varepsilon_i(b_j)} (+)^{\varphi_i(b_j)}$  and then successively bracketing any pair of the form $(+-)$ until all unbracketed symbols are of the form $(-)^{a} (+)^{b}$. It then follows that
\begin{equation}
    \varepsilon_i(b_1 \otimes \dots \otimes b_r) = a \hbox{ and }\varphi_i(b_1 \otimes \dots \otimes b_r) = b,\label{tensorproduct}
\end{equation}
so that $e_i$ will act on the factor associated to the rightmost unbracketed $(-)$ and $f_i$ will act on the factor associated to the leftmost unbracketed $(+)$ .

\subsection{Virtual Crystals}
\label{virtualdefs}
For any Dynkin diagram $D$, denote by  $P_{D}$ the corresponding integral weight lattice and by $\omega^D_i$ the corresponding fundamental weights.
Let $X$ and $Y$ be two Dynkin diagrams and let $aut$  be an automorphism of $Y$ such that distinct nodes of $Y$ in the same $aut$-orbit are not connected by an edge. We say there is an embedding $\psi: X \hookrightarrow Y$ if there exists a bijection $\Psi: X \rightarrow Y/aut$, which preserves the edges, inducing a map 
\[P_{X} \rightarrow P_{Y}\]
given by the assignment 
\[\omega^{X}_{i} \mapsto \sum_{j \in { \Psi}(i)} \gamma_{i} (\omega^{Y}_{j}), \]
with $\gamma_i$ given as in the  Table \ref{fig:gamma}.

Consequently, we have a natural embedding of the Weyl group $W^X$ into $W^Y$, identifying $W^X$ with the set of elements $\widetilde{W}^X$ in $W^Y$ that are fixed under the Dynkin symmetry:
\begin{align}\label{weylvirtual}
W^X \cong \widetilde{W}^X:= \langle
\Pi_{j\in \psi(i)} \tilde{s}_j \; |\; i \in I^X\rangle \subset W^Y= \langle \tilde{s}_j \; | \; j \in I^Y \rangle,
\end{align}

\noindent via the group isomorphism $s_i \mapsto \Pi_{j\in \psi(i)} \tilde{s}_j$. 
We abuse notation and use $\psi$ to also denote the induced maps on weight lattices, Weyl groups, and indices $\psi:I^X \to I^Y$. In particular, $\psi $ preserves the strong Bruhat order and reduced expressions for elements.

\begin{table}[ht]
\[
\begin{array}{ccccc} 
      \textbf{X} & \textbf{Y} & \ \gamma_{i}&~\Psi\\
\hline 
 C_n & A_{2n-1} & \gamma_{i} = 1, 1 \leq i < n, \gamma_{n } = 2&~\Psi(n)=\{n\},\Psi(i)=\{i,2n-i\}, 1\leq i<n\\       
\hline
 B_{n} & D_{ n+1} & \gamma_{i} = 2, 1 \leq i < n, \gamma_{n} = 1&~\Psi(n)=\{n,n+1\},\Psi(i)=\{i\}, 1\leq i<n\\
\hline
 F_4 & E_{6} & \gamma_{1} = \gamma_{2} =2, \gamma_3 = \gamma_4 = 1 &~\Psi(1)=\{2\}, \Psi(2)=\{4\},\Psi(3)=\{3,5\},\Psi(4)=\{1,6\}\\
\hline
 G_{2} & D_{4} & \gamma_{1} = 1, \gamma_{2} = 3 & \Psi(1)=\{1,3,4\},\Psi(2)=\{2\}\\
\hline
B_{n} & C_{n} & \gamma_{i} = 2, 1 \leq i < n,  \gamma_{n} = 1  &~\Psi(i)=\{i\}, 1\leq i\leq n\\
\hline
 C_{n} &  B_{n} &  \gamma_{i} = 1, 1 \leq i < n,  \gamma_{n} = 2 & ~\Psi(i)=\{i\},1\leq i\leq n\\
\hline
 B_{n} &  A_{2n-1} & \gamma_{i} = 1, 1 \leq i \leq n & ~\Psi(n)=\{n\},\Psi(i)=\{i,2n-i\}, 1\leq i<n\\
  \hline
 C_{n} &  D_{n+1} & \gamma_{i} =1, 1 \leq i \leq n&~\Psi(n)=\{n,n+1\},\Psi(i)=\{i\}, 1\leq i<n\\
 \end{array}
\]
\caption{The cases when $X = B_{n}, Y = C_{n}$, $X = C_{n}, Y = B_{n}$, and $X=B_n, C_n, Y=A_{2n-1}$ were considered in \cite{fujita18, pappe2023promotion}, \cite{fujita18}, and \cite{Kashsimilar96,  baker2000zero}, respectively. The rest appear in \cite{Kashsimilar96, schiscrivirtual15, 
bump2017crystal}. 
}\label{fig:gamma}
\end{table}

\begin{remark}
    Kashiwara \cite{Kashsimilar96} allows coefficients not necessarily equal attached to the elements in the same orbit.  The Kashiwara  procedure for the folding $X=G_2\hookrightarrow Y=A_5$is  defined by the assigment $\omega_1^X\rightarrow \omega_1^Y+2\omega_3^Y+\omega_5^Y$ and $\omega_2^X\rightarrow \omega_2^Y+\omega_4^Y $.
\end{remark}

\begin{definition}
\label{def:virtual crystal} Suppose $X$ and $Y$ are Dynkin diagrams with an embedding $\psi:X\hookrightarrow Y$ as above. 
Let $(\tilde{\B};\tilde{e}_j, \tilde{f}_j, \tilde{\varphi}_j, \tilde{\varepsilon}_j)_{j \in I^Y}$ be a normal $\mathfrak{g}_{Y}$-crystal.
A \newword{virtual $\g_X$-crystal} is a subset  $\mathcal{V} \subset \tilde{\B}$ such that $\mathcal{V}$ has a normal $\g_X$-crystal structure where for any $i \in I^X$ the crystal operators  are given by:
\begin{align}
\label{axiomvirtual1}
e^{\bf v}_{i} :=\underset{j \in \psi(i)}{\prod} \tilde{e}^{\gamma_{i}}_{j}, \qquad\qquad
f^{\bf v}_{i} := \underset{j \in \psi(i)}{\prod} \tilde{f}^{\gamma_{i}}_{j}, 
\end{align}
and for any choice of $j \in \psi(i)$, the string operators defined as: 
\begin{align}
\label{axiomvirtual2}\varepsilon_{i}:= \gamma^{-1}_{i}\tilde{\varepsilon}_{j} &\quad  \varphi_{i}:= \gamma^{-1}_{i}\tilde{\varphi}_{j}. 
\end{align}

Additionally, if a $\g_{X}$-crystal $\B$ is isomorphic to a virtual $\g_X$-crystal $\V \subset \tilde{\B}$, we call the associated isomorphism $\vt_\psi: \B \rightarrow \V$ the \newword{virtualization} map. 
\end{definition}

A priori, it is not clear that a virtual crystal is well-defined; hence a few important observations are in order. 
\begin{enumerate}
\item As noted in \cite[Rem. 5.2]{bump2017crystal} the elements $j,j' \in \psi(i)$ are not connected in $Y$, hence the associated operators $\tilde{f}_j, \tilde{f}_{j'}$ commute, so that their order in \eqref{axiomvirtual1} does not matter. Thus, $ e^{\bf v}_i$ and $ f^{\bf v}_i$ are well-defined. 
\item For any $b \in \tilde{\B}$ and $i \in I^X$, the string operators $\tilde{\varepsilon}_j(b)$ are constant with value a multiple of $\gamma_i$ over all $j \in \psi(i)$. Thus, the string operators in \eqref{axiomvirtual2} are independent of the choice of $j \in \psi(i)$ and thus well-defined. 
\item A proof that the operators $(e_i^{\bf v}, f_i^{\bf v}, \varepsilon_i, \varphi_i)$ in \eqref{axiomvirtual1} and \eqref{axiomvirtual2} endow $\V$ with the structure of a normal $\g_X$-crystal is given in \cite[Prop. 5.4]{bump2017crystal}. 
\item For any $b \in \V$, the weight map for $\V$ as a virtual $\g_X$-crystal is given by:
\[\mathrm{wt}(b)=\sum_{i\in I^X}(\varphi_i(b)-\varepsilon_i(b))\omega_i^X\]
and satisfies the property that $\psi(\wt(b))=\widetilde{\wt}(b)$ where $\widetilde{\wt}(b)=\sum_{i\in I^X}(\tilde\varphi_j(b)-\tilde\varepsilon_j(b))\omega_i^Y$ where
$\widetilde{\wt}$ is the weight map of $\tilde \B$ \cite[Rem. 5.3]{bump2017crystal}.
\end{enumerate}

\begin{remark}
\label{rem:virtual hw} By \eqref{axiomvirtual2} any virtualization maps the highest weight vector of $\B$ to the highest weight vector in $\V$, which coincides with that of $\tilde{\B}$ \cite[Proposition 5.7]{bump2017crystal}. Consequently, for each choice of embedding $\psi: X \hookrightarrow Y$ the associated virtualization is uniquely determined. 
\end{remark}

\begin{proposition}\label{virtualproperties}\cite[Proposition 6.4]{virtualtensor}  
Virtualization is closed under tensor products and direct sums. That is, for any pair of virtual crystals $(\V_1,  \tilde \B_1)$  and  $(\V_2, \tilde \B_2)$ then both $(\V_1 \otimes \V_2,  \tilde \B_1\otimes \tilde\B_2)$ and $(\V_1 \oplus \V_2,  \tilde \B_1\oplus \tilde\B_2)$ are also virtual crystals.  
\end{proposition}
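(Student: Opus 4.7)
My plan is to handle the two parts separately. The direct sum claim is essentially formal: elements of $\V_1 \oplus \V_2 \subset \tilde\B_1 \oplus \tilde\B_2$ lie in exactly one summand, and the ambient operators $\tilde e_j, \tilde f_j$ on a direct sum preserve summands, so the virtual operators of \eqref{axiomvirtual1} on $\tilde\B_1 \oplus \tilde\B_2$ restrict to the virtual operators already defined on $\V_1$ and $\V_2$. Closure, weight identity and axioms (C1)--(C4) for $\V_1 \oplus \V_2$ then follow componentwise from the corresponding properties on each summand. I would therefore focus on the tensor product claim, where the substance lies.

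The goal there is to check that on $\V_1 \otimes \V_2 \subset \tilde\B_1 \otimes \tilde\B_2$, the virtual operators $e_i^{\bf v}, f_i^{\bf v}$ of \eqref{axiomvirtual1} and the rescaled string values of \eqref{axiomvirtual2} reproduce the $\g_X$-crystal structure on $\V_1 \otimes \V_2$ obtained by tensoring $\V_1$ and $\V_2$ as $\g_X$-crystals via \eqref{tensorproduct2}--\eqref{tensorproduct2a}. The essential input, from items (2) and (4) following Definition~\ref{def:virtual crystal}, is that for any $v \in \V_k$ and $i \in I^X$ the numbers $\tilde\varepsilon_j(v)$ and $\tilde\varphi_j(v)$ are independent of the choice of $j \in \psi(i)$ and are integer multiples of $\gamma_i$. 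I would then analyze $f_i^{\bf v}(v_1 \otimes v_2) = \prod_{j \in \psi(i)} \tilde f_j^{\gamma_i}(v_1 \otimes v_2)$ by splitting on whether $\varphi_i(v_1) > \varepsilon_i(v_2)$ or $\varphi_i(v_1) \leq \varepsilon_i(v_2)$. In the first case, $\tilde\varphi_j(v_1) - \tilde\varepsilon_j(v_2) \geq \gamma_i$, so all $\gamma_i$ iterated applications of $\tilde f_j$ land on the first factor by the signature rule, yielding $\tilde f_j^{\gamma_i}(v_1) \otimes v_2$. In the second case $\tilde\varphi_j(v_1) \leq \tilde\varepsilon_j(v_2)$, and each iterate lands on the second factor, yielding $v_1 \otimes \tilde f_j^{\gamma_i}(v_2)$. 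Since $\tilde f_j$ and $\tilde f_{j'}$ commute for distinct $j, j' \in \psi(i)$ (their nodes are not edge-connected in $Y$), taking the product over $j \in \psi(i)$ gives exactly $f_i(v_1) \otimes v_2$ or $v_1 \otimes f_i(v_2)$, matching \eqref{tensorproduct2a}. The argument for $e_i^{\bf v}$ is symmetric, closure $f_i^{\bf v}(\V_1 \otimes \V_2) \subset (\V_1 \otimes \V_2) \cup \{0\}$ follows from closure on each individual factor, and axioms (C1)--(C4) propagate using the weight identity $\psi(\wt(b)) = \widetilde{\wt}(b)$.

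The main obstacle, and the reason the divisibility hypothesis is indispensable, is ruling out the intermediate scenario where $\tilde\varphi_j(v_1) - \tilde\varepsilon_j(v_2)$ lies strictly between $0$ and $\gamma_i$. In such a scenario, the $\gamma_i$-fold application $\tilde f_j^{\gamma_i}$ would cross from the first factor to the second midway, producing an output that is genuinely mixed and, a priori, need not lie in $\V_1 \otimes \V_2$. The fact that both $\tilde\varphi_j(v_1)$ and $\tilde\varepsilon_j(v_2)$ are multiples of $\gamma_i$ eliminates this spill-over entirely, which is precisely the combinatorial content powering the whole proof.
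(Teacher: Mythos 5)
The paper does not prove this proposition; it cites it verbatim from \cite[Proposition 6.4]{virtualtensor}, so there is no in-paper proof to compare against. Evaluating your argument on its own terms: the direct-sum reduction is correct and essentially formal, and your tensor-product analysis isolates exactly the right point. The crucial observation is that $\tilde\varphi_j(v_1)$ and $\tilde\varepsilon_j(v_2)$ are both integer multiples of $\gamma_i$, so the inequality $\tilde\varphi_j(v_1) > \tilde\varepsilon_j(v_2)$ (equivalently $\varphi_i(v_1) > \varepsilon_i(v_2)$) automatically upgrades to a gap of size at least $\gamma_i$, which by \eqref{formula:nfis} prevents $\tilde f_j^{\gamma_i}$ from straddling the two tensor factors; the resulting case split reproduces the $\g_X$-tensor rule \eqref{tensorproduct2a}, and the $e_i^{\bf v}$ case is symmetric. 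One minor point is left implicit: after $\tilde f_{j_1}^{\gamma_i}$ has acted on the first factor, the assertion that $\tilde f_{j_2}^{\gamma_i}$ (for $j_2 \in \psi(i)$, $j_2 \neq j_1$) also acts on the first factor needs more than commutativity of the operators---you also need that applying $\tilde f_{j_1}$ does not alter $\tilde\varphi_{j_2}$ on the first factor or $\tilde\varepsilon_{j_2}$ on the second. This holds because $j_1$ and $j_2$ are non-adjacent, hence orthogonal, nodes of $Y$, and in a normal crystal the $j_2$-string lengths are invariant under $\tilde e_{j_1}, \tilde f_{j_1}$ for orthogonal $j_1$. With that addition your proof is complete and sound.
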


\subsection{Weyl Group Orbits} 
\label{subsec:weyl-group-orbits}
Suppose $\B(\lambda) \in \g$-crystals with $\lambda \in P^+$. 
There is a natural action of the Weyl group $W$ on the set of weights $P$, determined by the action of the simple reflections $s_i$ on any $\mu \in P$:
\[
s_i(\mu) := \mu - \langle \alpha_i^\vee, \mu \rangle \alpha_i.
\]
Given $\lambda \in P^+$, we call the weights $\sigma \lambda$ in the $W$-orbit of $\lambda$ the \newword{extremal weights} of $\B(\lambda)$. 

The $W$-action on $P$, in turn, induces an action of $W$ on $\B(\lambda)$ given by flipping an element $b \in \B(\lambda)$ across the associated $i$-string; more precisely, for each $i \in I$:
\begin{equation}
 s_i(b)  : = 
    \begin{cases*}
      f^{\varphi_i(b)-\varepsilon_i(b)}_i(b)  & \;if $\varphi_i(b) - \varepsilon_i(b) \ge 0$, \\
      e^{\varepsilon_i(b)-\varphi_i(b)}_i(b)       & \;if  $\varphi_i(b) - \varepsilon_i(b)  < 0.$ \end{cases*}
\end{equation}
Setting $b_{\sigma \lambda}:= \sigma(b_\lambda)$ for any $\sigma \in W$, the \newword{W-orbit} of $b_\lambda$ is the set 
\begin{align}
\cO(\lambda):=&\{b_{\sigma\lambda} \in \B(\lambda)\; | \; \sigma\in W\}.
 \end{align}
Let $f_j^{\mathrm{max}}(b):= f_i^{\varphi_i(b)}(b)$ for any $b \in \B$, and for any $w \in W$ with a reduced expression $\mathrm{rex}(w)=s_{i_1}\dots s_{i_k}$, let
\begin{align}\label{operatorF}\F^*_w:= f_{i_1}^{\mathrm{max}}\dots f_{i_k}^{\mathrm{max}} \in \F,\end{align}
then 
\begin{align}\label{Worbit}
\cO(\lambda)=
\{f_{i_1}^\mathrm{max} f_{i_2}^\mathrm{max}\cdots f_{i_k}^\mathrm{max}(b_\lambda) \; |\; s_{i_1}s_{i_1}\dots s_{i_k} = \mathrm{rex}(w) \mbox{ with } w \leq w_0\}= \bigcup_{w \leq  w_0} \F^*_w\{b_\lambda\},
 \end{align}
where the union is taken over all $w \in W$, with $w_0$ the longest element in $W$. As before, we call the elements $b_{\sigma \lambda} \in \cO(\lambda)$ the \newword{extremal weight vectors} or \newword{keys} of $\B(\lambda)$. 

Evidently, for any $\lambda \in P^+$ and $w \in W$, if $v \in wW_\lambda$, with $W_\lambda$ the stabilizer subgroup of $\lambda$ in $W$, then $v\lambda = w\lambda$, so that $b_{v\lambda} = b_{w \lambda}$. Hence, there is a natural correspondence from $\cO(\lambda)$ and the set of minimal coset representatives of $W/W_\lambda$. 

\subsection{The Lusztig--Sch\"utzenberger Involution}
\label{sec:evac}
Given any $\lambda \in P^+$, the \newword{Lusztig--Sch\"utzenberger involution} $\xi=\xi_{\B(\lambda)}: \B(\lambda) \rightarrow \B(\lambda)$ is defined as the unique set involution such that for all $i \in I$ and $b \in \B(\lambda)$:
\[e_i\xi(b)=\xi f_{\theta(i)}(b), \quad
f_i\xi(b)=\xi e_{\theta(i)}(b),\quad \text{and} \quad \wt(\xi(b))=w_0\wt(b),
\]
where $\theta$ is the automorphism of $I$ defined by applying the longest element $w_{0} \in W$ to the simple roots: 
\[w_{0} \alpha_{i} = - \alpha_{\theta(i)}.\]

The Lusztig--Sch\"utzenberger involution exchanges the string length operators as follows
\begin{equation}
   \label{xilengths}\varepsilon_{\theta(j)}(b)=\varphi_{j}(\xi (b)), \mbox{ for $j\in I$, $b\in \B(\lambda)$.} 
\end{equation}
and acts on $\cO(\lambda)$ via 
$\xi( b_{\sigma\lambda})=b_{w_0\sigma\lambda}$ since $\mathrm{wt}(\xi( b_{\sigma\lambda}))=w_0\mathrm{wt}(b_{\sigma\lambda})
=w_0\sigma\lambda.$
Thus, given any normal $\g$-crystal $\B$ we can define $\xi_\B: \B \to \B$ by applying the appropriate $\xi_{\B(\lambda)}$ to each connected component $\B(\lambda)$ of $\B$. 

\begin{example}\label{re:schutzorbit}   
In type $A_n$ all fundamental weights are minuscule and $\theta(i)=n+1-i$. 
In types $B_n$ and $C_n$ the minuscule fundamental weights are the spin weight $\omega^{B_n}_n$ and respectively $\omega^{C_{n}}_1$ where $\theta = Id$. In particular, these fundamental weights are all extremal and form a single $W$-orbit with $\xi b_{\sigma\lambda}=b_{-\sigma\lambda}$. So then, in the type $B$ spin case, $\B(\omega_n^{B_n})=\cO(\omega_n^{B_n})$ \cite[Section 5.4]{bump2017crystal} (see also \cite{azenhas2022symplectic}).
\end{example}

The following proposition is found in \cite[Thm. 4]{torres2024virtual} in the context of Littelmann-paths and Lusztig--Sch\"utzenberger involution given by that defined in \cite{pan2018virtualization}. 

\begin{proposition}\cite[Thm. 4]{torres2024virtual}
\label{prop:virtualevac}
Given a $\g_X$-crystal $\B$ and virtualization map $\vt: \B \to \V \subset \tilde{\B}$, with $\tilde{\B}$ a $\g_Y$-crystal, we have that:
\[
\vt(\xi_\B(\B)) = \xi_{\tilde{\B}}(\vt(\B)),
\]
Thus, virtualization commutes with the Lusztig--Sch\"utzenberger involution.  
\end{proposition}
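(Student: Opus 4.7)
The plan is to exploit the uniqueness characterization of the Lusztig--Sch\"utzenberger involution from Section~\ref{sec:evac}: $\xi_\B$ is the unique set involution on $\B$ satisfying $e_i\xi = \xi f_{\theta^X(i)}$, $f_i\xi = \xi e_{\theta^X(i)}$, and $\wt\circ\xi = w_0^X\circ\wt$. Hence, once it is known that $\xi_{\tilde\B}$ preserves $\V \subset \tilde\B$, the map $\eta := \vt^{-1}\circ \xi_{\tilde\B}|_{\V}\circ \vt : \B \to \B$ is a candidate for $\xi_{\B}$, and it suffices to verify these three axioms for $\eta$.

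The first step is to show $\xi_{\tilde\B}(\V) \subseteq \V$. Since $\V$ is characterized inside $\tilde\B$ by the equalities $\tilde\varepsilon_j(b) = \tilde\varepsilon_{j'}(b)$ and $\tilde\varphi_j(b) = \tilde\varphi_{j'}(b)$ for all $j, j' \in \psi(i)$, each being a multiple of $\gamma_i$, the identity \eqref{xilengths} reduces the claim to the compatibility $\theta^Y(\psi(i)) = \psi(i)$, which is verified case-by-case from Table~\ref{fig:gamma}.

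Next, I would verify the three axioms for $\eta$. For the crystal operator axioms, using the definition $e_i^{\mathbf v} = \prod_{j\in \psi(i)} \tilde e_j^{\gamma_i}$ together with the intertwining $\tilde e_j \xi_{\tilde\B} = \xi_{\tilde\B}\tilde f_{\theta^Y(j)}$, matching both sides of $e_i^X\eta = \eta f_{\theta^X(i)}^X$ forces the two compatibilities $\theta^Y(\psi(i)) = \psi(\theta^X(i))$ and $\gamma_{\theta^X(i)} = \gamma_i$ (the analogous argument handles $f_i^X\eta = \eta e_{\theta^X(i)}^X$). For the weight axiom, applying observation~(4) after Definition~\ref{def:virtual crystal} that $\psi\circ\wt = \widetilde{\wt}\circ\vt$, one reduces to $w_0^Y \circ \psi = \psi\circ w_0^X$ on $P_X$.

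The main obstacle is establishing these three compatibilities. In every case of Table~\ref{fig:gamma} the Lie algebra $\g_X$ has $w_0^X = -\mathrm{Id}$, so $\theta^X = \mathrm{Id}$, and the conditions reduce to checking that $\theta^Y$ permutes each set $\psi(i)$ onto itself and that $w_0^Y\circ\psi = -\psi$. Both hold uniformly: $\theta^Y$ is either trivial (for $Y = C_n, B_n, D_4$) or preserves each $\psi(i)$ setwise (for $Y = A_{2n-1}, D_{n+1}, E_6$); and $w_0^Y$ either equals $-\mathrm{Id}$ directly or equals $-\theta^Y$, which combined with the $\psi(i)$-preservation again gives $w_0^Y\circ\psi = -\psi = \psi\circ w_0^X$. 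Once these compatibilities are verified, uniqueness of $\xi_\B$ yields $\eta = \xi_\B$, which is the claimed commutation $\vt\circ\xi_\B = \xi_{\tilde\B}\circ\vt$.
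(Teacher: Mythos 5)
The paper does not actually prove Proposition~\ref{prop:virtualevac}; it cites \cite[Thm.~4]{torres2024virtual}, where the result is established in the Littelmann path model. Your proposal gives a direct, model-independent proof using the uniqueness characterization of $\xi_\B$, which is a genuinely different and arguably cleaner route. The overall strategy is sound, and the three compatibility conditions $\theta^Y\circ\psi = \psi\circ\theta^X$, $\gamma_{\theta^X(i)}=\gamma_i$, and $w_0^Y\circ\psi = \psi\circ w_0^X$ are correctly identified as the crux, and your case-by-case verification (using that $\theta^X=\mathrm{Id}$ for all the source types $B_n,C_n,F_4,G_2$ in Table~\ref{fig:gamma}, and that $\theta^Y$ preserves each fiber $\psi(i)$ setwise) is correct.

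The one soft spot is your claim that $\V$ is \emph{characterized} inside $\tilde\B$ by the aligned-string condition $\tilde\varepsilon_j(b)=\tilde\varepsilon_{j'}(b)$, $\tilde\varphi_j(b)=\tilde\varphi_{j'}(b)$, each a multiple of $\gamma_i$, for $j,j'\in\psi(i)$. That condition is \emph{necessary} for membership in $\V$, but the paper (and \cite[Rem.~5.2]{bump2017crystal}) only assert it as a property of elements of $\V$, not as a complete characterization, and sufficiency is not evident: there is no reason given that every aligned $b\in\tilde\B$ lies in $\V$. Fortunately you do not need it. A cleaner way to run the argument is to avoid showing $\xi_{\tilde\B}(\V)\subseteq\V$ as a preliminary step altogether: one checks directly that $\vt\circ\xi_\B$ and $\xi_{\tilde\B}\circ\vt$ agree on $b_\lambda$ (both send it to $b_{w_0^Y\psi(\lambda)}$, the unique lowest weight vector of $\tilde\B$, using $w_0^Y\psi(\lambda)=\psi(w_0^X\lambda)$ and the fact that $\vt(b_{w_0^X\lambda})$ is the unique element of $\V$ of that $\tilde\B$-weight), and then propagates via the intertwining relation
\[
\xi_{\tilde\B}\,f_i^{\mathbf v}
= \xi_{\tilde\B}\prod_{j\in\psi(i)}\tilde f_j^{\gamma_i}
= \prod_{j\in\psi(i)}\tilde e_{\theta^Y(j)}^{\gamma_i}\,\xi_{\tilde\B}
= \prod_{j'\in\psi(\theta^X(i))}\tilde e_{j'}^{\gamma_{\theta^X(i)}}\,\xi_{\tilde\B}
= e_{\theta^X(i)}^{\mathbf v}\,\xi_{\tilde\B},
\]
together with $\vt\,e_i = e_i^{\mathbf v}\,\vt$, to all $b = f_{i_1}\cdots f_{i_k}(b_\lambda)$. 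The invariance of $\V$ under $\xi_{\tilde\B}$ then falls out as a byproduct rather than being assumed. With that replacement your argument is complete and correct, and it is a nicer proof than a citation to the path-model version.
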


Given any $\B, \C \in \g$-crystals, it is well known that although $\B \otimes \C$ is isomorphic to $\C \otimes \B$, the isomorphism is nontrivial. In \cite{henriques2006crystals} Henriques and Kamnitzer defined this crystal isomorphism $\sigma_{\B,\C}: \B\otimes \C \to \C \otimes \B $  in terms of the Lusztig--Sch\"utzenberger involution as follows:
\begin{align}\label{eq:commutor}
\sigma_{\B ,\C}(b\otimes c) := \xi_{\C \otimes \B}(\xi_\C(c) \otimes \xi_\B(b)). 
\end{align}

\begin{proposition}\label{prop:commutor-trivial}
Let $\lambda \in P^+$ and $m$ be a positive integer. Consider any connected component $\B(\mu) \subset \B(\lambda)^{\otimes m}$ with $\mu \in P^+$. If $\B(\mu)$ appears with multiplicity one in $\B(\lambda)^{\otimes m}$, then for any $b_1 \otimes \dots \otimes b_m \in\B(\mu)$ we have that
\[
\xi_{\B(\mu)}(b_1 \otimes \dots \otimes b_m) = \xi_{\B(\lambda)}(b_m) \otimes \dots \otimes \xi_{\B(\lambda)}(b_1).
\]
\end{proposition}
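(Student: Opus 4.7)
The plan is to consider the candidate map $\tau:\B(\lambda)^{\otimes m}\to \B(\lambda)^{\otimes m}$ defined on all of $\B(\lambda)^{\otimes m}$ by
\[
\tau(b_1\otimes\cdots\otimes b_m):=\xi_{\B(\lambda)}(b_m)\otimes\cdots\otimes\xi_{\B(\lambda)}(b_1),
\]
and to show that its restriction to any multiplicity-one component $\B(\mu)\subset \B(\lambda)^{\otimes m}$ coincides with $\xi_{\B(\mu)}$. Since $\xi_{\B(\lambda)}$ is itself an involution, $\tau$ is automatically a set involution of $\B(\lambda)^{\otimes m}$.

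The main step is to verify that $\tau$ satisfies, on the ambient crystal $\B(\lambda)^{\otimes m}$, the three defining relations of a Lusztig--Sch\"utzenberger involution:
\[
\wt\circ\tau=w_0\circ\wt,\qquad e_i\,\tau=\tau\,f_{\theta(i)},\qquad f_i\,\tau=\tau\,e_{\theta(i)}\qquad\text{for every }i\in I.
\]
The weight identity is immediate from additivity of the weight on tensor factors together with $\wt\circ\xi=w_0\circ\wt$ on single-factor crystals. For the crystal-operator identities, I would apply the signature rule around \eqref{tensorproduct} together with \eqref{xilengths}. Concretely, each factor $\xi(b_j)$ contributes $(-)^{\varphi_{\theta(i)}(b_j)}(+)^{\varepsilon_{\theta(i)}(b_j)}$ to the $i$-signature of $\tau(b)$, which is precisely the sign-swap of the contribution of $b_j$ to the $\theta(i)$-signature of $b$. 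Since $\tau$ also reverses the factor order, the full $i$-signature of $\tau(b)$ is obtained from the $\theta(i)$-signature of $b$ by reversing the string and swapping the two sign symbols. A direct check then shows that this operation preserves the greedy $(+-)$-bracketing and sends the leftmost unbracketed $+$ in factor $j$ of $b$ to the rightmost unbracketed $-$ in factor $m-j+1$ of $\tau(b)$; combined with the factor-wise identity $e_i\xi=\xi f_{\theta(i)}$, this gives $e_i\tau=\tau f_{\theta(i)}$, and $f_i\tau=\tau e_{\theta(i)}$ follows analogously.

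With these identities in place, the composition $\xi_{\B(\lambda)^{\otimes m}}\circ \tau$ is weight-preserving and commutes with every $e_i$ and $f_i$, and is therefore a crystal automorphism of $\B(\lambda)^{\otimes m}$. Such an automorphism is determined by its action on highest-weight vectors and can only permute connected components of the same isomorphism type. Under the multiplicity-one hypothesis on $\B(\mu)$, the unique highest-weight vector of $\B(\mu)$ must be fixed, so $\xi_{\B(\lambda)^{\otimes m}}\circ \tau$ restricts to the identity on $\B(\mu)$, yielding $\tau|_{\B(\mu)}=\xi_{\B(\mu)}$, which is the asserted formula.

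The main obstacle I anticipate is the combinatorial bookkeeping in the signature-rule step, namely verifying carefully that the reverse-and-swap operation on signatures preserves the greedy $(+-)$-bracketing and pinpointing the factor on which $e_i$ acts in $\tau(b)$ as the $\xi$-image of the factor on which $f_{\theta(i)}$ acts in $b$. Everything else is formal, and the multiplicity-one hypothesis enters only at the final step, to upgrade the statement that $\tau$ permutes components of the same isomorphism type into the genuine identification of $\tau|_{\B(\mu)}$ with $\xi_{\B(\mu)}$.
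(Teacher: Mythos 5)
Your argument is correct and takes a genuinely different route from the paper's. The paper invokes the Henriques--Kamnitzer commutor $\sigma_{\B,\C}$ from \eqref{eq:commutor}, uses Schur's lemma to conclude that $\sigma$ restricts to the identity on the multiplicity-one component $\B(\mu)$, unfolds the defining formula to obtain $\xi(b_1\otimes\cdots\otimes b_m)=\xi(b_{k+1}\otimes\cdots\otimes b_m)\otimes\xi(b_1\otimes\cdots\otimes b_k)$, and then recurses on $k$, which requires the further observation that the two partial tensors again sit in multiplicity-one components. You instead write down the candidate map $\tau$ once and for all on $\B(\lambda)^{\otimes m}$, verify via the signature rule around \eqref{tensorproduct} together with \eqref{xilengths} that $\tau$ satisfies the defining relations of a Lusztig--Sch\"utzenberger involution on the whole tensor crystal, and then use the Schur-lemma/multiplicity-one observation exactly once at the end to identify $\tau|_{\B(\mu)}$ with $\xi_{\B(\mu)}$. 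Your signature sketch is correct: the $i$-signature of $\tau(b)$ is the $\theta(i)$-signature of $b$ read in reverse with the symbols $+$ and $-$ interchanged, and this operation preserves the greedy $(+-)$-bracketing and exchanges ``leftmost unbracketed $+$'' with ``rightmost unbracketed $-$''; the single-factor identity $e_i\xi=\xi f_{\theta(i)}$ then finishes the check. (A minor imprecision: each factor's contribution is not merely ``sign-swapped'' but sign-swapped and internally reversed; this is subsumed in the global reverse-and-swap statement you use, so it does no harm.) Your approach buys a more self-contained, one-shot proof that avoids both the commutor machinery and the recursion; the trade-off is that the hands-on signature bookkeeping must be carried out, which the paper's commutor argument sidesteps at the cost of a nested multiplicity-one verification.
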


\begin{proof}
We begin by noting that $\B(\lambda)^{\otimes k}\otimes \B(\lambda)^{\otimes(m-k)}$ is equal (not just isomorphic) to $\B(\lambda)^{\otimes m}$ for any $k\leq m$. 
Since $\sigma=\sigma_{\B(\lambda)^{\otimes k},\B(\lambda)^{\otimes(m-k)}}:\B(\lambda)^{\otimes m} \to \B(\lambda)^{\otimes m}$ is a crystal morphism, it follows from Schur's lemma that $\sigma$ preserves isotypic components. Thus, if $\B(\mu)$ appears with multiplicity one in $\B(\lambda)^{\otimes m}$, it follows $\B(\mu)$ is mapped isomorphically onto itself under $\sigma$. Thus, for any $b_1 \otimes \dots \otimes b_m \in \B(\mu)$ we have:
\begin{equation}\label{eq:A}
b_1\otimes \dots \otimes b_m = \sigma(b_1\otimes \dots \otimes b_m) = \xi(\xi(b_{k+1}\otimes \dots \otimes b_{m})\otimes \xi(b_{1}\otimes \dots \otimes b_{k}))
\end{equation}
Hence, $\xi(b_1\otimes \dots \otimes b_m) = \xi(b_{k+1}\otimes \dots \otimes b_{m})\otimes \xi(b_{1}\otimes \dots \otimes b_{k})$ for any $0\leq k \leq m$. 
If we then consider the connected components $\B(\nu_1) \ni b_{k+1}\otimes \dots \otimes b_{m}$ and $\B(\nu_2) \ni b_{1}\otimes \dots \otimes b_{k}$, notice that if either appeared with multiplicity higher than one in $\B(\lambda)^{\otimes k}$ and $\B(\lambda)^{\otimes (m-k)}$ respectively, this would in turn imply that $\B(\mu)$ had multiplicity higher than one in $\B(\lambda)^{\otimes m}$, a contradiction. Thus, recursively applying \eqref{eq:A} we get the desired result. 
\end{proof}

\section{Demazure Keys, Demazure Atoms and Virtualization}\label{sec:demazure}
In this section we give a type-independent proof that left and right keys are preserved under virtualization. 
We refer the reader to \cite{kashdemazure,Kashsimilar96}, \cite[Chapter 8]{kashiwara002cours} and \cite[Section 5]{bump2017crystal},  for additional background information.

\subsection{Demazure Crystals}  \label{subsec:demaz}
Let $V(\lambda)$ be an integrable highest weight module with highest weight $\lambda \in P^+$ and $w \in W$. The \newword{Demazure module} $V_w(\lambda)$ is a $\fb$-module generated by the one dimensional weight space $V(\lambda)_{w\lambda}$ of weight $w \lambda$ under action of any Borel subalgebra $\fb \subset \g$ \cite{demazure74}. 
Littelmann \cite{littelmann95conj} proved in all classical types and Kashiwara \cite{kashdemazure} generalized to any symmetrizable Kac--Moody Lie algebra that $V_w(\lambda)$ admits a crystal basis  that arises as a certain subset of $\B(\lambda)$, which we now describe.

For any $w \in W$ with a reduced expression $s_{i_1}\dots s_{i_k}$ define, 
\begin{align}\label{fwew}
    \F_w:= \bigcup_{m_i \in \Z_{\geq 0}} \{f_{i_1}^{m_1}\dots f_{i_k}^{m_k} \}\subset \F \qquad \text{and} \qquad \E_w:= \bigcup_{m_i \in \Z_{\geq 0}} \{e_{i_1}^{m_1}\dots e_{i_k}^{m_k} \}\subset \E,
\end{align}
so that $\F_{w_0}=\F$, $\E_{w_0}=\E$ and $\F_e=\E_e=\mathbbm{1}$ with $e$ the identity of $W$.

\begin{definition}
Given $\lambda \in P^+$ and $w \in W$, the \newword{Demazure crystal} $\B_w(\lambda)$ is given by, 
\[
\B_w(\lambda):= \F_w\{b_\lambda\} \subset \B(\lambda).
\]
Similarly, we define the \newword{opposite Demazure crystal} $\B^w(\lambda)$ as
\[
\B^w(\lambda):= \E_{ww_0}\{b_{w_0\lambda}\} \subset \B(\lambda).
\]
\end{definition}
In particular, we have $\F_w^*(b_\lambda) = b_{w\lambda}$ and $\displaystyle\bigcup_{v \leq  w} \F^*_v\{b_\lambda\}\subseteq \B_w(\lambda)$ with $\B_{w_0}(\lambda) = \B(\lambda)$ and $\B_e(\lambda) = \{b_\lambda\}$. Hence, Demazure crystals can be seen as certain subsets of $\B(\lambda)$ with lowest weight vector $b_{w\lambda}$ that interpolate between the highest weight $b_\lambda$ and the complete irreducible crystal $\B(\lambda)$. By a lowest weight vector $b$ in a Demazure crystal we mean a vector $b$ in that Demazure such that, for all $i\in I$, $f_i(b)=0$ or $f_i(b)$ is not in that Demazure crystal   \cite{ag21}.

Moreover, by definition $\B_w(\lambda)$ admits a filtration by Demazure  crystals, so that for any chain in $W$,
$w_1 \leq w_2 \leq \dots \leq w_k,$ we have
\begin{equation}\label{eq:Dem filtration}
\B_{w_1}(\lambda) \subseteq \B_{w_2}(\lambda)\subseteq \dots \subseteq \B_{w_k}(\lambda).
\end{equation}
Hence, any $\g$-crystal has filtration by Demazure crystals. 

In a similar fashion, Demazure crystals can be decomposed into smaller constituents called \newword{Demazure atoms}, 
\begin{align*}\mathring{\B}_{ w}(\lambda):={\B}_{w }(\lambda)\setminus\bigcup\limits_{ \begin{smallmatrix}\nu\in W\\\nu<w \end{smallmatrix}}{\B}_{\nu}(\lambda).
\end{align*}
In particular, each atom $\mathring{\B}_{ w}(\lambda)$ uniquely contains the extremal weight vector $b_{w\lambda}$.

\begin{remark}
The term Demazure atom has been used in the literature to mean both the \emph{crystal} subset $\mathring{\B}_{w}(\lambda)\subset \B(\lambda)$ and its corresponding \emph{polynomial} character $\mathcal{A}_{w\lambda} \in \Z[x]$, with the notation $\overline{\B}_{w}(\lambda)$ used by Kashiwara to denote this subset instead \cite[Ch. 9.1]{kashiwara002cours}, \cite{SarahMasonAtoms}. In this article, we will only refer to the crystal subset by this name and never discuss its character. 
\end{remark}
 
\subsection{Dilation of Crystals}\label{sec:dilation}
In this section we recall a particularly important virtualization which we will be used in the remainder of the article. We refer the reader to \cite{Kashsimilar96} and \cite[Chp. 8]{kashiwara002cours} for any proofs and details omitted herein. 

\begin{definition}\cite[Theorem 3.1]{Kashsimilar96} Given a positive integer $m$ and $\lambda\in P^+$, the \newword{$m$-dilation} of $\B(\lambda)$ is the unique embedding
\[
\D_{m}:\B(\lambda)\hookrightarrow \B(m\lambda)
\]
defined by mapping the highest weights to each other, $b_\lambda \mapsto b_{m\lambda}$,  and extending to any $b={f}_{i_{1}
}\cdots{f}_{i_{l}}(b_{\lambda}) \in \B(\lambda)$ via:
\[
{f}_{i_{1}
}\cdots{f}_{i_{l}}(b_{\lambda}) \mapsto {f}_{i_{1}}^{m}\cdots{f}_{i_{l}}^{m}(b_{m\lambda}).
\]
\end{definition}
It follows directly from the definition\footnote{The original definition introduced by Kashiwara used \eqref{dilat1} and \eqref{dilat2} since it was given in the more general context of Kac--Moody algebras.} that for any vertex $b\in \B(\lambda)$ and $i\in I$,
\begin{align}\label{dilat1}
\D_{m}(f_ib)=f_i^m\D_m(b),\;\; \D_m(e_ib)=e_i^m\D_m(b) \mbox{ and }
\end{align}
\begin{align}\label{dilat2}\varphi_i(\D_m(b))=m\varphi_i(b), \;\varepsilon_i(\D_m(b))=m\varepsilon_i(b), \;
\mathrm{wt}(\D_m(b))=m\mathrm{wt}(b).
\end{align}

Moreover, for $m,n$ positive integers $\D_{mn}$ factors through $\D_n$ and $\D_m$, 
\[
\begin{tikzcd}
& \B(m\lambda) \arrow[dr,"\D_{n}",hook] &\\
\B(\lambda) \arrow[ur, "\D_{m}",hook] \arrow[dr,"\D_{n}",hook] &&\B(nm\lambda). \\ 
& \B(n\lambda)\arrow[ur, "\D_{m}",hook] &
\end{tikzcd}
\]
Recall Proposition \ref{decomposition}. Thus, more generally, for any crystal $\B$, the $m$-dilation $\D_m$ acts by dilating each connected component individually.

\begin{proposition}{\cite[Lem. 8.1.2]{kashiwara002cours}, \cite[Corollary 2.1.3]{lecouvey2003schensted}} \label{prop:dilation tensor}
For any $\lambda, \mu \in P^+$ and positive integer $m$, the dilation map $\D_m$ preserves highest weight vectors, so that for any $u \in \B(\mu)$ with $\varphi_i(b_\lambda)\geq \varepsilon_i(u)$,
\[ b_{\lambda + \wt(u)} \mapsto b_{m\lambda+m\wt(u)}.\]
Thus $\D_m: \B(\lambda+\wt(u)) \hookrightarrow \B(m\lambda+m\wt(u))$
and consequently,
\[\D_m(\B(\lambda)\otimes \B(\mu)) \subset \D_m(\B(\lambda) )\otimes \D_m(\B(\mu)).\]
Thus, $\D_m\otimes \D_m: \B(\mu)\otimes \B(\lambda)\hookrightarrow \B(m\mu)\otimes \B(m\lambda)$ is an $m$-dilation map. More generally, $\D_m(\bigotimes_{i} \B(\lambda_i) ) \subset \bigotimes_{i} \D_m(\B(\lambda_i))$ for any family $\lambda_i \in P^+$.
\end{proposition}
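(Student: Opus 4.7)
The plan is to reduce to Proposition~\ref{decomposition}: since $\D_m$ is defined to act irreducibly on each connected component of a reducible crystal, and since $\B(\lambda)\otimes\B(\mu)$ decomposes into components indexed by highest weight vectors of the form $b_\lambda\otimes u$ (for $u\in\B(\mu)$ with $\varepsilon_i(u)\le\varphi_i(b_\lambda)$), it suffices to show that the dilation of each such component $\B(\lambda+\wt(u))$ embeds into $\D_m(\B(\lambda))\otimes\D_m(\B(\mu))$. I would first use the signature rule \eqref{tensorproduct2} to verify that the hypothesis $\varepsilon_i(u)\le\varphi_i(b_\lambda)$ makes $b_\lambda\otimes u$ a highest weight vector of weight $\lambda+\wt(u)$, since $e_i(b_\lambda\otimes u)=e_i(b_\lambda)\otimes u=0$ under these bounds.

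Next, I would exhibit $b_{m\lambda}\otimes\D_m(u)$ as a highest weight vector in $\B(m\lambda)\otimes\B(m\mu)$ of weight $m\lambda+m\wt(u)$, by computing via \eqref{dilat2} that $\varepsilon_i(\D_m(u))=m\varepsilon_i(u)\le m\varphi_i(b_\lambda)=\varphi_i(b_{m\lambda})$ and again invoking the tensor signature rule. The connected component it generates is then isomorphic to $\B(m\lambda+m\wt(u))$. By Kashiwara's uniqueness of dilation \cite[Theorem 3.1]{Kashsimilar96}, the embedding $\D_m\colon\B(\lambda+\wt(u))\hookrightarrow\B(m\lambda+m\wt(u))$ is the unique crystal morphism sending the highest weight vector to the highest weight vector and satisfying \eqref{dilat1}, so I would identify the target with this connected component in $\B(m\lambda)\otimes\B(m\mu)$, forcing $\D_m(b_\lambda\otimes u)=b_{m\lambda}\otimes\D_m(u)$.

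The step I expect to be the main obstacle is showing that the rest of the image stays inside $\D_m(\B(\lambda))\otimes\D_m(\B(\mu))$, rather than leaving it after an application of some $f_i^m$. This reduces to checking that $\D_m(\B(\lambda))\otimes\D_m(\B(\mu))$ is closed under each $f_i^m$. The key observation is that for $v_1=\D_m(b_1)$ and $v_2=\D_m(b_2)$, the values $\varphi_i(v_1)=m\varphi_i(b_1)$ and $\varepsilon_i(v_2)=m\varepsilon_i(b_2)$ are both multiples of $m$, so the signature rule's switching condition $\varphi_i(\text{first})>\varepsilon_i(\text{second})$ can only flip at multiples of $m$ steps. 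A short case analysis then shows $f_i^m$ acts entirely on one tensor factor, yielding $\D_m(f_i(b_1))\otimes v_2$, $v_1\otimes\D_m(f_i(b_2))$, or $0$, via \eqref{dilat1}. Combined with the identification in the previous step, iterating $f_i^m$'s from $b_{m\lambda}\otimes\D_m(u)$ sweeps out the entire dilated component without ever leaving $\D_m(\B(\lambda))\otimes\D_m(\B(\mu))$.

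Finally, the statement for arbitrary tensor products $\bigotimes_i\B(\lambda_i)$ would follow by induction on the number of factors, applying Theorem~\ref{thm:func} to reduce $\bigotimes_{i\le k}\B(\lambda_i)\otimes\B(\lambda_{k+1})$ to the two-factor case above, and the symmetric statement $\D_m\otimes\D_m\colon\B(\mu)\otimes\B(\lambda)\hookrightarrow\B(m\mu)\otimes\B(m\lambda)$ is obtained by the same argument with the factors swapped.
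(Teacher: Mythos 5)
The paper states Proposition~\ref{prop:dilation tensor} as a citation to \cite{kashiwara002cours} and \cite{lecouvey2003schensted} and does not supply a proof of its own, so there is no in-paper argument to compare yours against; I am judging the proposal on its own merits.

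Your argument is correct and is essentially the standard way this fact is established. The decomposition via Proposition~\ref{decomposition} correctly identifies $b_\lambda\otimes u$ as the highest weight vector of each summand; the computation $\varepsilon_i(\D_m(u))=m\varepsilon_i(u)\le m\varphi_i(b_\lambda)=\varphi_i(b_{m\lambda})$ correctly shows $b_{m\lambda}\otimes\D_m(u)$ is a highest weight vector of weight $m\lambda+m\wt(u)$; and the core observation---that $\varphi_i(\D_m(b_1))$ and $\varepsilon_i(\D_m(b_2))$ are both multiples of $m$, so in the formula~\eqref{formula:nfis} the middle ``split'' case forces $\varphi_i(v_1)-\varepsilon_i(v_2)\in\{0,m\}$ and hence $f_i^m$ acts entirely on a single tensor factor---is exactly the key point. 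In particular, when $f_i$ acts on the first factor of $b_1\otimes b_2$ one has $\varphi_i(b_1)>\varepsilon_i(b_2)$, so $m\varphi_i(b_1)\ge m\varepsilon_i(b_2)+m$ and $f_i^m$ acts on $\D_m(b_1)$; the parallel holds for the second factor; this gives the inductive identification $\D_m(b_1\otimes b_2)=\D_m(b_1)\otimes\D_m(b_2)$ throughout the component, which is a bit more precise than your phrase ``sweeps out the entire dilated component'' and is worth making explicit. One terminological caution: dilation maps are not crystal morphisms in the sense of the paper's definition (they rescale $\varepsilon_i,\varphi_i,\wt$ rather than preserving them), so ``unique crystal morphism satisfying~\eqref{dilat1}'' should instead appeal to the uniqueness of the similar-crystal (dilation) map in \cite[Theorem~3.1]{Kashsimilar96}. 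With that small adjustment, the proof is complete.
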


Now, let $\F\{b_\lambda^{\otimes m}\} \subset \B(\lambda)^{\otimes m}$ denote the unique connected component with highest weight $m\lambda$ in $\B(\lambda)^{\otimes m}$ and let 
\[G_m: \B(m\lambda)\longrightarrow \F\{b_\lambda^{\otimes m}\}\]
be the induced crystal isomorphism mapping $b_{m\lambda} \to b_\lambda^{\otimes m}$. 
Thus, we have a canonical embedding:
\begin{equation}
\Theta_{m}:= G_m \circ \D_m:
\B({\lambda})\hookrightarrow \F\{b_\lambda^{\otimes m}\} \subset \B({\lambda
})^{\otimes m}.  \label{embded}
\end{equation} 
We will abuse terminology, and refer similarly to $\Theta_m$ as $m$-dilation. \\

The following result was proven by Kashiwara in \cite[Theorem 5.1]{Kashsimilar96} in complete generality for any symmetrizable Kac--Moody Lie algebra for the dilation map $\D_m$. Combined with the uniqueness property of virtualizations (see Remark \ref{rem:virtual hw}), we restate it as follows. 

\begin{theorem}\label{thm:dilation is virt}\cite[Theorem 5.1]{Kashsimilar96} Given any Dynkin diagram $X$ and positive integer $m$, let $\Psi: X \to X$ be the automorphism determined by the assignment $\omega_i \mapsto m \omega_i$ for each $\omega_i \in P$ and $\vt_\psi$ the associated virtualization. Then, for any $\lambda \in P^+$, we have that $\vt_\psi(\B(\lambda)) = \Theta_m(\B(\lambda))$. In particular, $m$-dilation is a virtualization. 
\end{theorem}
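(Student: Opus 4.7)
The strategy is to identify the embedding $\psi$ associated to the diagram automorphism $\Psi\colon X \to X$, $\omega_i \mapsto m\omega_i$, with the degenerate case of Definition \ref{def:virtual crystal} where $\Psi(i)=\{i\}$ and $\gamma_i = m$ for every $i \in I$, and then to verify directly that the dilation $\D_m$ realizes the corresponding virtualization. First, I would observe that under this identification, the virtual crystal operators prescribed by \eqref{axiomvirtual1} and \eqref{axiomvirtual2} specialize to $e_i^{\mathbf{v}} = \tilde{e}_i^{\,m}$, $f_i^{\mathbf{v}} = \tilde{f}_i^{\,m}$, $\varepsilon_i = m^{-1}\tilde{\varepsilon}_i$, and $\varphi_i = m^{-1}\tilde{\varphi}_i$, and that the weight compatibility $\psi(\wt(b)) = \widetilde{\wt}(\D_m(b))$ amounts exactly to $\wt(\D_m(b)) = m\,\wt(b)$, which is part of \eqref{dilat2}.

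Next, I would show that $\V := \D_m(\B(\lambda)) \subset \B(m\lambda)$ is stable under these virtual operators and that $\D_m$ is a crystal isomorphism from $\B(\lambda)$ onto $\V$ so endowed. For $b \in \B(\lambda)$ with $f_i b \in \B(\lambda)$, equation \eqref{dilat1} gives $\tilde{f}_i^{\,m}\D_m(b) = \D_m(f_i b) \in \V$; if instead $f_i b = 0$, then $\varphi_i(b) = 0$, so by \eqref{dilat2} $\tilde{\varphi}_i(\D_m(b)) = 0$ and hence $\tilde{f}_i^{\,m}\D_m(b) = 0$. The analogous argument applies to $\tilde{e}_i^{\,m}$. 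Consequently $\V$ carries a well-defined normal $\g_X$-crystal structure via the virtual operators, and by construction the intertwining relations \eqref{dilat1} together with the scaling relations \eqref{dilat2} make $\D_m$ a crystal isomorphism $\B(\lambda) \xrightarrow{\sim} \V$.

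Finally, I would invoke the uniqueness principle recorded in Remark \ref{rem:virtual hw}: for a fixed embedding $\psi$, the virtualization $\vt_\psi$ is uniquely determined by the requirement that it sends the highest weight vector $b_\lambda$ of $\B(\lambda)$ to the highest weight vector of the virtual image, which in this setting is $b_{m\lambda} \in \B(m\lambda)$. Since Proposition \ref{prop:dilation tensor} (or the definition of $\D_m$ itself) yields $\D_m(b_\lambda) = b_{m\lambda}$, uniqueness forces $\vt_\psi = \D_m$, and composing with the canonical isomorphism $G_m$ identifies this in turn with $\Theta_m$.

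The only subtle point is a bookkeeping one: ensuring the trivial-orbit interpretation of Definition \ref{def:virtual crystal} with uniform coefficient $\gamma_i = m$ is consistent, and that the stability of $\V$ under $\tilde{f}_i^{\,m}$ and $\tilde{e}_i^{\,m}$ properly accounts for the boundary cases where the dilated string is truncated; both are handled by the formulas \eqref{dilat1}--\eqref{dilat2}, so no genuine obstacle arises beyond matching the two definitions.
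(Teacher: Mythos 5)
The proposal is correct and, in effect, supplies the argument that the paper merely cites. The paper's own treatment of this theorem is a single sentence: it invokes Kashiwara's Theorem~5.1 from \cite{Kashsimilar96} (which proves $\D_m$ is a ``similarity'' in the general Kac--Moody setting) and then appeals to the uniqueness of virtualizations recorded in Remark~\ref{rem:virtual hw}. You instead verify the virtualization axioms directly: you correctly identify the degenerate Dynkin embedding with trivial folding, $\Psi(i)=\{i\}$ and uniform coefficient $\gamma_i=m$ (the reading forced by the theorem statement even though Table~\ref{fig:gamma} does not list the $X\to X$ case), check that $\V=\D_m(\B(\lambda))$ is closed under $\tilde f_i^m$, $\tilde e_i^m$ including the truncated case $f_ib=0$ via \eqref{dilat2}, match the string and weight scalings against \eqref{axiomvirtual2} and \eqref{dilat1}--\eqref{dilat2}, and close with the same uniqueness argument. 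This is a genuinely more explicit route than the paper's citation, and since the finite-type setting avoids the subtleties of the Kac--Moody case, the verification is complete as stated. The one caveat worth flagging is the identification at the end: $\D_m$ lands in $\B(m\lambda)$ while $\Theta_m$ lands in $\F\{b_\lambda^{\otimes m}\}\subset\B(\lambda)^{\otimes m}$, and you correctly note that the canonical isomorphism $G_m$ mediates between them; the paper suppresses this distinction in the statement $\vt_\psi(\B(\lambda))=\Theta_m(\B(\lambda))$.
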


As we will see, dilation plays a central definition of left and right keys. To get there, we must first understand the connection between dilation and extremal weight vectors.

\begin{proposition}\cite[Prop. 8.3.2]{kashiwara002cours}\label{prop:theta-extremal}
Given positive integers $m,n$, $\lambda \in P^+$, and $w \in W$, then
\begin{enumerate}
\item for any extremal weight vector $b_{w\lambda} \in \cO(\lambda)$, we have $\Theta_m(b_{w\lambda}) = b_{w\lambda}^{\otimes m}$. In particular, $\Theta_m(\cO(\lambda))=\cO(m\lambda)$, and
\item $\Theta_{mn} = \Theta_m \circ \Theta_n =\Theta_n \circ \Theta_m$.
\end{enumerate}
\end{proposition}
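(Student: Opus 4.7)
My plan is to handle the two parts separately. For part (1), I would decompose $\Theta_m = G_m\circ \D_m$ and analyze how the monoid element $\F_w^*$ from (\ref{operatorF}) interacts with each piece, using the presentation $b_{w\lambda}=\F_w^*(b_\lambda)$ from (\ref{Worbit}). First, applying (\ref{dilat1}) and (\ref{dilat2}) gives
\[
\D_m(f_i^{\varphi_i(b)}(b)) = f_i^{m\varphi_i(b)}(\D_m(b)) = f_i^{\varphi_i(\D_m(b))}(\D_m(b)),
\]
so $\D_m$ commutes with each $f_i^{\max}$ and hence with $\F_w^*$; combined with $\D_m(b_\lambda)=b_{m\lambda}$, this yields $\D_m(b_{w\lambda})=\F_w^*(b_{m\lambda})=b_{w(m\lambda)}$. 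Since $G_m$ is a crystal isomorphism, it too commutes with $\F_w^*$, so $G_m(b_{w(m\lambda)}) = \F_w^*(G_m(b_{m\lambda})) = \F_w^*(b_\lambda^{\otimes m})$.

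The remaining step is to establish $\F_w^*(b_\lambda^{\otimes m}) = b_{w\lambda}^{\otimes m}$, which I would prove by induction on $\ell(w)$ via the tensor-product signature rule (\ref{tensorproduct}). The key observation is that for any extremal weight vector $b$, which satisfies $\varepsilon_i(b)=0$ or $\varphi_i(b)=0$ at each $i$, the $i$-signature of $b^{\otimes m}$ is either purely positive or purely negative, so no bracketing cancellation occurs. Consequently $f_i^{\max}$ acts on the tensor factors sequentially from the left, completely filling each $i$-string before advancing, and produces $(f_i^{\max}(b))^{\otimes m}$. Applying this inductively along a reduced word for $w$ yields $\F_w^*(b_\lambda^{\otimes m})=b_{w\lambda}^{\otimes m}$, and the equality $\Theta_m(\cO(\lambda))=\cO(m\lambda)$ then follows by identifying $\F\{b_\lambda^{\otimes m}\}$ with $\B(m\lambda)$ via $G_m^{-1}$, which matches $b_{w\lambda}^{\otimes m}$ with $b_{w(m\lambda)}$.

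For part (2), I would use a universality argument. Both $\Theta_{mn}$ and $\Theta_m\circ \Theta_n$ are crystal morphisms from $\B(\lambda)$ into $\B(\lambda)^{\otimes mn}$, and they agree on the highest weight vector: $\Theta_{mn}(b_\lambda)=b_\lambda^{\otimes mn}$ by definition, while $\Theta_m(\Theta_n(b_\lambda)) = \Theta_m(b_\lambda^{\otimes n}) = (b_\lambda^{\otimes n})^{\otimes m} = b_\lambda^{\otimes mn}$, where the middle equality follows by applying Proposition \ref{prop:dilation tensor} to the highest weight vector of the component $\F\{b_\lambda^{\otimes n}\}\cong \B(n\lambda)$. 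Since $\B(\lambda)$ is an irreducible crystal, any morphism out of it is determined by the image of $b_\lambda$, so $\Theta_{mn}=\Theta_m\circ \Theta_n$; the same argument with the roles of $m$ and $n$ exchanged gives $\Theta_{mn}=\Theta_n\circ\Theta_m$.

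The main obstacle I anticipate is the signature-rule computation in part (1): verifying that $f_i^{\max}$ commutes with $m$-fold tensor powering on extremal vectors requires a careful case split on whether $\varepsilon_i(b)$ or $\varphi_i(b)$ vanishes, plus confirming that the extremal property is preserved by each $f_{i_j}^{\max}$ along a reduced word. This is elementary but is the one place where an explicit combinatorial computation, rather than an abstract crystal-theoretic argument, seems unavoidable.
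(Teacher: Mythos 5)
The paper does not prove this proposition; it is quoted directly from Kashiwara \cite[Prop. 8.3.2]{kashiwara002cours}, so there is no in-paper argument to compare against. Judged on its own terms, your proof of part (1) is sound: the commutation $\D_m f_i^{\max} = f_i^{\max}\D_m$ via \eqref{dilat1}--\eqref{dilat2}, the crystal-isomorphism property of $G_m$, and the signature-rule lemma that $f_i^{\max}(b^{\otimes m})=(f_i^{\max}(b))^{\otimes m}$ whenever $b$ is extremal (so that $\varepsilon_i(b)=0$ or $\varphi_i(b)=0$ and no bracketing cancellation can occur) together give a correct proof, once you also record the easy fact that $f_i^{\max}$ preserves the set of extremal vectors so that the induction along $\F_w^*=f_{i_1}^{\max}\cdots f_{i_k}^{\max}$ stays inside $\cO(\lambda)$.

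In part (2) there is a terminological error that matters: $\Theta_{mn}$ and $\Theta_m\circ\Theta_n$ are \emph{not} crystal morphisms. A crystal morphism must preserve $\wt$, $\varepsilon_i$, $\varphi_i$ and intertwine $e_i,f_i$ untwisted; by \eqref{dilat2} the map $\Theta_m$ multiplies all string lengths and weights by $m$ and satisfies the twisted relation $\Theta_m(f_i b)=f_i^m\Theta_m(b)$. Your universality argument is still correct in spirit, but the uniqueness principle you need is the one built into the definition of dilation (or equivalently, the uniqueness of virtualizations once the highest weight vector is fixed, Remark \ref{rem:virtual hw}): both $\Theta_{mn}$ and $\Theta_m\circ\Theta_n$ send $b_\lambda\mapsto b_\lambda^{\otimes mn}$ and both satisfy $\Phi(f_i b)=f_i^{mn}\Phi(b)$, hence they agree on all of $\B(\lambda)$ since every vertex is $f_{j_1}\cdots f_{j_r}(b_\lambda)$. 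Replacing ``crystal morphism'' with ``$mn$-dilation'' and citing that uniqueness repairs the argument; as written, the stated justification is wrong even though the conclusion is right.
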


\begin{corollary}\label{cor:multipledilation}
For $b\in\B(\lambda)$ if $\Theta_m(b) = b_{w\lambda}\otimes b' \otimes b_{w'\lambda}$ for some $b' \in \B(\lambda)^{\otimes (m-2)}$ and $w,w'\in W$, then $\Theta_n(b_{w\lambda}\otimes b' \otimes b_{w'\lambda})=b^{\otimes n}_{w\lambda}\otimes \Theta_n(b') \otimes b^{\otimes n}_{w'\lambda} \in \B(\lambda)^{\otimes mn}$.
Moreover, if $\Theta_{m}(b)=b_{w_{1}\lambda}\otimes\cdots\otimes
b_{w_{m}\lambda}\in\cO(\lambda)^{\otimes m}$ for a sequence $w_1,\dots,w_m\in W$, then
$\Theta_{nm}(b)=\Theta_n\Theta_m(b)=b^{\otimes n}_{w_{1}\lambda}\otimes\cdots\otimes
b^{\otimes n}_{w_{m}\lambda}$ determines the same sequence $W$ up to multiplicity. 
\end{corollary}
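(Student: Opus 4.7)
The plan is to deduce both parts directly by combining Propositions \ref{prop:dilation tensor} and \ref{prop:theta-extremal}, with no new ideas needed beyond a careful bookkeeping of how $\Theta_n$ interacts with tensor factors.

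First I would invoke Proposition \ref{prop:dilation tensor}, which guarantees that the dilation $\D_n$, and consequently $\Theta_n = G_n \circ \D_n$, distributes over tensor products of copies of $\B(\lambda)$: an element of $\B(\lambda)^{\otimes k}$ sits inside some component $\B(\mu)$, and the factorization $\D_n\bigl(\bigotimes_i \B(\lambda_i)\bigr)\subset \bigotimes_i \D_n(\B(\lambda_i))$ combined with the identification of highest weight components via $G_n$ shows that $\Theta_n$ may be computed factor-wise. Applying this to the hypothesis $\Theta_m(b) = b_{w\lambda} \otimes b' \otimes b_{w'\lambda}$ gives
\[ \Theta_n\bigl(b_{w\lambda} \otimes b' \otimes b_{w'\lambda}\bigr) = \Theta_n(b_{w\lambda}) \otimes \Theta_n(b') \otimes \Theta_n(b_{w'\lambda}). \]
Proposition \ref{prop:theta-extremal}(1) then identifies the two extremal factors as $\Theta_n(b_{w\lambda}) = b_{w\lambda}^{\otimes n}$ and $\Theta_n(b_{w'\lambda}) = b_{w'\lambda}^{\otimes n}$, yielding the first statement.

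For the second statement, the argument extends immediately. Writing $\Theta_m(b) = b_{w_1\lambda} \otimes \cdots \otimes b_{w_m\lambda}$ and distributing $\Theta_n$ over each of the $m$ factors via Proposition \ref{prop:dilation tensor} produces $\Theta_n(b_{w_1\lambda}) \otimes \cdots \otimes \Theta_n(b_{w_m\lambda})$. Invoking Proposition \ref{prop:theta-extremal}(1) on each factor collapses this to $b_{w_1\lambda}^{\otimes n} \otimes \cdots \otimes b_{w_m\lambda}^{\otimes n}$, and Proposition \ref{prop:theta-extremal}(2) identifies the composition $\Theta_n \circ \Theta_m$ with $\Theta_{nm}$. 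The resulting sequence of Weyl group elements is $w_1, \ldots, w_1, w_2, \ldots, w_2, \ldots, w_m, \ldots, w_m$, with each $w_i$ appearing $n$ consecutive times, which is precisely the original sequence up to multiplicity.

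The only delicate point is justifying the tensor-factor-wise behaviour of $\Theta_n$, and this is exactly what Proposition \ref{prop:dilation tensor} provides: the dilation sends highest weight vectors inside a tensor product to highest weight vectors inside the tensor product of dilations, and $G_n$ respects this decomposition. Once this compatibility is explicitly noted, the corollary becomes a direct calculation, so I do not expect any substantive obstacle beyond ensuring the factorization is stated and applied cleanly.
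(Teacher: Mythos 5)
The paper states this corollary without an explicit proof, presenting it as an immediate consequence of Propositions \ref{prop:dilation tensor} and \ref{prop:theta-extremal}, and your argument fills in exactly that implicit reasoning. You correctly identify that the one nontrivial step is the factor-wise compatibility of $\Theta_n$ with the tensor product, justify it via Proposition \ref{prop:dilation tensor} (whose content, together with the uniqueness of the $n$-dilation on each connected component, is precisely that $\Theta_n^{\otimes m}$ restricted to a component of $\B(\lambda)^{\otimes m}$ coincides with $\Theta_n$), and then close with Proposition \ref{prop:theta-extremal}(1) on the extremal factors and \ref{prop:theta-extremal}(2) for $\Theta_n\circ\Theta_m=\Theta_{nm}$. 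This matches the paper's intended route.
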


It was also shown in \cite[Prop. 8.3.2]{kashiwara002cours} that for any $b\in \B(\lambda)$ if $m$ is adequate then there exist $w_i \in W$
satisfying $w_1\ge \cdots \ge w_m$ in $W$ such that 
\begin{equation}\label{eq:theta-extremal product}
\Theta_m(b) = b_{w_1\lambda} \otimes b_{w_2\lambda} \otimes \dots \otimes b_{w_m\lambda} \in \cO(\lambda)^{\otimes m}.
\end{equation}
This proof, however, is inductive and does not provide explicit bounds for how large such an $m$ must be. In the following  theorem, we expand Proposition \ref{prop:theta-extremal}(1) for any $b \in \B(\lambda)$ by providing sufficient and necessary conditions for $m$ that ensure the decomposition 
$\Theta_m(b) = b_{w\lambda}\otimes b' \otimes b_{w'\lambda}$ with $b_{w\lambda},b_{w'\lambda}$ extremal and $b' \in \B(\lambda)^{\otimes (m-2)}$ occurs. 

\begin{theorem}\label{thm:extremal endpoints}
Let $m \in \mathbb{N}$. 
For all $b \in \B(\lambda)$, there exists $b' \in \B(\lambda)^{\otimes (m-2)}$ and fixed $w\geq w' \in W$ such that 
\[\label{ell}\Theta_m(b) = b_{w\lambda}\otimes b' \otimes b_{w'\lambda} \quad \text{ if and only if } \quad m \geq \ell\]
where $\ell=\max \{ \text{length}(\rho) \; | \; \rho \text{ is an i-string in } \B(\lambda)\text{ for some } i \in I\}.$
\end{theorem}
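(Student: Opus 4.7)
The plan is to leverage the compatibility of the dilation with crystal operators, namely $\Theta_m(f_i b) = f_i^m \Theta_m(b)$, in order to reduce the computation to a signature-rule analysis of the tensor expression
\[
\Theta_m(b) = f_{i_1}^m \cdots f_{i_l}^m(b_\lambda^{\otimes m}) \qquad \text{for } b = f_{i_1}\cdots f_{i_l}(b_\lambda).
\]
Throughout I will rely on the structural fact that any extremal weight vector $b_{w\lambda} \in \cO(\lambda)$ satisfies $\min(\varphi_i(b_{w\lambda}), \varepsilon_i(b_{w\lambda})) = 0$ for every $i \in I$---equivalently, each extremal vector is an endpoint of its $i$-string.

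For sufficiency ($m \geq \ell$), I proceed by induction on $l$ with the inductive hypothesis that after applying $f_{i_1}^m \cdots f_{i_k}^m$ to $b_\lambda^{\otimes m}$, the leftmost tensor factor is an extremal weight vector $b_{w_k\lambda}$. The base case $k=0$ is immediate. For the inductive step, if $\varphi_{i_{k+1}}(b_{w_k\lambda}) = 0$ the first factor is unchanged; otherwise extremality forces $\varepsilon_{i_{k+1}}(b_{w_k\lambda}) = 0$, so $b_{w_k\lambda}$ lies atop an $i_{k+1}$-string of length $s \leq \ell$, and the signature rule combined with $m \geq \ell \geq s$ forces $f_{i_{k+1}}^m$ to fully traverse this string, producing $s_{i_{k+1}}(b_{w_k\lambda}) = b_{s_{i_{k+1}} w_k\lambda}$ as the new first factor. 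The last factor is treated by the symmetric argument: write $b = e_{j_1}\cdots e_{j_r}(b_{w_0\lambda})$ so that $\Theta_m(b) = e_{j_1}^m \cdots e_{j_r}^m(b_{w_0\lambda}^{\otimes m})$, and use that $e_i$ preferentially acts on the rightmost factor in the Kashiwara convention. The Bruhat inequality $w \geq w'$ then follows from Kashiwara's Proposition~\ref{prop:theta-extremal}.

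For necessity, suppose $m < \ell$ and pick an $i$-string $\rho$ in $\B(\lambda)$ of length $\ell$. It suffices to exhibit a single $b$ for which $\Theta_m(b)$ fails to have both endpoints extremal. In the favorable case where $\varphi_i(b_\lambda) = \ell$ for some $i$, set $b := f_i(b_\lambda)$; then
\[
\Theta_m(b) = f_i^m \Theta_m(b_\lambda) = f_i^m(b_\lambda^{\otimes m}),
\]
and because $\varepsilon_i(b_\lambda) = 0$, the signature rule forces all $m$ copies of $f_i$ to act consecutively on the leftmost factor, producing first factor $f_i^m(b_\lambda)$. Since $m < \ell = \varphi_i(b_\lambda)$, this vector lies strictly inside the length-$\ell$ string, hence is not extremal, contradicting the assumed decomposition. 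When the length-$\ell$ string lies deeper in $\B(\lambda)$, an analogous construction is obtained by first moving to the top $c$ of such a string along a path of shorter strings (where the sufficiency argument applies) and then obstructing the extremality of the first factor through the same mechanism.

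The main obstacle lies in the inductive step of sufficiency: extremality of the first factor is preserved only in an \emph{all-or-nothing} regime in which either all of the first factor's $+$ symbols are consumed by $f_{i_{k+1}}^m$ or none are. Ruling out the problematic intermediate regime---where $0 < \varepsilon_{i_{k+1}}(R) < s$ for the remainder $R$---requires careful tracking through the signature rule; it is precisely here that the sharp bound $m \geq \ell$ enters decisively and cannot be weakened.
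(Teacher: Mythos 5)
Your strategy mirrors the paper's (reduce to signature-rule analysis of $f_{i_1}^m\cdots f_{i_l}^m(b_\lambda^{\otimes m})$ using $\Theta_m(f_ib)=f_i^m\Theta_m(b)$, and use the explicit formula for $f_i^n(b_1\otimes b_2)$ on a tensor). However, you leave the hard part open: the very intermediate regime $0<\varepsilon_{i_{k+1}}(R)<s$ that you flag at the end is precisely where the argument is not yet a proof. Naively inducting on an arbitrary word $f_{i_1}\cdots f_{i_l}$ for $b$, there is no reason the leftmost factor should fully saturate at each step; the paper handles this by arranging the $f$'s so that at each stage one only ever slides down a full $i$-string from one extremal vector to the next (this is what their citation of \eqref{eq:B} and the reduction to $b'=f_{i_1}^{n_1}\cdots f_{i_s}^{n_s}(b'_0)$ with $b'_0\in\cO(\lambda)$ encodes). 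Your write-up never makes that choice, and the inductive hypothesis ``leftmost factor is extremal after step $k$'' is, as stated, not guaranteed to propagate. A sketch that names its own gap is not yet a proof of the hard direction.

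There is also a secondary problem in your necessity argument. When the longest $i$-string ``lies deeper,'' you propose to reach its top $c$ via a path of shorter strings ``where the sufficiency argument applies'' --- but that invokes exactly the incomplete sufficiency step, and it presupposes such a path exists. The paper avoids this entirely: via a Demazure filtration (equivalently, a convexity argument on $\max_{w\in W}\langle\alpha_i^\vee,w\lambda\rangle$) they show that without loss of generality a longest $i$-string begins at an extremal vector $b_0$, whence $\Theta_m(b_0)=b_0^{\otimes m}$ and $\Theta_m(f_ib_0)=f_i^m(b_0)\otimes b_0^{\otimes (m-1)}$ has a non-extremal leftmost factor whenever $m<\ell$. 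You should supply this reduction rather than routing the counterexample through the unresolved sufficiency induction.
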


\begin{proof}
Fix $i \in I$. Let $m$ be given and suppose $\rho=( b_0 \overset{i}{\to} b_1 \overset{i}{\to}  \dots \overset{i}{\to} b_k) \subset \B(\lambda)$ is any $i$-string  of length $k$. Now, for any $i$-string there exists a filtration by Demazure crystals of $\B(\lambda)$ (see \eqref{eq:Dem filtration}) such that $\{ b_\lambda \}  \subset \B_{ w}(\lambda) \subset \B_{s_i w}(\lambda) \subset \B(\lambda)$ for some $s_i w > w \in W$. In particular, $w$ may be chosen such that the $i$-string connecting the Demazure lowest weight $b_{s_iw} \in \B_{s_iw}(\lambda)$ to $\B_w(\lambda)$ is the $i$-string of maximal length in $\B(\lambda)$. Since $b_{s_i w}$ is extremal, it follows that $e_i^{\varepsilon_i(b_{s_i w})}(b_{s_i w}) = b_w$ is also extremal. Hence, if $b_0$ in $\rho=( b_0 \overset{i}{\to} b_1 \overset{i}{\to}  \dots \overset{i}{\to} b_k) $ is not extremal then $\rho$ can be replaced with another $i$-string $\rho'$ of length $ k'\ge k$ such that $b'_{0} \in \cO(\lambda)$. Thus, without loss of generality we can assume that $b_0$ is an extremal weight vector. 

Now, since $b_0 \in \cO(\lambda)$, then so is $b_k$. Hence by Proposition \ref{prop:theta-extremal} $\Theta_m(\cO(\lambda)) \subset \cO(m\lambda)$, implies that $\Theta_m(\rho) \subseteq \rho \otimes \Theta_{m-1}(\rho) \subseteq \B(\lambda)^{\otimes m}$. So then by  (\ref{formula:nfis}) below 
\begin{align}
\label{formula:nfis}
f_{i}^{n}(b_1 \otimes b_2)  =
\begin{cases}
  f_{i}^{n}(b_1)\otimes b_2  & \text{ if } \varphi_{i}(b_1) \geq \varepsilon_{i}(b_2) + n, \\
  f_{i}^{\varphi_{i}(b_1) -\varepsilon_{i}(b_2)}(b_1) \otimes f_{i}^{n -\varphi_{i}(b_1)+\varepsilon_{i}(b_2)}(b_2) & \text{ if }  \varepsilon_{i}(b_2) \leq \varphi_{i}(b_1) \leq \varepsilon_{i}(b_2) + n, \\
  b_{1} \otimes f_{i}^{n}(b_2) & \text{ if }
 \varepsilon_{i}(b_2) \geq \varphi_{i}(b_1)
\end{cases}
\end{align} one has
\begin{equation}\label{eq:B}
f_i^r\Theta_m(b_0) = f_i^r(b_0 \otimes b_0^{\otimes m-1}) = \begin{cases}
f_i^r(b_0)\otimes b_0^{\otimes m-1} &; r < k,\\
b_k \otimes f_i^{r-k}(b_0^{\otimes m-1}) &; r\geq k.
\end{cases}
\end{equation}
Thus, if $m<k\leq \ell$ then $\Theta_m(b_1) = f_i^m(b_0) \otimes b_0^{\otimes m-1}$ with $f_i^m(b_0) \not\in \cO(\lambda)$. 

Conversely, if $m\geq \ell \geq k$ and $b \in \B(\lambda)$ lies in an $i$-string with $b_0$ extremal, then it follows that  $\Theta_m(b_i) = b_k \otimes f_i^{r-k}(b_1^{\otimes m-1}) $ for all $1\leq i \leq k$ with $b_k \in  \cO(\lambda)$. If however, $b$ does not lie on an $i$-string with $b_0$ extremal, let $b'_0 \in \cO(\lambda)$ be such that $b=f_{i_1}^{n_1} \dots f_{i_s}^{n_s}(b'_0)$. Moreover, since $m \geq \ell$ then by \eqref{eq:B} there exists some subset $\{j_\alpha\} \subset \{i_\beta\}$ such that
\[
\Theta_m(b) = f_{i_1}^{m(n_1)} \dots f_{i_s}^{m(n_s)}({b'_0}^{\otimes m}) = f_{j_1}^{\max} \dots f_{j_t}^{\max}(b'_0) \otimes b'' \in \cO(\lambda) \otimes \B(\lambda)^{\otimes (m-1)}. 
\]
Setting $b_{w_1\lambda}= f_{j_1}^{\max} \dots f_{j_t}^{\max}(b'_0)$ the first claim follows. 

The proof that any $b \in \B(\lambda)$ satisfies $b = b' \otimes b_{w_2\lambda}$ if and only $m \geq \ell$ follows similarly to the first part by considering $\Theta_m(b) \in \B(\lambda)^{\otimes (m-1)} \otimes \B(\lambda)$ and noting that for any $i$-string with $b_k$ extremal we have:
\begin{equation}
e_i^r\Theta_m(b_k) = e_i^r( b_k^{\otimes m-1} \otimes b_k ) = \begin{cases}
 b_k^{\otimes m-1} \otimes e_i^r(b_k) &, r < k,\\
 e_i^{r-k}(b_k^{\otimes m-1}) \otimes b_0 &, r\geq k.
\end{cases}
\end{equation}

\end{proof}
\begin{example} Recalling Example \ref{re:schutzorbit} where 
$\B(\omega_i^{A_n})=\cO(\omega_i^{A_n})$, $\B(\omega_n^{B_n})=\cO(\omega_n^{B_n})$, $\B(\omega_1^{C_n})=\cO(\omega_1^{C_n})$, we see that in each of these cases, all the vertices are extremal weight vectors, and hence $\ell=1$. When $\omega \in \{\omega_1^{B_n},\omega_2^{B_n},\dots,2\omega_n^{B_n} \}$ or $\omega \in\{\omega_2^{C_n}\dots,\omega_n^{C_n} \}$, then the crystal $\B(\omega)$ consisting of the non-spin admissible columns in either type $B_n$ or alternatively in type $C_n$ of length at least two, then we have $\ell =2$. In particular, this implies that Theorem \ref{thm:extremal endpoints}  is an extension of \cite[Proposition 2.9]{lenlub2015} and \cite[ Proposition 3.1.9]{lecouvey2003schensted}.
    \end{example}

\subsection{Keys and Demazure Atoms} Recall that by \cite[Prop. 8.3.2]{kashiwara002cours} and Theorem \ref{thm:extremal endpoints} for any $b \in \B(\lambda)$ we can always find an $m$ such that $\Theta_m(b) = b_{w\lambda}\otimes b' \otimes b_{w'\lambda}$ for some $w\geq w' \in W$ and $b' \in \B(\lambda)^{\otimes (m-2)}$. 
    
\begin{definition}
\label{DefKeys} For $b\in \B(\lambda)$ and any $m$ such that $\Theta_m(b) = b_{w\lambda}\otimes b' \otimes b_{w'\lambda}$, the \newword{right key} $K^{+}(b)$ and \newword{left key} $K^{-}(b)$
of $b$ are defined to be the unique extremal weight vectors,  
\begin{align}\label{eq:leftright}
K^{+}(b)=b_{w\lambda}\text{ and }K^{-}(b)=b_{w'\lambda}.
\end{align}
\end{definition}

Evidently, it immediately follows from Proposition \ref{prop:theta-extremal} that $K^+(b)=K^-(b)$ if and only if $b\in \cO(\lambda)$, with  $K^{+}(b_{w\lambda})=K^{-}(b_{w\lambda})=b_{w\lambda}$ for any $w\in W$. 

\begin{remark}
     In Lascoux's original definition {\cite{LaSchu90keys}}, based on the tableau model, the order of the tensor product is reversed, with the right (resp. left) key appearing as the rightmost (resp. leftmost) factor in $\Theta_m(b)$. 
 Nonetheless, despite the discrepancy with our conventions, we chose to keep Lascoux's terminology in order to stay consistent with the literature.
\end{remark}
 \begin{definition}\label{def:demazatom}{ \cite{kashiwara002cours, LaSchu90keys} }
Given $\lambda\in P^+$ and $w\in W$, define the \newword{Demazure atom} $\mathring{\B}_{w}(\lambda)$ as the subset, 
\begin{align*}
\mathring {\B}_{w}(\lambda):&=\{b\in \B(\lambda): K^+(b)=
b_{w\lambda}\}   \subset \B_w(\lambda)
\end{align*}
and the \newword{opposite Demazure atom} $\mathring{\B}^{w}(\lambda)$ as the subset,
\begin{align*}
\mathring {\B}^{w}(\lambda):&=\{b\in \B(\lambda): K^-(b)=b_{w\lambda}\}  \subset \B^w(\lambda).
\end{align*}
 \end{definition}
 Thus, the right (resp. left) key assigns to each vertex of a crystal the extremal weight vector that generates the smallest possible (resp. opposite) Demazure crystal containing the given vertex. The Demazure atoms consist of all such vectors for whom this minimal Demazure crystal is the same. 
 
It is important to note that generally Demazure atoms and their opposites do not have a Demazure crystal structure. Nonetheless, it is classical fact \cite{kashiwara002cours,LaSchu90keys,sa21a},  and evident from Definition \ref{DefKeys}, that (opposite) Demazure crystals can be built from (opposite) atoms, 
\[
\B_w(\lambda) = \bigsqcup_{\nu \leq w} \mathring\B_\nu(\lambda)=\{b\in B(\lambda): K^+(b)=b_{\nu\lambda},\; \nu\le w \}.
\] 
\begin{remark}
Originally, Lascoux-Sch\"utzenberger \cite{LaSchu90keys} referred to atoms as the \textit{standard basis}.
Keys  in type $A_{n-1}$ have their origin in the $GL(n,\mathbb{C})$ \emph{standard} bases to detect the semistandard tableaux which are \emph{standard} (in the sense of \text{standard monomial theory}) on a Schubert variety. In each Demazure crystal atom there exists exactly one key tableau and the right key map detects the Demazure crystal atom that contains a given semistandard Young tableau \cite[Theorem 3.8]{LaSchu90keys}.
\end{remark}

Finally, although the following theorem is known to experts via the isomorphism between the path model of Lakshmibai--Seshadri and Kashiwara's crystal theory for irreducible highest weight modules \cite{littelmann1995paths, Kashsimilar96,kashiwara002cours}, in the following we provide a new elementary and model independent argument. 

\begin{theorem}\label{thm:Keys-Schutz-commute} 
For any $\lambda \in P^+$ and $b \in \B(\lambda)$, 
\[K^+(\xi (b))=\xi K^-(b).\]
\end{theorem}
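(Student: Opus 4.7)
The plan is to combine three ingredients already established above: $m$-dilation $\Theta_m$ is a virtualization (Theorem \ref{thm:dilation is virt}), any virtualization commutes with the Lusztig--Sch\"utzenberger involution (Proposition \ref{prop:virtualevac}), and on any multiplicity-one isotypic component of $\B(\lambda)^{\otimes m}$ the involution $\xi$ acts by reversing tensor factors with $\xi_{\B(\lambda)}$ applied to each entry (Proposition \ref{prop:commutor-trivial}). The image $\Theta_m(\B(\lambda)) = \F\{b_\lambda^{\otimes m}\}$ is isomorphic to $\B(m\lambda)$, and because $m\lambda$ is the maximal weight of $\B(\lambda)^{\otimes m}$, achieved only by $b_\lambda^{\otimes m}$, this component sits with multiplicity one, so Proposition \ref{prop:commutor-trivial} applies to every element of $\Theta_m(\B(\lambda))$.

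Concretely, I would fix $m \geq \ell$ as in Theorem \ref{thm:extremal endpoints} and write
\[
\Theta_m(b) = K^+(b) \otimes b'' \otimes K^-(b)
\]
for some $b'' \in \B(\lambda)^{\otimes (m-2)}$. Applying $\xi$ and using Proposition \ref{prop:commutor-trivial} on the right-hand side yields
\[
\xi(\Theta_m(b)) = \xi(K^-(b)) \otimes \xi(b'') \otimes \xi(K^+(b)),
\]
while Proposition \ref{prop:virtualevac} applied to the virtualization $\Theta_m$ gives $\xi(\Theta_m(b)) = \Theta_m(\xi(b))$. Combining,
\[
\Theta_m(\xi(b)) = \xi(K^-(b)) \otimes \xi(b'') \otimes \xi(K^+(b)).
\]
Since $\xi(b_{\sigma\lambda}) = b_{w_0\sigma\lambda}$ is again extremal, both endpoints of this decomposition lie in $\cO(\lambda)$. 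By the uniqueness built into Definition \ref{DefKeys} applied to $\xi(b)$, the leftmost extremal tensor factor is forced to be $K^+(\xi(b))$, yielding the desired identity $K^+(\xi(b)) = \xi K^-(b)$.

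The main subtlety is verifying the multiplicity-one hypothesis needed for Proposition \ref{prop:commutor-trivial}; this is immediate here because $m\lambda$ is the top weight of $\B(\lambda)^{\otimes m}$ and is attained only by $b_\lambda^{\otimes m}$. A minor bookkeeping point worth checking is that the Bruhat condition $w \geq w'$ satisfied by the key pair of $b$ becomes $w_0 w' \geq w_0 w$ after applying $\xi$, which is precisely what is needed for $\xi(K^-(b))$ and $\xi(K^+(b))$ to occupy the correct left and right positions in the key decomposition of $\xi(b)$. Once these consistency checks are recorded, the result follows by direct composition of the three preparatory propositions, with no further casework required.
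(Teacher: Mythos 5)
Your proof follows essentially the same route as the paper's: dilation is a virtualization (Theorem \ref{thm:dilation is virt}), virtualization commutes with $\xi$ (Proposition \ref{prop:virtualevac}), and $\xi$ reverses tensor factors on the multiplicity-one component $\F\{b_\lambda^{\otimes m}\}$ (Proposition \ref{prop:commutor-trivial}), after which reading off the endpoints gives the result. The only cosmetic difference is that the paper chooses $m$ large enough so that every factor of $\Theta_m(b)$ is extremal, whereas you only require the two endpoints to be extremal (via Theorem \ref{thm:extremal endpoints}), and your notation $\xi(b'')$ should be read as the reversed block with $\xi_{\B(\lambda)}$ applied entrywise; neither affects correctness.
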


\begin{proof}
Suppose $b \in \B(\lambda)$ and $m$ is such that $\Theta_m(b) = b_{w_1\lambda} \otimes \dots \otimes b_{w_m\lambda} $ for some $w_i \in W$, so that $K^+(b) = b_{w_1\lambda}$. By Theorem \ref{thm:dilation is virt} any $m$-dilation is a virtualization, hence by Proposition \ref{prop:virtualevac} it follows that $\Theta_m \xi = \xi \Theta_m$. So then, applying Proposition \ref{prop:commutor-trivial}  to the connected component $\F\{b_\lambda^{\otimes m}\}= \B(m\lambda)$, from Corollary \ref{cor:multipledilation} we have that:
\begin{align}\Theta_m\xi_{\B(\lambda)}(b)=&
\xi_{\B(m\lambda)}\Theta_m(b) 
= \xi_{\B(m\lambda)}(b_{w_1\lambda} \otimes \dots \otimes b_{w_m\lambda} ) = \xi_{\B(\lambda)} (b_{w_m\lambda}) \otimes \dots \otimes\xi_{\B(\lambda)} (b_{w_1\lambda})\nonumber
\\
=&b_{w_0w_{m}\lambda}\otimes \cdots\otimes
 b_{w_0w_{1} \lambda}.\label{schutzLS}\end{align}
Thus, $K^+(\xi(b))= \xi b_{w_m\lambda} = b_{w_0 w_m\lambda}=\xi K^-(b)$. 
\end{proof}

As an immediate consequence we also obtain the equalities, 
  \begin{align}
\xi{\mathring{\B}}_{w}(\lambda)= \mathring{{\B}}^{w_0w}(\lambda) \qquad \text{and} \qquad \B^{w_0w}(\lambda)=\xi \B_w(\lambda). \label{opBoverlinatom}
\end{align} 

\begin{remark} \label{re:schutzleftrightkeys}
The sequences in  $W$ \eqref{schutzLS} produced by Theorem \ref{thm:Keys-Schutz-commute} are Lakshmibai--Seshadri (L-S) paths (see also \cite[Lemma 3.1 (b)]{littelmann1994LS}). Indeed Theorem \ref{thm:Keys-Schutz-commute} induces an action of the Lusztig--Sch\"utzenberger involution on the crystal of L-S paths ${\mathbf B}(\lambda)$.  More precisely, $\xi(\tau;\mathbf{a})=\xi(\tau_0>\cdots>\tau_r; 0<a_1<\cdots<a_r<1)=(w_0\tau_r>\cdots>w_0\tau_0;0<1-a_r<\cdots<1-a_1<1)$ where $(\tau;\mathbf{a})$ is an L-S path of ${\mathbf B}(\lambda)$.
Within the Lenart--Postnikov alcove path model \cite{lenart2008combinatorial},  \cite[Definition 5.2, Remark 5.3, Corollary 6.2]{lenart2007combinatorics} the initial key and the final key are interchanged by the Lusztig--Sch\"utzenberger involution. 
\end{remark}

\subsection{Virtual Keys}
\label{virtualkeys}
 We are now ready to prove that virtualization preserves left and right keys. These same results were proven by Azenhas--Santos \cite{azenhas2024virtualkeys} using Baker's virtualization from types $C_n$ into $A_{2n-1}$ \cite{baker2000zero}. Here we generalize these results to any virtualization map associated to a Dynkin diagram embedding $\psi: X \hookrightarrow Y$. 

\begin{proposition}\label{thm:virt-dilation-commute}
For any virtualization map $\vt_\psi:\B(\lambda) \to \V \subset \tilde{\B}(\psi(\lambda))$ and any positive integer $m$, the induced map $\vt_\psi^{\otimes m}: \Theta_m(\B(\lambda)) \to  \Theta_m(\V)$ is a virtualization map. Consequently, $\Theta_m \vt_\psi = \vt_\psi^{\otimes m} \Theta_m$.
\end{proposition}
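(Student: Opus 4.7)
The plan is to show that both $\Theta_m \circ \vt_\psi$ and $\vt_\psi^{\otimes m} \circ \Theta_m$ are virtualization maps from $\B(\lambda)$ into the same ambient $\g_Y$-crystal $\tilde{\B}(\psi(\lambda))^{\otimes m}$, carrying the same associated Dynkin-diagram data, and then invoke the uniqueness of virtualizations to conclude they agree. Once equality of the two compositions is established, the claim that $\vt_\psi^{\otimes m}$ restricts to a virtualization $\Theta_m(\B(\lambda)) \to \Theta_m(\V)$ is automatic, since it implies the images coincide.

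First, by Theorem \ref{thm:dilation is virt}, the dilation $\Theta_m$ is itself a virtualization, corresponding to the identity embedding with weight scaling $\omega_i \mapsto m\omega_i$. By Proposition \ref{virtualproperties}, $\vt_\psi^{\otimes m}: \B(\lambda)^{\otimes m} \to \tilde{\B}(\psi(\lambda))^{\otimes m}$ is a virtualization (of $\g_X$-crystals into $\g_Y$-crystals) for the embedding $\psi$. Composing virtualizations composes the induced data on weight lattices, so in either order one obtains a map whose weight-lattice component is $\omega_i^X \mapsto \sum_{j \in \Psi(i)} m\gamma_i\, \omega_j^Y$, and whose Kashiwara operators on the ambient crystal are $\prod_{j\in \Psi(i)} \tilde{f}_j^{m\gamma_i}$, $\prod_{j \in \Psi(i)} \tilde{e}_j^{m\gamma_i}$. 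In particular, both compositions $\Theta_m \circ \vt_\psi$ and $\vt_\psi^{\otimes m} \circ \Theta_m$ define virtual $\g_X$-crystal structures inside $\tilde{\B}(\psi(\lambda))^{\otimes m}$ with identical virtualization data.

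Next, both compositions send the highest weight $b_\lambda$ to $b_{\psi(\lambda)}^{\otimes m}$: indeed, $\Theta_m(\vt_\psi(b_\lambda)) = \Theta_m(b_{\psi(\lambda)}) = b_{\psi(\lambda)}^{\otimes m}$ and $\vt_\psi^{\otimes m}(\Theta_m(b_\lambda)) = \vt_\psi^{\otimes m}(b_\lambda^{\otimes m}) = b_{\psi(\lambda)}^{\otimes m}$. By Remark \ref{rem:virtual hw}, once the embedding $\psi$ and the target ambient crystal are fixed, a virtualization map is uniquely determined by its value on the highest weight vector. Hence $\Theta_m \circ \vt_\psi = \vt_\psi^{\otimes m} \circ \Theta_m$, and restricting to the image yields the desired virtualization $\vt_\psi^{\otimes m}: \Theta_m(\B(\lambda)) \to \Theta_m(\V)$.

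The main bookkeeping hurdle is verifying that the composed Dynkin data really do match independently of the order: that multiplying exponents by $m$ commutes with inserting the virtualization multipliers $\gamma_i$ and splitting tensor factors. If one prefers to avoid invoking uniqueness abstractly, one can instead carry out the direct check on an arbitrary element $b = f_{i_1}^{n_1}\cdots f_{i_r}^{n_r}(b_\lambda)$: applying $\vt_\psi$ then $\Theta_m$ replaces the sequence of operators by $\prod_{j_k \in \Psi(i_k)} \tilde{f}_{j_k}^{m\gamma_{i_k} n_k}$ acting on $b_{\psi(\lambda)}^{\otimes m}$, and applying $\Theta_m$ then $\vt_\psi^{\otimes m}$ produces exactly the same word of operators acting on the same element.
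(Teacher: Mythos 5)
Your proof is correct and follows essentially the same route as the paper: establish that $\vt_\psi^{\otimes m}$ is a virtualization via the tensor-product closure property, that $\Theta_m$ is a virtualization by Theorem \ref{thm:dilation is virt}, and then invoke the uniqueness of virtualizations (Remark \ref{rem:virtual hw}) to identify the two compositions. The only difference is that you spell out explicitly that the composed Dynkin data (exponents $m\gamma_i$ and the weight-lattice scaling) agree regardless of the order of composition — a bookkeeping check the paper leaves implicit but which is needed before appealing to uniqueness for a fixed embedding.
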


\begin{proof}
    We begin by noting that by \cite[Thm. 5.8]{bump2017crystal} for any Dynkin diagram embedding $\psi: X \hookrightarrow Y$ with associated virtualization maps $\vt_\psi: \B \to \V \subset \tilde{\B}$ and $\vt'_\psi: \C \to \mathcal{W} \subset \tilde{\C}$, the map $\vt \otimes \vt': \B \otimes \C \to \V \otimes \mathcal{W}$ is a virtualization map. Thus, it follows that $\vt_\psi^{\otimes m}: \B(\lambda)^{\otimes m} \to  \tilde{\B}(\psi(\lambda))^{\otimes m}$ is indeed a virtualization.

    Now, by Theorem \ref{thm:dilation is virt} we know that $\Theta_m$ is a virtualization on the same Lie type. Since composition of virtualizations is again a virtualization, it follows that both $\Theta_m \vt_\psi: \B(\lambda) \to \F \{ b_{\psi(\lambda)}^{\otimes m}\}$ and $\vt_\psi^{\otimes m} \Theta_m: \B(\lambda) \to \F \{ b_{\psi(\lambda)}^{\otimes m}\}$ are also virtualizations. Moreover, since the choice of Dynkin diagram embedding uniquely determines the virtualization (see Remark \ref{rem:virtual hw}) it follows that $\Theta_m \vt_\psi = \vt_\psi^{\otimes m} \Theta_m$.
\end{proof}

Recall that for any Dynkin diagram embedding $\psi: X \to Y$ we have an induced Weyl group embedding 
\[\psi: W^X \to \widetilde{W}^X:= \langle{s}_i := \prod_{j \in \psi(i)} \tilde{s}_i \; |\; i \in I^X \rangle \subset W^Y = \langle \tilde{s}_j\; | \; j \in I^Y \rangle.\]
Denote by $\cO^X(\lambda)$ the $W^X$-orbit of $b_\lambda$ and by $\widetilde{\cO}^X(\psi(\lambda))$ the $\widetilde{W}^X$-orbit of $b_{\psi(\lambda)}$.

\begin{lemma} \label{lem:orbit-virt}
For any Dynkin diagram embedding $\psi: X \to Y$ with virtualization map $\vt_\psi: \B(\lambda) \to \V \subset \tilde{\B}(\psi(\lambda))$, we have that
\[
\vt_\psi \cO^X(\lambda) = \widetilde{\cO}^X(\psi(\lambda)):= \{ b_{w \psi(\lambda)} \; |\; w \in \widetilde{W}^X\} \subset \cO^Y(\psi(\lambda)).
\]
\end{lemma}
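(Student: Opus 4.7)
The plan is to show that $\vt_\psi$ is equivariant with respect to the Weyl group embedding $\psi: W^X \hookrightarrow \widetilde{W}^X$ of (\ref{weylvirtual}), that is,
\[\vt_\psi(w\cdot b) = \psi(w)\cdot \vt_\psi(b)\]
for every $w \in W^X$ and $b \in \B(\lambda)$. Once this is established, the lemma follows immediately: taking $b = b_\lambda$ and using Remark \ref{rem:virtual hw} so that $\vt_\psi(b_\lambda) = b_{\psi(\lambda)}$, we see that $\vt_\psi(\cO^X(\lambda))$ equals $\{b_{\psi(w)\psi(\lambda)} : w \in W^X\} = \widetilde{\cO}^X(\psi(\lambda))$, while the reverse inclusion is automatic since $\psi: W^X \to \widetilde{W}^X$ is a bijection.

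First I would verify equivariance on each simple reflection, namely $\vt_\psi(s_i(b)) = \tilde{s}_i(\vt_\psi(b))$ for $i \in I^X$, where $\tilde{s}_i = \prod_{j\in\psi(i)}\tilde{s}_j$. Because $\vt_\psi$ is a crystal isomorphism onto $\V$, for $\varphi_i(b) \geq \varepsilon_i(b)$ one has
\[\vt_\psi(s_i(b)) = \vt_\psi\bigl(f_i^{\varphi_i(b)-\varepsilon_i(b)}(b)\bigr) = (f_i^{\mathbf{v}})^{\varphi_i(b)-\varepsilon_i(b)}(\vt_\psi(b)).\]
Unfolding $f_i^{\mathbf{v}}$ via (\ref{axiomvirtual1}) and substituting $\gamma_i\varphi_i = \tilde{\varphi}_j$, $\gamma_i\varepsilon_i = \tilde{\varepsilon}_j$ from (\ref{axiomvirtual2}), the right-hand side becomes $\prod_{j\in\psi(i)}\tilde{f}_j^{\tilde{\varphi}_j-\tilde{\varepsilon}_j}(\vt_\psi(b))$. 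By the first observation following Definition \ref{def:virtual crystal}, the operators $\tilde{f}_j$ for $j \in \psi(i)$ pairwise commute; moreover, applying $\tilde{f}_j$ preserves the values $\tilde{\varphi}_{j'}, \tilde{\varepsilon}_{j'}$ for $j' \in \psi(i) \setminus \{j\}$, a standard consequence of non-adjacency in the Dynkin diagram. Hence the iterated action coincides with the successive application of $\tilde{s}_j(c) = \tilde{f}_j^{\tilde{\varphi}_j(c)-\tilde{\varepsilon}_j(c)}(c)$, which yields $\tilde{s}_i(\vt_\psi(b))$. The symmetric case $\varepsilon_i(b) > \varphi_i(b)$ is handled identically with $\tilde{e}_j$ in place of $\tilde{f}_j$.

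Iterating over a reduced expression $w = s_{i_1}\cdots s_{i_k}$ and using $\psi(s_i) = \tilde{s}_i$ from (\ref{weylvirtual}), equivariance on simple reflections upgrades to $\vt_\psi(w\cdot b) = \psi(w)\cdot \vt_\psi(b)$ for all $w\in W^X$. Specializing $b = b_\lambda$ produces $\vt_\psi(b_{w\lambda}) = b_{\psi(w)\psi(\lambda)}$ for every $w \in W^X$, and the two inclusions combine to give the claim.

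The only genuinely subtle point is the identification of the iterated crystal operator action $\prod_{j\in\psi(i)}\tilde{f}_j^{\tilde{\varphi}_j-\tilde{\varepsilon}_j}(c)$ with the Weyl group action $\tilde{s}_i(c)$; this reduces to the commutativity of the $\tilde{f}_j$ for $j\in\psi(i)$ and to their preserving each other's string lengths, both of which follow from the non-adjacency of the indices in $\psi(i)$ within $Y$ that is built into the definition of a Dynkin diagram embedding.
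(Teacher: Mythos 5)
Your proof is correct and rests on the same mechanism as the paper's: the commutativity of the operators $\tilde{f}_j$ for $j\in\psi(i)$, together with the fact (from non-adjacency of these nodes in $Y$) that applying one of them does not disturb the string lengths of the others, which is precisely what lets $(f^{\mathbf{v}}_i)^k$ unfold as $\prod_{j\in\psi(i)}\tilde{f}_j^{\gamma_i k}$. The difference is one of packaging. The paper computes $\vt_\psi(b_{w\lambda})=\vt_\psi\bigl(\F^*_w(b_\lambda)\bigr)=(f^{\mathbf v}_{i_1})^{\max}\cdots(f^{\mathbf v}_{i_k})^{\max}(b_{\psi(\lambda)})$ and expands directly, staying entirely within the $W$-orbit where $\varepsilon_i=0$ on each intermediate extremal vector, so only the $f^{\max}$ operators are needed. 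You instead first establish the more general $W^X$-equivariance $\vt_\psi(w\cdot b)=\psi(w)\cdot\vt_\psi(b)$ for arbitrary $b\in\B(\lambda)$, handling both signs of $\varphi_i-\varepsilon_i$, and then specialize to $b=b_\lambda$. Your intermediate claim is mildly stronger and makes the equality of sets (not just one inclusion) transparent via the bijectivity of $\psi:W^X\to\widetilde W^X$; the paper's route is slightly more economical for the task at hand. Both are correct and essentially equivalent in content.
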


\begin{proof}
Suppose $b_{w\lambda} \in \cO^X(\lambda)$ for some $w \in W^X$. Then for any reduced expression $s_{i_1}\dots s_{i_k}$ of $w$, we have $f_{i_1}^{\max}\dots f_{i_k}^{\max} (b_\lambda) = b_{w\lambda}$. Hence, 
\[
\vt_\psi(b_{w\lambda}) = (f_{i_1}^{\bf v})^{\max}\dots (f_{i_k}^{\bf v})^{\max} (b_{\psi(\lambda)}) = \prod_{j \in \psi(i_1)} \tilde{f}_j^{\max} \dots  \prod_{j \in \psi(i_k)} \tilde{f}_j^{\max}(b_{\psi(\lambda)})= b_{\psi(w)\psi(\lambda)}, 
\]
where the last equality holds since, by definition, $s_i= \prod_{j \in \psi(i)} \tilde{s}_i \in \tilde{W}^X \subset W^Y$,  in which case $s_{i_1}\dots s_{i_k}$ is a reduced expression for $\psi(w)$ in $\tilde{\cO}^X(\psi(\lambda))$. 
\end{proof}

 We now conclude that a virtualization map preserves left and right keys and thereby embeds type $X$ Demazure crystals and atoms into those of type $Y$. 

\begin{theorem} \label{thm:keys-virt-commute} 
Let $\psi:X \to Y$ be a Dynkin diagram embedding with virtualization map $\vt=\vt_\psi:\B(\lambda) \to \tilde{\B}(\psi(\lambda))$. Then, for any $b \in \B(\lambda)$: 
\[\vt(K^+(b))=K^+(\vt(b)) \text{ and } \vt(K^-(b))=K^-(\vt(b)).\] 
Thus, virtualization embeds Demazure crystals and atoms correspondingly, so that for any $w \in W^X$ we have
\[ \B_w(\lambda) \overset{\vt}{\hookrightarrow} \tilde{\B}_{\psi(w)}(\psi(\lambda)) \qquad \text{ and } \qquad \mathring {\B}_{w}(\lambda)\overset{\vt}{\hookrightarrow} \mathring{\tilde{\B}}_{\psi(w)}(\psi(\lambda))\]
and similarly for their opposites. 
\end{theorem}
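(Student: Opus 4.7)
The strategy is to use the compatibility between virtualization and dilation (Proposition \ref{thm:virt-dilation-commute}) to transport the defining decomposition of the keys of $b$ to the defining decomposition of the keys of $\vt(b)$, and then read off the claim from Lemma \ref{lem:orbit-virt}. The only mild subtlety is choosing a single $m$ that simultaneously realizes the extremal endpoints in both crystals; this is handled by taking $m$ large enough in the sense of Theorem \ref{thm:extremal endpoints} applied to both $\B(\lambda)$ and $\tilde{\B}(\psi(\lambda))$.

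First I would fix $b \in \B(\lambda)$ and choose a positive integer $m$ with
\[ m \;\geq\; \max\{\ell_{\B(\lambda)},\, \ell_{\tilde{\B}(\psi(\lambda))}\}, \]
where the two constants are as in Theorem \ref{thm:extremal endpoints}. By that theorem, there exist $w \geq w' \in W^X$ and $b' \in \B(\lambda)^{\otimes (m-2)}$ such that
\[ \Theta_m(b) \;=\; b_{w\lambda}\otimes b' \otimes b_{w'\lambda}, \]
so by Definition \ref{DefKeys}, $K^+(b) = b_{w\lambda}$ and $K^-(b) = b_{w'\lambda}$. Similarly, since $m \geq \ell_{\tilde{\B}(\psi(\lambda))}$, applying Theorem \ref{thm:extremal endpoints} to $\vt(b) \in \tilde{\B}(\psi(\lambda))$ yields $u \geq u' \in W^Y$ and $\tilde b' \in \tilde{\B}(\psi(\lambda))^{\otimes(m-2)}$ with
\[ \Theta_m(\vt(b)) \;=\; b_{u\psi(\lambda)} \otimes \tilde b' \otimes b_{u'\psi(\lambda)}, \]
so $K^+(\vt(b)) = b_{u\psi(\lambda)}$ and $K^-(\vt(b)) = b_{u'\psi(\lambda)}$.

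Next I would apply $\vt^{\otimes m}$ to the decomposition of $\Theta_m(b)$. By Proposition \ref{thm:virt-dilation-commute} we have $\vt^{\otimes m} \circ \Theta_m = \Theta_m \circ \vt$, and $\vt^{\otimes m}$ acts factorwise on tensors, so
\[ \Theta_m(\vt(b)) \;=\; \vt^{\otimes m}\bigl(b_{w\lambda}\otimes b' \otimes b_{w'\lambda}\bigr) \;=\; \vt(b_{w\lambda}) \otimes \vt^{\otimes(m-2)}(b') \otimes \vt(b_{w'\lambda}). \]
By Lemma \ref{lem:orbit-virt}, $\vt(b_{w\lambda}) = b_{\psi(w)\psi(\lambda)}$ and $\vt(b_{w'\lambda}) = b_{\psi(w')\psi(\lambda)}$, both extremal in $\tilde{\B}(\psi(\lambda))$. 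Comparing this decomposition with the one above and noting that the extremal leftmost and rightmost factors of $\Theta_m(\vt(b))$ are uniquely determined, we conclude $u = \psi(w)$ and $u' = \psi(w')$. Hence
\[ K^+(\vt(b)) \;=\; b_{\psi(w)\psi(\lambda)} \;=\; \vt(K^+(b)), \qquad K^-(\vt(b)) \;=\; b_{\psi(w')\psi(\lambda)} \;=\; \vt(K^-(b)). \]

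Finally I would deduce the statements about Demazure crystals and atoms. If $b \in \B_w(\lambda)$ then $K^+(b) = b_{\nu\lambda}$ for some $\nu \leq w$ in $W^X$, and by the displayed identities $K^+(\vt(b)) = b_{\psi(\nu)\psi(\lambda)}$. Since $\psi$ preserves the strong Bruhat order, $\psi(\nu) \leq \psi(w)$, so $\vt(b) \in \tilde{\B}_{\psi(w)}(\psi(\lambda))$. For atoms, $b \in \mathring{\B}_w(\lambda)$ means $K^+(b) = b_{w\lambda}$, and the same identity gives $K^+(\vt(b)) = b_{\psi(w)\psi(\lambda)}$, i.e.\ $\vt(b) \in \mathring{\tilde{\B}}_{\psi(w)}(\psi(\lambda))$. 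The opposite versions follow identically from the identity for $K^-$. The only step requiring genuine care is verifying that the same $m$ works for both crystals, which is where Theorem \ref{thm:extremal endpoints} is essential; everything else is a direct composition of results already established.
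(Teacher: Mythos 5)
Your proof is correct and follows essentially the same route as the paper: commute $\Theta_m$ and $\vt$ via Proposition \ref{thm:virt-dilation-commute}, push extremal weight vectors through $\vt$ via Lemma \ref{lem:orbit-virt}, and read off the keys. One small remark: your step of separately invoking Theorem \ref{thm:extremal endpoints} on $\tilde{\B}(\psi(\lambda))$ and then matching the two decompositions is unnecessary. Once you know $\Theta_m(\vt(b)) = \vt(b_{w\lambda}) \otimes \vt^{\otimes(m-2)}(b') \otimes \vt(b_{w'\lambda})$ with $\vt(b_{w\lambda}), \vt(b_{w'\lambda})$ extremal (which Lemma \ref{lem:orbit-virt} already gives), Definition \ref{DefKeys} immediately identifies $K^\pm(\vt(b))$ from this decomposition for \emph{that same} $m$, so there is nothing to "verify works for both crystals." The paper instead takes $m$ large enough for a full extremal decomposition $\Theta_m(b) = b_{w_1\lambda}\otimes\cdots\otimes b_{w_m\lambda}$ (via \cite[Prop. 8.3.2]{kashiwara002cours}) and pushes the whole string through; your version uses the weaker two-endpoint decomposition from Theorem \ref{thm:extremal endpoints}, which suffices. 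Your spelled-out deduction of the Demazure crystal and atom embeddings, via the characterization $\B_w(\lambda) = \{b : K^+(b) = b_{\nu\lambda},\ \nu \leq w\}$ and the Bruhat-order-preserving property of $\psi$, is correct and slightly more explicit than the paper's.
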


\begin{proof}
Let $b \in \B(\lambda)$ and $m$ be such that $\Theta_m(b) = b_{w_1\lambda} \otimes \dots \otimes b_{w_m\lambda} \in \cO(\lambda)^{\otimes m}$ with $w_1\ge \cdots \ge w_m$. Then by Proposition \ref{thm:virt-dilation-commute} and Lemma \ref{lem:orbit-virt}, 
\[
\Theta_m \vt(b) = \vt^{\otimes m} \Theta_m(b) = \vt(b_{w_1\lambda}) \otimes \dots \otimes \vt(b_{w_m\lambda})\in \tilde{\cO}^X(\psi(\lambda))^{\otimes m}
\]
 where $\psi(w_1)\ge\cdots\ge\psi(w_m)$.
Thus, $K^+(\vt(b)) = \vt(b_{w_1 \lambda}) = \vt K^+(b)$ and $K^-(\vt(b)) = \vt(b_{w_m \lambda}) = \vt K^-(b)$ as desired.
\end{proof}

\subsection{Computational Consequences of Our Results} 
Lakshmibai--Littelmann \cite{Lakshlittelmann2002LS} identifies the standard monomials of Richardson varieties  with certain L--S paths. This allows an identification
of a standard monomial of degree $m$ of a Richardson variety
with an ordered sequence of $m$ elements in the 
corresponding crystal, satisfying certain
conditions on their left and right keys. This 
condition, from a crystal-theoretic point of view,
can be obtained by characterizing when
the tensor of $m$ elements in $\B_v^w(\lambda):=\B_v(\lambda)\cap \B^w(\lambda)$, $ w\le v$, 
is an element  in  $\B_v^w(m\lambda)$. Furthermore,
since keys are not manifest in every model
of crystals and dilation is not an effective
way to compute keys, one can use virtualization to reduce the computation to simply-laced root systems.

Let $\psi:X \to Y$ be a Dynkin diagram embedding with virtualization map $\vt=\vt_\psi:\B(\lambda) \to \tilde{\B}(\psi(\lambda))$. Assume that one has a combinatorial model for $\B(\lambda)$, $\tilde{\B}(\psi(\lambda))$ and that we have an algorithm for computing $\vt$ and its inverse 
\[\vt^{-1}: \vt(\mathcal{B}(\lambda)) \rightarrow \mathcal{B}(\lambda).\] 

\noindent Additionally, assume that the model for $\tilde{\B}(\psi(\lambda))$ includes algorithms for computing $\xi(b)$ as well as $K^{+}(b)$ and $K^{-}(b)$ for any $b \in \tilde{\B}(\psi(\lambda))$. Then  Proposition \ref{prop:virtualevac}  and Theorem  \ref{thm:keys-virt-commute} imply that, to compute $\xi(b)$ as well as $K^{+}(b)$ and $K^{-}(b)$ for any $b \in \mathcal{B}(\lambda)$, it suffices to compute the virtualization map $\vt$, apply the existing corresponding  algorithm, and apply the inverse map $\vt^{-1}$.

In particular, when $\psi$ is one of the Dynkin diagram embeddings   $ C_n\hookrightarrow A_{2n-1}$ or $B_n\hookrightarrow C_n \hookrightarrow A_{2n-1}$ then, in the tableau models,  $\vt^{-1}$ is explicit and described by the reverse Schensted insertion \cite{   baker2000zero,  azenhas2022symplectic, azenhas2024virtualkeys} and by the un-splitting operation in the case of $B_n\hookrightarrow C_n$. In the next section we explore the latter case in depth.

\section{Applications to Orthogonal Kashiwara--Nakashima Tableaux}
\label{applicationsb}
In this section we follow  \cite{baker00bn,kashiwaranakashima1994crystal,lecouvey2003schensted} and  \cite{hong2002introduction}.
Let $\g = \mathfrak{so}_{2n+1}$ with fundamental weights $\omega^{B_n}_i$ given by  
\[
\omega^{B_n}_i=\begin{cases} (1^i, 0^{n-i}) & i \neq n\\
\frac{1}{2} (1^n) & i= n \end{cases}
\]
so that for any $\lambda \in P^+$ we can write $\lambda = \sum_{i=1}^n{a_i\omega^{B_n}_i}$ for some choice of coefficients $a_i$. 

Hence, we have a decomposition for each part $\lambda_j$ of $\lambda$,
\[
\lambda_j = \sum_{i=j}^{n-1} a_i + \frac{1}{2}a_n= \sum_{i=j}^{n-1} a_i + \frac{1}{2}(a_n-\delta) + \delta \frac{1}{2}
\]
with $\delta = 1$ or $0$ depending on the parity of $a_n$. Consequently, to every such $\lambda$ we can associate a diagram obtained by concatenating a special ``half width'' or spin column of height $n$ and a Young diagram of shape $\mu$ whose $i$-th part is given by $\mu_i:= \lambda_i - \delta \frac{1}{2} \in \mathbb{N}$. It follows then that any highest weight representation $V(\lambda)$ arises as a summand of tensor products of the \emph{standard representation} $V(\omega^{B_n}_1)$ and the \emph{spin representation} $V(\omega_n^{B_n})$.  

For $\g = \mathfrak{sp}_{2n}$ we have $\omega^{C_n}_i=(1^i, 0^{n-i}) $ for all $1 \leq i \leq n$ and, as above, write $\lambda = \sum_{i=1}^n{a_i\omega^{C_n}_i}$ for any $\lambda \in P^{+}$. In this case, dominant integral weights $\lambda$ coincide with partitions of at most $n$ parts, and any highest weight representation $V(\lambda)$ arises as a summand of tensor products of the standard representation $V(\omega_1)$.
In order to combinatorially model the representations of $\mathfrak{so}_{2n+1}$ and $\mathfrak{sp}_{2n}$ we need to define certain fillings of shape $\lambda$, which we now explain.

\subsection{Kashiwara--Nakashima Tableaux}
Consider the alphabets for Lie types $B_n$ and $C_n$ below,
\begin{align*}
\aB_{n} &:= \left\{1 \prec \cdots \prec n \prec 0 \prec \bar n \cdots \prec \bar 1 \right\} \\
\aC_{n} &:= \left\{1 \prec \cdots \prec n \prec \bar n \cdots \prec \bar 1 \right\}.
\end{align*}

Given any filling $T$ of shape $\lambda = (\lambda_1, \lambda_2, \dots,\lambda_k)$ in an alphabet $\mathsf{A}$, the \newword{reading word} of $T$, denoted $w(T)$, is the word in $\mathsf{A}$ obtained by reading the entries of $T$ down each column, from  right  to left.

\begin{definition}\label{def:colTab}
A \newword{type $B_n$ column Kashiwara--Nakashima (KN) tableau of height $k$} is a filling $C$ of the  shape $(1^k)$ for some $1\leq k \leq n$ with entries in $\aB_n$ such that
\begin{itemize}
\item[$(i)$] all entries in $C$ are strictly increasing from top to bottom with the sole exception that $0$ may be repeated, and
\item[$(ii)$] if both $z$ and $\bar z$ appear in $C$ with $z$ in the $p^{th}$ box from the top and $\bar z$ in the $q^{th}$ box from the bottom, then $N(z):= p+q \leq z$.
\end{itemize}
Denote by $\KNb{n}{(\emptyset | 1^k)}$ the set of all type $B_n$ column KN tableaux of shape $(1^k)$.
\end{definition}

We define the \textbf{weight} of a type $B_n$ column Kashiwara--Nakashima tableau $C$ to be the integer vector
\[\wt(C) = (b_1,\dots, b_n),\]
where $b_i$ equals the number of $i$'s minus the number of $\bar{i}$'s that appear in $C$, for each { $1\le i\le n$}.

Whenever both $z$ and $\bar z$ appear in $C$ we say that $C$ \emph{contains the pair} $(z,\bar z)$. When $z=0$, we consider each individual $0$ a pair with itself.

Given $C \in \KNb{n}{(\emptyset | 1^k)}$, consider the subset
\begin{equation}\label{eq:I(C)}
I(C):= \{ z_1,\dots,z_s\} \subseteq \{ 1,\dots, n\}
\end{equation}
consisting of all letters $z_i\prec 0$ such that $(z_i, \bar z_i)$ is a pair in $C$ with $z_{i+1}\prec z_{i}$ for all $1 \leq i \leq s$, or $z_i = 0$. Using this, we construct the set 
\begin{equation}\label{eq:J(C)}
J(C):=(t_1,\cdots, t_s),
\end{equation}
where for each $z_i \in I(C)$ we declare $t_1=\max\{t\in \aB_n: t \prec z_1, t\not\in C\}$
and recursively for $i=2,\cdots,n$ define
\[t_i=\max\{t\in\aB_n: t\not\in C, \bar{t}\not\in C, t\prec t_{i-1}, t\prec z_i\}.\]
In particular, Definition \ref{def:colTab}  guarantees the existence of the set  $J(C)$. 

\begin{definition}
Given $C \in \KNb{n}{(\emptyset | 1^k)}$ with $J(C)$ as in \eqref{eq:J(C)}, let $rC$ and $lC$ be the columns obtained from $C$ by replacing $\bar z_i$ with $\bar t_i$ and $z_i$ with $t_i$, respectively. The \newword{splitting} of $C$ is the tableau of shape $(2^k)$,
\[
\Split(C):=lCrC,
\]
with entries $lC$ in the left column and $rC$ in the right column.
\end{definition}

\begin{example} \label{ex:splitb1}
    Let $k=9\leq n$ and $C \in \KNb{n}{(\emptyset |1^9)}$ be a column with the reading word $w(C)=246900\bar{9}\bar{4}\bar{2}$ and weight $\wt(C)= (0,0,0,0,0,1,0,0,0)$. Then $I(C)=(0,0,9,4,2)$, $J(C)=(8,7,5,3,1)$, $w(lC)=135678\bar{9}\bar{4}\bar{2}$, and $w(rC)=2469\bar{8}\bar{7}\bar{5}\bar{3}\bar{1}$. 
\end{example}

\begin{definition}\label{def:spinTab}
A \newword{spin Kashiwara--Nakashima (KN) tableau of height $n$} is a filling $S$ of  shape $(1^n)$ with entries in $\aC_n$ such that
\begin{itemize}
\item[$(i)$] all entries in $S$ are strictly increasing from top to bottom, and
\item[$(ii)$] $S$ contains no pairs $(z,\bar z)$ for any $z \in \aC_n$.
\end{itemize}
Denote by $\KNb{n}{(1^n|\emptyset)}$ the set of all such spin tableaux.
\end{definition}

We define the \textbf{weight} of a spin Kashiwara--Nakashima tableau $S$ to be the $\frac{1}{2}\mathbb{Z}$-vector
\[\wt(S) = \frac{1}{2}(b_1,\dots, b_n),\]
with the $b_i$'s defined as before. In particular, by condition $(ii)$, the value of $ b_i$ is always $\pm 1$. Similarly, the \newword{splitting} of a spin column $S \in \KNb{n}{(1^n|\emptyset)}$ is the KN tableau $\Split(S) \in \KNb{n}{(\emptyset|1^n)}$ with identical entries as $S$ but whose weight equals $\wt(\Split(S))=2\wt(S)$. In particular, $\Split(\KNb{n}{(1^n|\emptyset)}) \subsetneq \KNb{n}{(\emptyset|1^n)}$.

\begin{remark} Conventionally, spin tableaux (which correspond to the fundamental weight $\omega_n^{B_n}=\frac{1}{2}(1^n)$) are denoted by half-width columns to represent their half integer weights \cite{kashiwaranakashima1994crystal,hong2002introduction, lecouvey2003schensted}. In this article we instead denote the spin tableaux  using \emph{gray columns} of regular width (as opposed to white columns) to increase legibility of the labels.
\end{remark}

\begin{example}Let $n=2$.  The set of all spin Kashiwara--Nakashima tableaux $\KNb{2}{(1^2|\emptyset)}$ and their respective weights are:
    $$
    \spintab{1\\2}
 \quad\left(\frac{1}{2},\frac{1}{2}\right);\quad
   \spintab{1\\\bar2}
  \quad\left(\frac{1}{2},-\frac{1}{2}\right);
  \quad
   \spintab{2\\\bar1}
   \quad\left(-\frac{1}{2},\frac{1}{2}\right);
   \quad
   \spintab{\bar 2\\ \bar1}
   \quad \left(-\frac{1}{2},-\frac{1}{2}\right).
   $$
   Their splittings and respective weights are thus given by the following KN  tableaux of shape $(1^2)$: 
$$
    \tableau{1\\2}
 \quad\left(1,1\right);\quad
   \tableau{1\\\bar2}
  \quad\left(1,-1\right);
  \quad
   \tableau{2\\\bar1}
   \quad\left(-1,1\right);
   \quad
   \tableau{\bar 2\\ \bar1}
   \quad \left(-1,-1\right).
   $$
\end{example}

More generally, we will be interested in tableaux of shapes $\lambda$ comprised of a gray spin column $\mu_0=(1^n)$ alongside an $n$-part partition $\mu=(\mu_1,\dots,\mu_n)$. Thus, we will refer to any $\lambda \in P^+$ as a \newword{spin partition} and write $\lambda = (\mu_0 | \mu)$, where either $\mu_0$ or $\mu$ are allowed to be the empty partition. 

\begin{example} Let $n=3$. Then the spin partition $\lambda=(1^3|4,2,1)$ is represented by the following diagram that concatenates the spin column $(1^3)$ with the partition $(4,2,1)$:
$$ \spintab{{}\\{}\\ {} }\tableau{{}&{}&{}&{}\\ {}& {}\\ {}}.$$
\end{example}

Let $\tilde \mu=(\tilde \mu_1, \dots, \tilde \mu_{\mu_1})$ denote the conjugate partition of $\mu$ so that the $i^{th}$ column of the Young diagram of $\mu$ has height $\tilde \mu_i$. Combining Definitions \ref{def:colTab} and \ref{def:spinTab} yields the following.

\begin{definition}\label{def:KNtableau} Given a spin partition $\lambda= (\mu_0| \mu)$, 
a type $B_n$ \newword{Kashiwara--Nakashima tableau of shape $\lambda$} is a filling $T$ with entries in $\aB_n$ of the diagram of $\lambda$ such that, if $T_{i}$ denotes the $i^{th}$ column of $T$, then:
\begin{itemize}
\item $T_0 \in \KNb{n}{(1^n| \emptyset)},$ 
\item $T_i \in \KNb{n}{(\emptyset |1^{\tilde \mu_i})},$ 
\item every row in $T$ is weakly increasing from left to right and has no repeated zeros, and
\item $\Split(T) := \Split(T_0)\; \Split(T_1)\cdots \Split(T_{\mu_1})$ is a semistandard tableau.
\end{itemize}
 We denote by $\KNb{n}{(\lambda)}= \KNb{n}{(\mu_0| \mu)}$ the set of all type $B_n$ Kashiwara--Nakashima tableaux of shape $\lambda$. 
\smallskip

Given a partition $\mu$, we also define the \newword{type $C_n$ Kashiwara--Nakashima (KN) tableaux of shape $\mu$} as the subset $\KNc{n}{(\mu)}$ of $\KNb{n}{(\emptyset | \mu)}$ consisting of all tableaux with entries exclusively in $\aC_n$. 
\end{definition}

 We note that any $T\in \KNb{n}{(\mu_0 | \mu)}$ satisfies the condition that $\Split(T)\in \KNc{n}{(\mu_0+2\mu)}$.
\smallskip

The \newword{weight} of $T \in \KNb{n}{(\lambda)}$ is the vector $\wt(T) = \sum_{i = 0}^{k} \wt(T_i).$ Consequently, $\wt(\Split(T))=2\wt(T).$
\medskip

The previous definitions extend to skew shapes \cite{lecouvey2003schensted}. A \newword{skew orthogonal tableau} ${T}$  is a skew Young diagram (with potentially a leftmost shaded spin column) filled by letters of $\aB_n$ whose columns are admissible of type $B$ and such that the rows of $\Split(T)$ are weakly increasing from left to right.

\begin{example}\label{ex:splitb2}
Let $n=3$ and consider $T \in \KNb{3}(1^3| 3,2,1)$ as below with weight 
 $\wt(T)=\frac{1}{2}(1,-1,1)+ (0,0,1)+(0,0,0)+(0,0,-1)=\frac{1}{2}(1,-1,1)$. Then $\Split(T) \in \KNc{3}{(7,5,3)}$ is given by,
\begin{align*}
T=\spintab{1\\3\\ \bar 2}\tableau{2&0&\bar 3\\ 3& 0\\ \bar 2}\xrightarrow{\Split}
{\Split(T)}=\Split(T_0)\cdots\Split(T_3) =\tableau{ 1&1&2&2&\bar 3&\bar 3&\bar 3\\ 3&3& 3&3&\bar 2\\\bar 2& \bar 2&\bar 1}.
\end{align*}
In particular, $\wt(\Split(T)) = 2(1,-1,1)+(-1,1,1)+(0,1,1)+(0,-1,-1)+2(0,0,-1)=(1,-1,1).$
\end{example}

\subsection{Kashiwara--Nakashima Crystal Operators}
The crystal graph $\B(\lambda)$ of a highest weight $\mathfrak{so}_n$ or $\mathfrak{sp}_{2n}$-representation $V(\lambda)$ has a realization on KN tableaux. We recall this crystal structure on $\KNb{n}{(\lambda)}$ by first defining it on the crystal of reading words of $\KNb{n}{(\lambda)}$.

Given $T \in \KNb{n}{(\mu_0| \mu)}$ let $w(T) = x_1\cdots x_m|z_1\cdots z_n$ be the reading word of $T$ where $z_1\dots z_n$ is the reading word of the spin column $T_0 \in \KNb{n}{(\mu_0| \emptyset)}$ and $x_1\dots x_m$ the reading word of the non-spin columns $T_1\dots T_{\mu_1}$ of $T$. 
For each $i \in I$, the \newword{signature} $\sigma_i$ of the entries of $w(T)= x_1\cdots x_m|z_1\cdots z_n$ is determined by the following rules:

\begin{itemize}[leftmargin=10mm]
 \item[(B)]
 For non-spin entries $x_j$: $\label{nonspinsignaturebn}\begin{cases} \sigma_{i}(i) = + \hbox{ and } \sigma_{i}(\bar i) = - & \; \hbox{ for } i \neq n, x_j=i,\bar{i}; \\
\sigma_{i}(i+1) = - \hbox{ and } \sigma_{i}(\overline{i+1}) = + & \; \hbox{ for } i \neq n, x_j=i+1,\overline{i+1};\\
\sigma_{n}(n) = ++ , \sigma_{n}(0) = -+ \;\hbox{ and }
\sigma_{n}(\bar n) =  -- & \; \hbox{ for } i=n, x_j=n,0,\bar{n}. 
\end{cases}
$
\smallskip
\item[(C)]
 For spin entries $z_j$:
$\label{Csignature}
\begin{cases} \sigma_{i}(i) = + \hbox{ and } \sigma_{i}(\bar i) = - & \hspace{2mm} \hbox{ for all } i, z_j=i,\bar{i}; \\
\sigma_{i}(i+1) = - \hbox{ and } \sigma_{i}(\overline {i+1}) = + & \hspace{2mm} \hbox{ for } i \neq n,z_j=i+1,\overline{i+1} .\\
\end{cases}
$
\end{itemize}
In particular, $\sigma_i$ assigns nothing to an entry $y$ whenever $y\neq i,i+1,\bar i, \overline{i+1}$ with $i\neq n$ 
or when $y\neq 0,n,\bar n$ with $i=n$. Thus, to any word $w(T)$ we can assign a sequence $\sigma_i(w(T))$ with values in $\{+,-,\cdot\}$, where $\cdot$ is used to keep track of the positions where $\sigma_i$ returns an empty value.

We note that the signature rule for spin entries above is nothing more than the type $C_n$ signature rule. Thus, for a type $C_n$ KN tableau $T$ we can define the signature of $w(T)$ similarly but use (C) for every entry instead.

\begin{definition}
For $T \in \KNb{n}{(\lambda)}$ and $i \in I$, define the \newword{i-pairing} of entries $w(T)$ as follows. We first assign an $i$-pairing on the entries of $\sigma_i(w(T))$ by  iteratively $i$-pairing any unpaired $+$ with the leftmost $-$ to its right provided all other signs $\{+,-\}$ between them have already been $i$-paired, and ignoring any $\cdot$'s in the process. The $i$-pairing on $w(T)$ is the pairing induced on its entries from the $i$-pairing of their corresponding signatures. 
\smallskip

We say an entry $y$ in $w(T)$ is \newword{unpaired} if $\sigma_i(y)=+$ and after $i$-pairing the entries of $w(T)$ as above, it is not $i$-paired with an entry to its right. 
\end{definition}

\begin{definition} \label{def:snale}
For each $i \in I$ and $T \in \KNb{n}{(\lambda)}$, define the \newword{lowering operator} $ f^B_i$ on each $w(T)= x_1\dots x_m|z_1\dots z_n$ as follows:
\begin{itemize}
\item If $w(T)$ has no unpaired entries (with respect to $\sigma_i$), then $ f^B_i(w(T))=0$.
\item Otherwise, $f^B_i$ will act on the \emph{leftmost} unpaired entry $y$ of $w(T)$ via the following assignment:
\begin{itemize}
\item If $y$ lies in a nonspin column then $ f^B_i$ will send $y=i \mapsto i+1$ or $y=\overline{i+1} \mapsto \bar i$ if $ i \neq n$ or send $y=n \mapsto 0$ or $y=0 \mapsto \bar n$ if $i=n$, and leave all other entries unchanged.
\item If $y$ lies in a spin column and $y=i\neq n$ with an entry $z=\overline{i+1}$ below it, then $f^B_i$ will simultaneously send $y=i\mapsto i+1$ and $z=\overline{i+1}\mapsto \bar i$. If instead $y=i\neq n$ but $\overline{i+1}$ is not contained in the spin column, then $f^B_i(w(T)) =0$. Lastly, if $i=n$ then $f^B_n$ maps $y=n \mapsto \bar n$, leaving all other entries unchanged.
\end{itemize}
\end{itemize}
Similarly, for $T \in \KNc{n}{(\mu)}$ the lowering operator $ f^C_i$ acts on the leftmost unpaired entry of $w(T)$ as in the non-spin case above with the only change being that for $i=n$, $ f^C_n$ maps $n\mapsto \bar n$. In the non-spin case, $f_n^Bf_n^B(n)=f_n^C(n)$.
\end{definition}

Thus, define the lowering operators $f_i$ on $T \in \KNb{n}(\lambda)$ as the induced operators from those on $w(T)$, where if the entry $y$ in $w(T)$ is modified under the action of $f_i$, then the corresponding entry in $T$ is modified in the same way. It is a theorem of Kashiwara and Nakashima \cite{kashiwaranakashima1994crystal} that these operators endow the sets $\KNb{n}(\lambda)$ (resp. $\KNc{n}(\lambda)$) with the structure of an $\mathfrak{so}_{2n+1}$-crystal (resp. $\mathfrak{sp}_{2n}$-crystal). Thus, we write $\B(\mu_0|\mu)$ for the $\mathfrak{so}_{2n+1}$-crystal on $\KNb{n}(\mu_0|\mu)$ with highest weight $\lambda=(\mu_0|\mu)$, and  $\C(\mu)$ for the $\mathfrak{sp}_{2n}$-crystal on $\KNc{n}(\mu)$ with highest weight $\lambda=\mu$.

\begin{example} \label{BCwords}
Suppose $n=8$ with
$T = \hackcenter{\tableau{8\\0\\0}}$ 
so that $w(T) = 800$. 
Below we see the action of $f_8^B$ on
$w(T)$ and $T$. 
At each step the corresponding signatures $\sigma_8$ are provided, with the $8$-paired entries connected in red, and the unpaired entry circled on which $f_8^B$ acts in blue. 
\[
\hackcenter{\begin{tikzpicture}[xscale=.7]
\node at (-2,0) {$w(T)$};
\node at (-2,.5){$\sigma_8(w)$};
\node at (-2,-.5) {$T$};
\node at (0,.5) {$++$};
\node at (0,0) {8};
\node at (1,.5) {$-+$};
\node at (1,0) {0};
\node at (1,-1){$\tableau{8\\0\\0}$};
\node at (2,.5) {$-+$};
 \node at (2,0) {0}; 
 \draw[red](.2,.65)--(.2,.75)--(.8,.75)--(.8,.65);
  \draw[red](1.2,.65)--(1.2,.75)--(1.8,.75)--(1.8,.65);
  \draw[blue] (-.2,.5) circle (6pt); 
    \draw[blue] (0,0) circle (6pt); 
     \draw[blue] (1,-.54) circle (6pt); 
\end{tikzpicture}}
\xrightarrow{f^B_8}
\hackcenter{
\begin{tikzpicture}[xscale=.7]
\node at (0,.5) {$-+$};
\node at (0,0) {0};
\node at (1,.5) {$-+$};
\node at (1,0) {0};
\node at (1,-1){$\tableau{0\\0\\0}$};
\node at (2,.5) {$-+$};
 \node at (2,0) {0}; 
 \draw[red](.2,.65)--(.2,.75)--(.8,.75)--(.8,.65);
  \draw[red](1.2,.65)--(1.2,.75)--(1.8,.75)--(1.8,.65);
  \draw[blue] (2.2,.5) circle (6pt); 
   \draw[blue] (2,0) circle (6pt); 
   \draw[blue] (1,-1.45) circle (6pt); 
\end{tikzpicture}}
\xrightarrow{f^B_8}
\hackcenter{
\begin{tikzpicture}[xscale=.7]
\node at (0,.5) {$-+$};
\node at (0,0) {0};
\node at (1,.5) {$-+$};
\node at (1,0) {0};
\node at (1,-1){$\tableau{0\\0\\\bar8}$};
\node at (2,.5) {$--$};
 \node at (2,0) {$\bar8$}; 
 \draw[red](.2,.65)--(.2,.75)--(.8,.75)--(.8,.65);
  \draw[red](1.2,.65)--(1.2,.75)--(1.8,.75)--(1.8,.65); 
\end{tikzpicture}}
\xrightarrow{f^B_8}
0.
\]
\end{example}

\begin{example} Continuing with Example \ref{ex:splitb2}, with $n=3$ and $T \in \KNb{3}(1^3| 3,2,1)$. As above, we illustrate the action of $f_3^B$ on $\sigma_3(w)$, $w(T)$, and $T$.
\[
\hackcenter{\begin{tikzpicture}[xscale=1]
\node at (-.5,.5){$\sigma_3(w)$};
\node at (-.5,0) {$w(T)$};
\node at (-.5,-.5) {$T$};
\node at (1,.5) [scale=.8]{$--$};
\node at (1,0) {$\bar 3$};
\node at (1.5,.5) [scale=.8]{$-+$};
\node at (1.5,0) {0};
\node at (2,.5) [scale=.8]{$-+$};
 \node at (2,0) {0}; 
\node at (2.5,.5) {$\cdot$};
 \node at (2.5,0) {2};
 \node at (3,.5) [scale=.8]{$++$};
 \node at (3,0) {3}; 
 \node at (3.5,.5) {$\cdot$};
 \node at (3.5,0) {$\bar2$}; 
 \node at (4,.5) {$|$};
 \node at (4,0) {$|$};
 \node at (4.5,.5) {$\cdot$};
 \node at (4.5,0) {$1$}; 
 \node at (5,.5) {$+$};
 \node at (5,0) {3}; 
 \node at (5.5,.5) {$\cdot$};
 \node at (5.5,0) {$\bar 2$}; 
\node at (3,-1){$\spintab{1\\3\\ \bar 2}\tableau{2&0&\bar 3\\ 3& 0\\ \bar 2}$};
  \draw[red](1.6,.65)--(1.6,.75)--(1.9,.75)--(1.9,.65);
  \draw[blue] (2.1,.5) circle (4pt); 
    \draw[blue] (2,0) circle (6pt); 
     \draw[blue] (3.22,-1) circle (6pt); 
\end{tikzpicture}}
\xrightarrow{f_3^B}
\hackcenter{\begin{tikzpicture}[xscale=1]
\node at (1,.5) [scale=.8]{$--$};
\node at (1,0) {$\bar 3$};
\node at (1.5,.5) [scale=.8]{$-+$};
\node at (1.5,0) {0};
\node at (2,.5) [scale=.8]{$--$};
 \node at (2,0) {$\bar 3$}; 
\node at (2.5,.5) {$\cdot$};
 \node at (2.5,0) {2};
 \node at (3,.5) [scale=.8]{$++$};
 \node at (3,0) {3}; 
 \node at (3.5,.5) {$\cdot$};
 \node at (3.5,0) {$\bar2$}; 
 \node at (4,.5) {$|$};
 \node at (4,0) {$|$};
 \node at (4.5,.5) {$\cdot$};
 \node at (4.5,0) {$1$}; 
 \node at (5,.5) {$+$};
 \node at (5,0) {3}; 
 \node at (5.5,.5) {$\cdot$};
 \node at (5.5,0) {$\bar 2$}; 
\node at (3,-1){$\spintab{1\\3\\ \bar 2}\tableau{2&0&\bar 3\\ 3& \bar 3\\ \bar 2}$};
  \draw[red](1.6,.65)--(1.6,.75)--(1.9,.75)--(1.9,.65);
  \draw[blue] (2.9,.5) circle (4pt); 
    \draw[blue] (3,0) circle (6pt); 
     \draw[blue] (2.75,-1) circle (6pt); 
\end{tikzpicture}}
\]
\[
\xrightarrow{f_3^B}
\hackcenter{\begin{tikzpicture}[xscale=1]
\node at (1,.5) [scale=.8]{$--$};
\node at (1,0) {$\bar 3$};
\node at (1.5,.5) [scale=.8]{$-+$};
\node at (1.5,0) {0};
\node at (2,.5) [scale=.8]{$--$};
 \node at (2,0) {$\bar 3$}; 
\node at (2.5,.5) {$\cdot$};
 \node at (2.5,0) {2};
 \node at (3,.5) [scale=.8]{$-+$};
 \node at (3,0) {0}; 
 \node at (3.5,.5) {$\cdot$};
 \node at (3.5,0) {$\bar2$}; 
 \node at (4,.5) {$|$};
 \node at (4,0) {$|$};
 \node at (4.5,.5) {$\cdot$};
 \node at (4.5,0) {$1$}; 
 \node at (5,.5) {$+$};
 \node at (5,0) {3}; 
 \node at (5.5,.5) {$\cdot$};
 \node at (5.5,0) {$\bar 2$}; 
\node at (3,-1){$\spintab{1\\3\\ \bar 2}\tableau{2&0&\bar 3\\ 0& \bar 3\\ \bar 2}$};
  \draw[red](1.6,.65)--(1.6,.75)--(1.9,.75)--(1.9,.65);
  \draw[blue] (3.1,.5) circle (4pt); 
    \draw[blue] (3,0) circle (6pt); 
     \draw[blue] (2.75,-1) circle (6pt); 
\end{tikzpicture}}
\xrightarrow{f_3^B}
\hackcenter{\begin{tikzpicture}[xscale=1]
\node at (1,.5) [scale=.8]{$--$};
\node at (1,0) {$\bar 3$};
\node at (1.5,.5) [scale=.8]{$-+$};
\node at (1.5,0) {0};
\node at (2,.5) [scale=.8]{$--$};
 \node at (2,0) {$\bar 3$}; 
\node at (2.5,.5) {$\cdot$};
 \node at (2.5,0) {2};
 \node at (3,.5) [scale=.8]{$--$};
 \node at (3,0) {$\bar 3$}; 
 \node at (3.5,.5) {$\cdot$};
 \node at (3.5,0) {$\bar2$}; 
 \node at (4,.5) {$|$};
 \node at (4,0) {$|$};
 \node at (4.5,.5) {$\cdot$};
 \node at (4.5,0) {$1$}; 
 \node at (5,.5) {$+$};
 \node at (5,0) {3}; 
 \node at (5.5,.5) {$\cdot$};
 \node at (5.5,0) {$\bar 2$}; 
\node at (3,-1){$\spintab{1\\ 3\\ \bar 2}\tableau{2&0&\bar 3\\ \bar 3& \bar 3 \\ \bar 2}$};
  \draw[red](1.6,.65)--(1.6,.75)--(1.9,.75)--(1.9,.65);
  \draw[blue] (5,.5) circle (4pt); 
    \draw[blue] (5,0) circle (6pt); 
     \draw[blue] (2.3,-1) circle (6pt); 
\end{tikzpicture}}
\]
\[
\xrightarrow{f_3^B}
\hackcenter{\begin{tikzpicture}[xscale=1]
\node at (1,.5) [scale=.8]{$--$};
\node at (1,0) {$\bar 3$};
\node at (1.5,.5) [scale=.8]{$-+$};
\node at (1.5,0) {0};
\node at (2,.5) [scale=.8]{$--$};
 \node at (2,0) {$\bar 3$}; 
\node at (2.5,.5) {$\cdot$};
 \node at (2.5,0) {2};
 \node at (3,.5) [scale=.8]{$--$};
 \node at (3,0) {$\bar 3$}; 
 \node at (3.5,.5) {$\cdot$};
 \node at (3.5,0) {$\bar2$}; 
 \node at (4,.5) {$|$};
 \node at (4,0) {$|$};
 \node at (4.5,.5) {$\cdot$};
 \node at (4.5,0) {$1$}; 
 \node at (5,.5) {$-$};
 \node at (5,0) {$\bar 3$}; 
 \node at (5.5,.5) {$\cdot$};
 \node at (5.5,0) {$\bar 2$}; 
\node at (3,-1){$\spintab{1\\ \bar 3\\ \bar 2}\tableau{2&0&\bar 3\\ \bar 3& \bar 3 \\ \bar 2}$};
  \draw[red](1.6,.65)--(1.6,.75)--(1.9,.75)--(1.9,.65);
\end{tikzpicture}}
\]
\end{example}

\subsection{Symplectic Jeu de Taquin}\label{sjdt}
In this section we briefly recall the symplectic jeu de taquin procedure \cite{sheats1999symplectic,lecouvey2002schensted, sa21b}.

\begin{definition}
    Given a column $C \in \KNc{n}{(1^k)}$ with $\Split(C) = lCrC$, let $\Phi(C)$ be the tableau of shape $(1^k)$ obtained by combining the unbarred entries from $lC$ and barred entries from $rC$.
\end{definition} 
{ In particular, $\Phi(C) = C$ if and only if  $C$ does not have symmetric entries.   The column $\Phi(C)$ is a \emph{co-admissible column} and the procedure to form $\Phi(C)$ from $C$ is reversible (for details see \cite[Section 2.2]{lecouvey2002schensted, sa21a} or \cite[Section 2]{azenhas2024virtualkeys}). In particular, every column on the alphabet $[n]$ is simultaneously admissible and co-admissible. The map $\Phi$ is a bijection between admissible and co-admissible columns of the same height.}
\begin{example}
    Consider the admissible column $C$ of shape $(1^3)$ with reading word $w(C)=23\bar2$, so that $w(lC)= 13\bar 2$, $w(rC)=23\bar1$. Then, $\Phi(C)$ is the column of shape $(1^3)$ with reading word $w(\Phi(C))=13\bar1$. { Also consider the column $C'$ with reading word $w(C')=24\bar2$ which is simultaneously admissible and co-admissible. Then $\Phi(C')$ is the column of shape $(1^3)$ with reading word $w(\Phi(C'))=14\bar1$ and $w(\Phi^{-1}(C'))=34\bar3$. }
\end{example}

 Let $T$ be a punctured symplectic KN skew tableau  \footnote{In the literature  the punctured cell is often assigned with the symbol $*$. Here is just a blank cell as in JDT, and the symbol $*$ in a box indicates a (potentially) filled box.} with two columns $C_1$ and $C_2$ with the puncture
in $C_1$  and $\Split(T)=l C_1 rC_1l C_2rC_2$.
Let $\alpha$ be the entry under the puncture of $rC_1$, and $\beta$ the entry to the right of the puncture of $rC_1$, that is, locally, the tableaux $T=C_1C_2$ and $\Split(T) = l C_1 rC_1l C_2rC_2$ look like:
\[
\Skew(1: b | 0: a, \ast)
\qquad
\xrightarrow{\Split}
\qquad
\Skew(2: \beta, \ast | 0: \ast,\alpha, \ast, \ast)\;,
\]
where either $a$ or $b$ may not exist. The elementary steps of the symplectic jeu de taquin, or SJDT  for short, are the following:

\begin{itemize} \item[A.] Suppose $a$ exists. If $\alpha\leq \beta$ or $\beta$ does not exist,  then the puncture in $T$ will change its position with the cell beneath it. This is called a \emph{vertical slide}.
\[
\tableau{& b\\ a&\ast}
\qquad
\mapsto
\qquad
\tableau{a& b\\ &\ast}
\]

\item[B.] Suppose $b$ exists. If $\alpha> \beta$ or $\alpha$ does not exist we say the slide is \emph{horizontal}.
Let $C_1'$ and $C_2'$ be the columns of $T$ obtained after the slide. 
\[
\tableau{& b\\ a&\ast}
\qquad
\mapsto
\qquad
\tableau{b' & \\ a' &\ast}
\]
There are two cases depending on the sign of $\beta$:

\begin{enumerate}
    \item
 If $\beta$ is barred then $b=\beta$ and $a'$ and $b'$ are obtained by setting $C_1':=\Phi^{-1}\left(\Phi(C_1)\sqcup \Skew(0:\beta)\right)$ and $C_2':=C_2\setminus\Skew(0:\beta)$.
\item If $\beta$ is unbarred, then set  $C_1':=C_1\sqcup \Skew(0:\beta)$ with $a'=a$, $b'=\beta$, and $C_2':=\Phi^{-1}(\Phi(C_2)\setminus \Skew(0:\beta))$. This case, however, may yield a column $C_1'$ that is no longer a type $C$ KN tableau. When this occurs we modify $C_1'$ as follows:
\subitem{-} Let $i$ be the smallest value such that $C_1'$ contains the pair $(i,\bar i)$ with $N(i)>i$.
\subitem{-} Construct $C_1''$ from $C_1'$ by removing the topmost and bottommost cells from $C_1'$ and labeling the boxes linearly with the entries from $C_1'$, skipping over $i$ and $\bar i$.
\subitem{-} Replace $C_1'$ with $C_1''$. 
\end{enumerate}
\end{itemize}
Iterative application of this procedure will eventually yield a left justified shape, that is for every puncture both $a$ and $b$ will not exists. When this occurs, \textit{symplectic jeu de taquin} ends. 

\begin{remark}
It is important to note that SJDT is reversible by pushing the filled cells to the right and the empty cells to the left. The reverse process is denoted RSJDT.
\end{remark}

The \newword{rectification} $\operatorname{rect}_C(T)$ of a a symplectic KN skew tableau $T$ is the tableau of partition shape obtained by iterated applications of SJDT. In particular, rectification is independent of the order in which the inner corners are filled \cite[Corollary 6.3.9]{lecouvey2002schensted}.

\begin{remark}\label{re:insertionjeutaquin} If the columns $C_1$ and $C_2$ do not have barred entries then the SJDT coincides with the \textit{jeu de taquin} on semistandard Young tableaux.
\end{remark}

\begin{remark} \label{re:tensorinsertionscheme}The rectification map on words, defined  by the $B_n$-spin insertion or $B_n$($C_n$)-insertion scheme, $w\otimes w'\mapsto [\emptyset\leftarrow w\leftarrow w']$ is a  $B_n$ ($C_n$)-crystal isomorphism \cite{baker00bn,lecouvey2003schensted}. In type $C_n$ it coincides with the rectification defined by SJDT on the diagonal skew  tableau with  reading word $w\otimes w'$.
\end{remark}

\begin{example}
The following computation shows an example of rectification using SJDT.
\[
\hackcenter{\tableau{&&3\\&3&\bar 3\\1&\bar 3}}\;\;
\mapsto\;
\hackcenter{\tableau{&2\\&3&\bar 2\\1&\bar 3}}\;\;
\mapsto\;
\hackcenter{\tableau{&2&\bar 2\\&3&\\1&\bar 3}}\;\;
\mapsto\;
\hackcenter{\tableau{&2&\bar 2\\1&3&\\&\bar 3}}\;\;
\mapsto\;
\hackcenter{\tableau{&2&\bar 2\\1&3&\\\bar 3}}\;\;
\mapsto\;
\hackcenter{\tableau{1&2&\bar 2\\&3&\\\bar 3}}\;\;
\mapsto\;
\hackcenter{\tableau{1&2&\bar 2\\3\\\bar 3}}\;\;
\] 
We spell out a few steps in detail. To compute the first step, we begin by splitting the second and third column with $a=b=3$:
\[
\hackcenter{
\tableau{&3\\3&\bar3\\\bar 3}
}
\xrightarrow{\Split}
\hackcenter{
\tableau{&&2&3\\2&3&\bar3&\bar2\\\bar3&\bar 2}}.
\]
Hence, $\alpha=3$ and $\beta=2$ with $\beta$ unbarred, so this is case B(2). Thus, 
\[C_1'= \tableau{\\3\\\bar 3}\sqcup \tableau{2} = \tableau{2\\3\\ \bar3}
\qquad \text{and} \qquad
C_2' = \Phi^{-1}\left(\tableau{2\\\bar2} \setminus \tableau{2}\right) = \tableau{\\ \bar2}.\]
Hence, replacing the second column of $T$ by $C_1'$ and the third column with $C_2'$ yields $\tableau{&2\\&3&\bar 2\\1&\bar 3}$.

Similarly, in step four we first split the first and second columns of the tableau:
\[
\hackcenter{
\tableau{&2\\1&\bar3\\&\bar3}
}\;\;
\xrightarrow{\Split}\;
\hackcenter{
\tableau{&&1&2\\1&1&2&3\\&&\bar3&\bar1}
}
\]
so that $\alpha$ is nonexistent and $\beta = \bar 3$. Since $\beta$ is barred, this falls under case B(1). Thus, 
\[C_1'=\Phi^{-1}\left(\tableau{1\\} \sqcup \tableau{ \\ \bar3}\right) = \tableau{1\\ \bar 3}
\qquad \text{and} \qquad
C_2' = \tableau{2\\ \bar 3 \\ \bar 3}\setminus \tableau{\\ \\ \bar 3} = \tableau{2\\\bar3 \\}
\]
which yields the skew-tableau $\hackcenter{\tableau{&2&\bar 2\\1&3&\\\bar 3}}$.
\end{example}

\subsection{Virtualization and Jeu de Taquin in Type B}
We build on the virtualization procedure described in \cite{pappe2023promotion}  where virtual crystals are provided for spin and vector (or standard) representation
of type $B_n$ into type $C_n$.  They are crystal isomorphic to those obtained in the tableau crystal by the
splitting
map, defined above,  viewed as a virtualization map from tableau crystals of type $B_n$ to ones of type $C_n$. Recall that for a chosen Dynkin embedding the virtualization map is unique    and by
 Proposition \ref{virtualproperties} virtualization is closed for the tensor product. 

In this section we will use the Dynkin diagram embeddings $\psi_{BC}:B_n \to C_n$ and $\psi_{CB}:C_n \to B_n$
considered in \cite{fujita18, pappe2023promotion} (see Table \ref{fig:gamma}). It follows from their definition that the action of both these virtualization coincides on the spin and standard representation. Since virtualization commutes with the tensor product (see Proposition \ref{virtualproperties}), then the virtualizations in \cite{fujita18, pappe2023promotion} are the same. Thus, we will use both definitions intercheangably and denote them $\vt_{BC}$ and $\vt_{CB}$.

We will also make ample usage of the \emph{symplectic insertion} \cite{baker00bn, lecouvey2002schensted, KimInsertion} and \emph{orthogonal insertion} \cite{lecouvey2003schensted, KimInsertion} on skew tableaux, referring the readers to these sources for details. 

The following proposition can also be found in \cite[Sec. 6]{fujita18}. 

\begin{proposition}
The composition of the virtualization maps $\vt_{BC}$ and $\vt_{CB}$ yield $2$-dilations. Namely, 
\[ \vt_{CB} \circ \vt_{BC} = \Theta_2^B \qquad \text{and} \qquad
\vt_{BC} \circ \vt_{CB} = \Theta_2^C.
\]
\end{proposition}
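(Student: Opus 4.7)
The plan is to combine Theorem \ref{thm:dilation is virt}, which identifies $\Theta_m^X$ as the virtualization associated to the self-embedding $\omega_i \mapsto m\omega_i$ of type $X$, with Remark \ref{rem:virtual hw}, which guarantees that a virtualization is uniquely determined by its underlying Dynkin embedding. Thus, to establish $\vt_{CB} \circ \vt_{BC} = \Theta_2^B$ it suffices to verify that the left-hand side is a virtualization whose associated Dynkin embedding is exactly the self-embedding $\omega_i^{B_n} \mapsto 2\omega_i^{B_n}$; the identity $\vt_{BC} \circ \vt_{CB} = \Theta_2^C$ will then follow by a symmetric argument.

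My first step would be to observe that the composition of two virtualizations is itself a virtualization, with the scaling factors of the composition given by the product of the individual scaling factors. Reading off the embeddings from Table \ref{fig:gamma}, both $\psi_{BC}$ and $\psi_{CB}$ have $\Psi(i) = \{i\}$, so for any $b \in \B(\lambda)$ and $i \in I^{B_n}$ one computes
\[
(\vt_{CB} \circ \vt_{BC})(f_i^B b) \;=\; \vt_{CB}\!\left((\tilde f_i^C)^{\gamma_i^{BC}} \vt_{BC}(b)\right) \;=\; (\tilde{\tilde f}_i^B)^{\gamma_i^{BC}\gamma_i^{CB}} (\vt_{CB} \circ \vt_{BC})(b),
\]
and analogously for $e_i^B$; the weight map multiplies correspondingly. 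This exhibits $\vt_{CB} \circ \vt_{BC}$ as a virtualization with scaling factors $\gamma_i^{BC}\gamma_i^{CB}$.

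To conclude, I would consult Table \ref{fig:gamma} once more: one has $\gamma_i^{BC} = 2$ for $i < n$ and $\gamma_n^{BC} = 1$, while $\gamma_i^{CB} = 1$ for $i < n$ and $\gamma_n^{CB} = 2$. Hence the composed scaling factors $\gamma_i^{BC}\gamma_i^{CB}$ equal $2$ for every $i \in I^{B_n}$, which matches precisely the self-embedding $\omega_i^{B_n} \mapsto 2\omega_i^{B_n}$ underlying $\Theta_2^B$ via Theorem \ref{thm:dilation is virt}. The uniqueness of virtualizations (Remark \ref{rem:virtual hw}) then forces $\vt_{CB} \circ \vt_{BC} = \Theta_2^B$, and swapping the roles of $B_n$ and $C_n$ yields $\vt_{BC} \circ \vt_{CB} = \Theta_2^C$. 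The only point requiring care is the multiplicativity of the scaling factors under composition, but this is essentially immediate from Definition \ref{def:virtual crystal}; once established, no further obstacle remains.
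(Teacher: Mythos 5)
Your proposal is correct and takes essentially the same approach as the paper: both rely on Theorem \ref{thm:dilation is virt} and Remark \ref{rem:virtual hw} to reduce to comparing scaling factors, then read off $\gamma_i^{BC}\gamma_i^{CB}=2$ for every $i\in I^{B_n}$ from Table \ref{fig:gamma} (the paper simply phrases this as a direct check that the composed crystal operators act by $f_i^2$ in both cases).
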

\begin{proof}
    From Theorem \ref{thm:dilation is virt} we know that any $m$-dilation is a virtualization. Since the compositions of virtualizations is again a virtualization and by Remark \ref{rem:virtual hw}, virtualizations map highest weight vectors to highest weight vectors, it suffices to show that the images of the crystal operators under both maps coincide.

    So suppose $\vt_{BC}: \B \to \V \subset \tilde{B}$ and $\vt_{CB}: \tilde{\B} \to \tilde{\V}$. Then for any $b \in \B$ with $f_i(b) \neq 0$ if $i\neq n$, we have that:
    \[\vt_{CB}( \vt_{BC}(f_i(b)) )= \vt_{CB}(f_i^2(\vt_{BC}(b))) = f_i^2 (\vt_{CB}(\vt_{BC}(b))).\] If instead $i=n$, then:
    \[\vt_{CB}( \vt_{BC}(f_n(b)) )= \vt_{CB}(f_n(\vt_{BC}(b))) = f_n^2 (\vt_{CB}(\vt_{BC}(b))).\]
Thus,$\vt_{CB} \circ \vt_{BC}$ coincides with the $2$-dilation map $\Theta_2$ in type $B_2$.  The reverse composition follows similarly.
\end{proof}

It is important to note that although the images of $\Theta_2(\B(\lambda))$ and $\vt_{BC}(\B(\lambda))$ coincide as graphs, they are not equivalent as crystal embeddings. Indeed, $\Theta_2$ yields an embedding from type $B_n$ onto itself, unlike $\vt_{BC}$ which embeds $\B(\lambda)$ onto a type $C_n$ crystal.
 
\begin{example}\label{ex:spinBC} Let $n=1$. Consider the $B_1$-spin crystal   $  \B(1|\emptyset):= \;\;
   \hackcenter{ \spintab{1}
   \xrightarrow{f_1^B}
\spintab{\bar 1}}.$

Then, its 2-dilation yields the embedding
$
\Theta_2(\B(1|\emptyset)):= \boxed{1} \;\;{\color{orange}\xrightarrow{f_1^Bf_1^B }}\;\;\boxed{\overline{1}}
\subseteq
\boxed{1} \;\;{\color{orange}\xrightarrow{f^B_1}}\;\;\boxed{{0}}\;\;{\color{orange}\xrightarrow{f^B_1}}\;\;\boxed{\overline{1}}=: \B(0|1).
$
On the other hand, $\vt_{BC}$ yields the $C_1$-crystal isomorphism $\vt_{BC}(\B(1|\emptyset)) = \C(1):=\boxed{1} \;\;{\xrightarrow{f_1^C }}\;\;\boxed{\overline{1}}$.
\end{example}

\begin{figure}[ht]
  \begin{tikzcd}[cramped,column sep=small, row sep=1.7em]
 &&&\zeroonebartwo \arrow[dr, blue, "1" blue]&&&&\\
 & \fcolorbox{purple}{white}{\oneonebartwo} \arrow[r, blue, "1" blue]& \twoonebartwo \arrow[r, blue, "1" blue] \arrow[ru, red, "2" red]   & \fcolorbox{purple}{white}{\twotwobarone} \arrow[r, red, "2" red]& \zerotwobarone \arrow[r, red, "2" red]& \bartwotwobarone \arrow[r, red, "2" red]& \fcolorbox{purple}{white}{\bartwobartwobarone} \arrow[dr, blue, "1" blue] &\\
 \fcolorbox{purple}{white}{\twoonetwo} \arrow[ur, red, "2" red]\arrow[dr, blue, "1" blue]&&&&&&& \fcolorbox{purple}{white}{\baronebartwobarone}\\
 &  \fcolorbox{purple}{white}{\twoonetwo} \arrow[r, red, "2" red]& \zeroonetwo \arrow[r, red, "2" red]&\bartwoonetwo \arrow[r, red, "2" red] \arrow[dr, blue, "1" blue]& \fcolorbox{purple}{white}{\bartwoonebartwo} \arrow[r, blue, "1" blue] &\baroneonebartwo \arrow[r, blue, "1" blue] & \fcolorbox{purple}{white}{\baronetwobarone} \arrow[ur, red, "2" red]&\\
 &&&&\baroneonetwo \arrow[ur, red, "2" red]&&&
  \end{tikzcd}\caption{The $B_2$-crystal 
    $\B(1^2|1)$ with the extremal weight vectors (keys) boxed in red.}
\label{fig:B2crystal}
\end{figure}

We will now show that the virtualization map $\vt_{BC}$ on KN tableaux is nothing more than the splitting map. 

\begin{theorem}\label{thm:virt-column}
    Suppose $T \in \KNb{n}{(\lambda)}$ with $\lambda = (1^n|\emptyset)$ or $\lambda = (\emptyset| 1^k)$ with $1\leq k< n$. Then, $\vt_{BC}(T) = \Split(T)$. 
\end{theorem}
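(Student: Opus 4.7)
My plan is to exploit the uniqueness of virtualization. By Remark \ref{rem:virtual hw}, the virtualization $\vt_{BC}$ is uniquely determined by the Dynkin embedding $\psi: B_n \hookrightarrow C_n$: it is the unique $B_n$-crystal isomorphism between $\B(\lambda)$ and a virtual $B_n$-subcrystal of $\tilde{\B}(\psi(\lambda))$ sending the highest weight to the highest weight. Therefore it suffices to show that $\Split$ itself is a virtualization, i.e.\ that (a) $\Split$ is injective, (b) $\Split$ sends the highest weight of $\B(\lambda)$ to the highest weight of $\C(\psi(\lambda))$, (c) the weights match in the virtualization sense $\wt(\Split T) = \psi(\wt T)$, and (d) $\Split$ intertwines crystal operators with the correct multiplicities: $\Split \circ f_i^B = (f_i^C)^{\gamma_i} \circ \Split$ and $\Split \circ e_i^B = (e_i^C)^{\gamma_i} \circ \Split$ for every $i \in I^{B_n}$, with $\gamma_i = 2$ for $i<n$ and $\gamma_n = 1$.

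Points (a), (b), and (c) are straightforward. Injectivity of $\Split$ follows because $I(C), J(C)$ and $C$ can be recovered from $lC,rC$. For the spin case, the highest weight column $[1,2,\ldots,n]$ has no pairs, so $\Split$ is the identity on its entries and the image is the highest weight column of $\C(1^n) = \C(\psi(\omega_n^{B_n}))$; for the non-spin case, the highest weight column $[1,2,\ldots,k]$ has $I(C)=\emptyset$, hence $\Split(C) = CC$ is the highest weight of $\C(2^k)=\C(\psi(\omega_k^{B_n}))$. The weight identity reduces to the fact that $\psi$ acts by doubling in the $e_i$-basis, which matches the identity $\wt(\Split T) = 2\wt(T)$ given in Definition \ref{def:KNtableau}.

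The substantive work is in (d). For spin columns, since all entries lie in $\aC_n$ and there are no pairs, $\Split$ is identity on entries and the verification reduces to a direct signature analysis: for $i<n$, the entries $i$ and $\overline{i+1}$ both carry sign $+$ under $\sigma_i^{C_n}$, and a joint application of $f_i^B$ (which swaps them for $i+1,\bar i$) is realized by two successive actions of $f_i^C$ on $\Split(T)$, acting first on the leftmost unpaired $+$ and then on the next one; degenerate cases (only one of $i,\overline{i+1}$ present) give $0$ on both sides. For $i=n$ in the spin case, the operators $f_n^B$ and $f_n^C$ agree verbatim on the entries $n,\bar n$, matching $\gamma_n=1$. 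For non-spin columns, the analysis is more intricate because the sets $I(C),J(C)$ enter the definition of $\Split$, and the action of $f_i^B$ can alter pairs, modify $I(C)$, and force a relabeling in $J(C)$. I would proceed by case analysis on the type of entry acted upon by $f_i^B$ (an unpaired $i$ or $\overline{i+1}$, an entry forming part of a pair $(z,\bar z)$, or for $i=n$ an entry $n,0,\bar n$) and verify in each case that the effect of two applications of $f_i^C$ on $\Split(C)$---sliding first through $rC$ and then $lC$ (or vice versa, as dictated by the $C_n$-signature)---produces precisely $\Split(C')$ for $C' = f_i^B(C)$; the $i=n$ case uses $\gamma_n=1$ together with the fact that $0$-entries and the $(n,\bar n)$ pair are not relabeled by the splitting recipe. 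The principal obstacle is the bookkeeping in the sub-case where $f_i^B$ creates or destroys a pair $(z,\bar z)$, thereby shifting $J(C)$; here I would directly compute $J(C')$ from the definition and match it against the column obtained from $(f_i^C)^2 \Split(C)$ using the signature rule, showing that the relabeling prescribed by the splitting recipe agrees with what the $C_n$-crystal operators produce.
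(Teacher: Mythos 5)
Your plan — show that $\Split$ is itself a virtualization and then appeal to the uniqueness statement of Remark \ref{rem:virtual hw} — is logically sound and takes a genuinely different route from the paper. The paper avoids any direct verification of the intertwining identity on crystal operators: it replaces $T$ with a skew tableau $T'$ having the same reading word, observes that $\Split(T')$ is just a column-duplication of $T'$ so that $w(\Split(T')) = \vt_{BC}(w(T'))$, and then sandwiches both sides between two existing rectification results, namely \cite[Corollary 6.3.9]{lecouvey2002schensted} (SJDT rectification equals symplectic insertion of the reading word) and \cite[Proposition 3.5.3]{lecouvey2003schensted} ($\rect_C\circ\Split = \Split\circ\rect_B$ on such skew tableaux). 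That second citation is exactly where the combinatorial substance of your step (d) lives; by delegating it, the paper never has to open up the $I(C), J(C)$ relabeling at all. Your approach is more self-contained in spirit, but it inherits the full combinatorial burden.

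Two concrete problems with the execution. First, the heart of the matter — the non-spin verification of $\Split\circ f_i^B = (f_i^C)^{\gamma_i}\circ\Split$ in the sub-cases where $f_i^B$ creates or destroys a pair and hence changes $I(C)$ and $J(C)$ — is only announced, not carried out; this is the genuinely nontrivial part and cannot be waved past by saying you ``would directly compute $J(C')$.'' Second, the stated shortcut for $i=n$ is false: you claim ``$0$-entries and the $(n,\bar n)$ pair are not relabeled by the splitting recipe,'' but $0$ is explicitly included in $I(C)$ (the paper declares that each $0$ pairs with itself), and when the pair $(n,\bar n)$ occurs it lies in $I(C)$ as well since $n\prec 0$. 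The paper's own Example \ref{ex:splitb1} shows both relabelings explicitly: the two $0$'s become $7,8$ in $lC$ and $\bar 8,\bar 7$ in $rC$, and the pair $(9,\bar 9)$ is replaced by $5$ and $\bar 5$. The identity $\Split(f_n^B T)=f_n^C(\Split T)$ does hold, but for reasons that require tracking the relabeling rather than asserting its absence, so the $i=n$ case still needs the full case analysis you deferred.
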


\begin{proof}
We begin by noting that $\Psi_{BC}(i)=i$ for any vertex $i \in B_n$. Thus, the virtualization maps $\omega_i^B \mapsto 2 \omega_i^C$ for $i\neq n$ and $\omega_n^B \mapsto \omega_n^C$. Consequently, for any $\mu = \sum_{i=1}^n{a_i\omega^{B}_i}$ since $\omega_i^C=\omega_i^B$ for $i\neq n$ and $\omega_n^C=2\omega_n^B$, it follows that $\mu \mapsto \sum_{i=1}^{n-1}{2 a_i\omega^{C}_i} + a_n\omega_n^C=\sum_{i=1}^{n-1}{2 a_i\omega^{B}_i} + a_n2\omega_n^B= 2\mu$.

So consider the spin case when $\lambda = (1^n|\emptyset)=\omega_n^B$. Then, for any $T \in \KNb{n}{(1^n|\emptyset)}$, $\vt_{BC}(T)$ is a tableau in $\KNc{n}{(1^n)}$ with weight $2\wt(T)$. Since by definition, a spin KN tableau has no pairs $(z,\bar z)$ for any value of $z$, it follows that every $T \in  \KNb{n}{(1^n|\emptyset)}$, and thus its image in $\KNc{n}{(1^n)}$, is uniquely determined by its weight. Since $\wt(\Split(T)) = 2 \wt(T)$ it follows that $\vt_{BC}(T) = \Split(T)$. 

Consider now the non-spin case $\lambda=(\emptyset|1^k)$ for some $k\neq n$. Given any $T \in \KNb{n}{(\emptyset|1^k)}$, let $T'$ be the KN tableau of skew shape $(k,k-1,\dots,1) / (k-1,k-2,\dots, 1,0)$
with $w(T')=w(T)=x_1\dots x_m$ (see Example \ref{ex: virtualizationBC}). 

Now, recall that the virtualization $\vt_{BC}$, as defined in \cite{pappe2023promotion}, is defined on the standard representation $\B(\omega_1)$ 
via the map
\[
i \mapsto i i, \bar i \mapsto \bar i \bar i, 0 \mapsto n\bar n  \hbox{ for all } 1 \leq i \leq n
\]
and induced on words via the tensor product on crystals by setting \[\vt_{BC}(w(T))=\vt_{BC}(w(x_1\otimes \dots \otimes x_m)) := w(\vt_{BC}(x_1)\otimes \dots \otimes \vt_{BC}(x_m)),\]
so that $\vt_{BC}(T) = [ \emptyset \xleftarrow{C} \vt_{BC}(w(T))]$.
In particular, since $w(T)=w(T')$, then $\vt_{BC}(T)=\vt_{BC}(T')$. Evidently, since $\Split(T')$ is simply a duplication of the columns of $T'$, then 
\[
w(\Split(T')) = w(\vt_{BC}(x_1)\otimes \dots \otimes \vt_{BC}(x_m)) = \vt_{BC}(w(T'))
\]
Moreover, by \cite[Corollary 6.3.9]{lecouvey2002schensted} we know that for any type C KN tableau rectification via SJDT coincides with type C insertion of its reading word. Thus, it follows that:
\begin{equation}
\rect_C(\Split(T')) = [\emptyset \xleftarrow{C} w(\Split(T'))]  = [ \emptyset \xleftarrow{C} \vt_{BC}(w(T'))] = \vt_{BC}(T).    
\end{equation}
On the other hand, by \cite[Proposition 3.5.3]{lecouvey2003schensted}
also have that 
\[
\rect_C(\Split(T')) = \Split[\emptyset \xleftarrow{B} w(T')] = \Split[\emptyset \xleftarrow{B} w(T)]=\Split(T),
\]
so that $\vt_{BC}(T) = \Split(T)$ as desired. 
\end{proof}

\begin{example}\label{ex: virtualizationBC} 
Let $n=3$ and
    consider the type $B_3$ column tableau $T = \tableau{0\\0\\ \bar{1}}$ with $\Split(T) = \tableau{2&\bar 3\\3&\bar 2\\\bar{1} &\bar{1}}$. Then we have, 
    \[
T'= \tableau{ && 0\\ &0\\ \bar{1}} \qquad \text{and} \qquad
\Split(T')= \tableau{ && &&3&\bar 3\\ &&3&\bar 3\\ \bar{1}&\bar{1}}.
    \]
Computing, we can see that $\vt_{BC}(T) = [\emptyset \xleftarrow{C} \vt_{BC}(00\bar{1})]=[\emptyset \xleftarrow{C} 3\bar 3 3\bar 3\bar{1}\bar{1}]$. From Remark \ref{re:insertionjeutaquin}, we know  that $[\emptyset \xleftarrow{C} 3\bar{3}3\bar{3}\bar 1\bar 1]$ equals  to the SJDT rectification of $\Split(T')$. We then obtain by SJDT:
\begin{align*}
&\tableau{ && &&3&\bar 3\\ &&3&\bar 3\\ \bar{1}&\bar{1}} 
\longrightarrow
~\tableau{ &&&& 3&\bar 3\\ &3&&\bar 3\\ \bar{1}&\bar{1}}
\longrightarrow 
~\tableau{ && &3&\bar 3\\ &3&\bar 3\\ \bar{1}&\bar{1}}
\longrightarrow
~\tableau{ &&& 3&\bar 3\\ 3&&\bar 3\\ \bar{1}&\bar{1}}
\\
\\
&\longrightarrow
~\tableau{ && 3&\bar 3\\ 3&\bar 3&&\\ \bar{1}&\bar{1}}
\longrightarrow 
~\tableau{ & 3&\bar 3\\ 3&\bar 3&\\ \bar{1}&\bar{1}}
\longrightarrow
~\tableau{2&& \bar 3\\ 3&\bar 2&\\ \bar{1}&\bar{1}}
\longrightarrow 
~\tableau{2 & \bar 3\\ 3&\bar 2&\\ \bar{1}&\bar{1}}=\Split(T)
\end{align*}
so that, indeed, $\vt_{BC}(T) = \rect_C(\Split(T'))$.
\end{example}

\begin{corollary}\label{cor:virt-nospin}
    For any $T \in \KNb{n}{(\mu_0| \mu)}$ we have that $\vt_{BC}(T)= \Split(T)$.
    \end{corollary}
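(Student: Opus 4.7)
The plan is to reduce the general statement to the column case handled by Theorem \ref{thm:virt-column}, using that $\vt_{BC}$ is closed under tensor products (Proposition \ref{virtualproperties}) and that splitting intertwines with $B$- and $C$-rectification. First, for $T \in \KNb{n}{(\mu_0|\mu)}$ with columns $T_0, T_1, \ldots, T_{\mu_1}$ (with $T_0$ the spin column when $\mu_0 = (1^n)$), I would form the diagonal skew KN tableau $T'$ whose columns are $T_0, T_1, \ldots, T_{\mu_1}$ placed on pairwise disjoint sets of rows. Then $w(T') = w(T)$, and by Remark \ref{re:tensorinsertionscheme} we have $\rect_B(T') = T$, since $B$-insertion of $w(T)$ realizes a $B_n$-crystal isomorphism identifying the skew tableau with its rectification.

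Next, by Proposition \ref{virtualproperties} virtualization is closed under tensor products, so
\[
\vt_{BC}(T') \;=\; \vt_{BC}(T_0) \otimes \vt_{BC}(T_1) \otimes \cdots \otimes \vt_{BC}(T_{\mu_1}).
\]
Applying Theorem \ref{thm:virt-column} to each factor gives $\vt_{BC}(T_i) = \Split(T_i)$, so the image is identified with the diagonal skew $C_n$-KN tableau $\Split(T')$ whose columns are the splittings of the columns of $T$. Since $\vt_{BC}(T)$ is obtained by $C$-inserting $\vt_{BC}(w(T)) = \vt_{BC}(w(T'))$, and since $C$-insertion of the reading word of a $C_n$-KN skew tableau coincides with its SJDT rectification by \cite[Corollary 6.3.9]{lecouvey2002schensted}, it follows that
\[
\vt_{BC}(T) \;=\; \rect_C(\Split(T')).
\]

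Finally, I would invoke Lecouvey's splitting--rectification intertwiner \cite[Proposition 3.5.3]{lecouvey2003schensted}, namely $\rect_C \circ \Split = \Split \circ \rect_B$, to conclude
\[
\vt_{BC}(T) \;=\; \rect_C(\Split(T')) \;=\; \Split(\rect_B(T')) \;=\; \Split(T),
\]
which is the claim.

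The main subtlety I expect is the presence of the leftmost spin column: $\Split(T_0)$ has the same entries as $T_0$ but is reinterpreted as a $C_n$-column with doubled weight, whereas the splittings of the non-spin columns genuinely introduce new letters. Theorem \ref{thm:virt-column} handles both types of columns, and combined with the uniqueness of virtualization (Remark \ref{rem:virtual hw}) and Proposition \ref{virtualproperties} the tensor decomposition of $\vt_{BC}$ is uniform across mixed spin/non-spin factors. The point requiring the most care is confirming that Lecouvey's intertwiner extends to orthogonal skew tableaux containing a spin column; this should follow from its proof, but would need explicit verification in the spin case.
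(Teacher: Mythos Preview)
Your approach is correct and, like the paper's, reduces to Theorem \ref{thm:virt-column} column by column; but the paper takes a shorter route that avoids both the diagonal skew tableau $T'$ and Lecouvey's intertwiner $\rect_C\circ\Split=\Split\circ\rect_B$. The key observation you are not using is that $\Split(T)$ is already a straight-shape type $C_n$ KN tableau, so $[\emptyset\xleftarrow{C} w(\Split(T))]=\Split(T)$ with no rectification needed. The paper then simply checks, column by column via Theorem \ref{thm:virt-column}, that $w(\Split(T))$ and $\vt_{BC}(w(T))$ agree (in the type $C$ plactic monoid), hence $C$-insert to the same tableau; this gives $\Split(T)=\vt_{BC}(T)$ directly. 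In particular the spin-column concern you flag never arises, because the intertwiner is not invoked at all. Your longer route is valid too: the commuting square you are implicitly using (virtualization intertwining the $B$- and $C$-rectification crystal isomorphisms) follows from uniqueness of virtualization (Remark \ref{rem:virtual hw}) together with Proposition \ref{virtualproperties}, so you do not actually need to cite \cite[Proposition 3.5.3]{lecouvey2003schensted} or verify its spin extension.
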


\begin{proof} 
Recall that by definition $\Split(T) = \Split(T_0)\Split(T_1)\cdots \Split(T_{\mu_1})$ where $T_i$ is the $i^{th}$ column of $T$, with $T_0$ the (potentially empty) spin column. Since by Theorem \ref{thm:virt-column} 
    \[
\Split(T) = \Split(T_0)\Split(T_1)\cdots \Split(T_{\mu_1}) = \vt_{BC}(T_0)\vt_{BC}(T_1) \cdots \vt_{BC}(T_{\mu_1})
    \]
it follows that, 
     \begin{align*}
w(\Split(T)) &= w(\vt_{BC}(T_0) \vt_{BC}(T_1) \cdots \vt_{BC}(T_{\mu_1})) = w( \vt_{BC}(T_{\mu_1})) \cdots w(\vt_{BC}(T_1))w(\vt_{BC}(T_0))\\
&= \vt_{BC}(w(T_{\mu_1})) \cdots \vt_{BC}(w(T_1)) \vt_{BC}(w(T_0))
= \vt_{BC}(w(T_{\mu_1})\cdots w(T_1)w(T_0)) \\
&= \vt_{BC}(w(T))
    \end{align*}
    Thus, the result follows by performing type C insertion:
    \[ 
\Split(T) = [\emptyset \xleftarrow{C} w(\Split(T))] = [\emptyset \xleftarrow{C} \vt_{BC}(w(T))] = \vt_{BC}(T).
    \]
    \end{proof}

\begin{figure}[ht]
 \begin{tikzcd}[cramped,column sep=small, row sep=1.7em]
 &&&\vzeroonebartwo \arrow[dr, blue, "1" blue]&&&&\\
 & \voneonebartwo \arrow[r, blue, "1" blue]& \vtwoonebartwo \arrow[r, blue, "1" blue] \arrow[ru, red, "2" red]   & \vtwotwobarone \arrow[r, red, "2" red]& \vzerotwobarone \arrow[r, red, "2" red]& \vbartwotwobarone \arrow[r, red, "2" red]& \vbartwobartwobarone \arrow[dr, blue, "1" blue] &\\
 \vtwoonetwo \arrow[ur, red, "2" red]\arrow[dr, blue, "1" blue]&&&&&&&\vbaronebartwobarone\\
 &  \vtwoonetwo \arrow[r, red, "2" red]& \vzeroonetwo \arrow[r, red, "2" red]&\vbartwoonetwo \arrow[r, red, "2" red] \arrow[dr, blue, "1" blue]& \vbartwoonebartwo \arrow[r, blue, "1" blue] &\vbaroneonebartwo \arrow[r, blue, "1" blue] & \vbaronetwobarone \arrow[ur, red, "2" red]&\\
 &&&&\vbaroneonetwo \arrow[ur, red, "2" red]&&&
  \end{tikzcd}
\caption{The virtual crystal  $\vt^{BC}(\B(1^2|1))$ embedded into the type $C_2$-crystal $\C(3,1)$.}
\label{fig:virtualcrystal}
\end{figure} 

A nice consequence of the discussion above is that the \emph{orthogonal jeu de taquin} (BJDT), defined in \cite{lecouvey2003schensted}, can be reformulated in terms of the virtualization $\vt_{BC}$ and SJDT as the pullback of SJDT by $\vt_{BC}^{-1}=(\Split)^{-1}$.  

\begin{corollary}\label{cor:virt-rect-commute}
    Let $T$ be any orthogonal skew KN tableau with a potentially nonempty spin column. Then,
    \[
\vt_{BC} \circ \rect_B(T) = \rect_C \circ \vt_{BC}(T).
    \]
\end{corollary}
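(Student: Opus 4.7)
The plan is to reduce both sides of the desired identity to the single straight-shape type-$C$ tableau obtained by column-inserting the virtualized reading word, namely $[\emptyset \xleftarrow{C} \vt_{BC}(w(T))]$, and then invoke Lecouvey's commutation between $B$- and $C$-insertion.

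First, I would upgrade Corollary \ref{cor:virt-nospin} to the skew setting. Since splitting acts column-by-column and virtualization is closed under tensor products (Proposition \ref{virtualproperties}), and since the virtualization on the standard and spin representations is determined by the letter-doubling map $i \mapsto ii$, $\bar i \mapsto \bar i \bar i$, $0 \mapsto n\bar n$ (as recalled in the proof of Theorem \ref{thm:virt-column}), one obtains $\vt_{BC}(T) = \Split(T)$ for any skew orthogonal KN tableau $T$, possibly with a spin column. Consequently, at the level of reading words,
\[
w(\Split(T)) \;=\; \vt_{BC}(w(T)),
\]
where the right-hand side means the letter-doubling map applied entry-wise to $w(T)$.

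Next I would compute the right-hand side. By Lecouvey's result that type-$C$ rectification via SJDT coincides with type-$C$ column insertion of the reading word (\cite[Corollary 6.3.9]{lecouvey2002schensted}; see also Remark \ref{re:tensorinsertionscheme}),
\[
\rect_C(\vt_{BC}(T)) \;=\; \rect_C(\Split(T)) \;=\; [\emptyset \xleftarrow{C} w(\Split(T))] \;=\; [\emptyset \xleftarrow{C} \vt_{BC}(w(T))].
\]
For the left-hand side, the analogous result for type $B$ (Remark \ref{re:tensorinsertionscheme}, \cite{baker00bn,lecouvey2003schensted}) gives $\rect_B(T) = [\emptyset \xleftarrow{B} w(T)]$, which is of straight shape. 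Applying Corollary \ref{cor:virt-nospin} to this straight-shape tableau and then Lecouvey's intertwining \cite[Proposition 3.5.3]{lecouvey2003schensted} of $B$- and $C$-insertion through splitting yields
\[
\vt_{BC}(\rect_B(T)) \;=\; \Split\bigl([\emptyset \xleftarrow{B} w(T)]\bigr) \;=\; [\emptyset \xleftarrow{C} \vt_{BC}(w(T))].
\]
The two expressions coincide, proving the corollary.

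The main subtlety lies not in the chain of equalities, which is short once the ingredients are identified, but in extending the identification $\vt_{BC} = \Split$ from the straight-shape case (Corollary \ref{cor:virt-nospin}) to skew tableaux containing a spin column. Splitting a spin column is not a mere duplication of letters but a substitution via the sets $I(C)$ and $J(C)$, and one must check that this agrees with the letter-doubling virtualization on the corresponding reading word after $B$-insertion. This is controlled precisely by \cite[Proposition 3.5.3]{lecouvey2003schensted}, which guarantees that inserting the doubled word in type $C$ reproduces the splitting of the $B$-insertion tableau, spin portion included.
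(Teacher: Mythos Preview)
Your approach is essentially the same as the paper's: both reduce each side of the identity to $[\emptyset \xleftarrow{C} \vt_{BC}(w(T))]$ by identifying rectification with insertion of the reading word in types $B$ and $C$, and by invoking Lecouvey's intertwining \cite[Proposition~3.5.3]{lecouvey2003schensted}. The paper packages this as a single commutative square; you spell out the two chains of equalities, which is arguably clearer.

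One small correction to your closing paragraph: you have the roles reversed. By Definition~\ref{def:spinTab} a spin column contains no pairs $(z,\bar z)$, so $I(S)=\emptyset$ and $\Split(S)$ is literally the same column with the same entries (only its weight is reinterpreted as integral); there is no $I(C),J(C)$ substitution. It is the \emph{non-spin} columns $C$ that are split into $lC\,rC$ via $I(C),J(C)$, and for these $w(\Split(C))$ is in general \emph{not} equal as a word to the letter-doubling $\vt_{BC}(w(C))$ (e.g.\ a single cell $\boxed{0}$ splits to $\boxed{n}\,\boxed{\bar n}$, whose reading word differs from $n\bar n$). What makes the argument work is that these two words have the same $C$-insertion, which is exactly the content of Theorem~\ref{thm:virt-column} combined with \cite[Prop.~3.5.3]{lecouvey2003schensted}. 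So your displayed equality $w(\Split(T)) = \vt_{BC}(w(T))$ should be understood as equality after $C$-insertion (equivalently, in the type-$C$ plactic monoid), not as literal equality of words; with that reading, the proof is correct.
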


\begin{proof}
This is immediate from the arguments above where virtualization is first performed on words and then inserted. Namely, if $w(T)=x_1\cdots x_m$ denotes the reading word of $T$, then the following diagram commutes:
\begin{equation*}\label{BtoC}
\begin{tikzcd}
 x_1\otimes \cdots \otimes x_m\in\B(\omega_1^B)^{\otimes m}\arrow[r, "(\vt_{BC})^{\otimes m}"] \arrow[d, "B_n-insertion"] &  \vt_{BC}(x_1)\otimes \cdots \otimes \vt_{BC}(x_m) \in\B(2\omega^C_1)^{\otimes m} \arrow[d, "C_n-insertion", shift right=5ex ] \\
T=[  \emptyset \leftarrow x_{1} \leftarrow \cdots
 \leftarrow x_{m}] \in \KNb{n}{(\lambda)}   {\arrow[r, "\vt_{BC}"] }& \Split(T)=[  \emptyset \leftarrow \vt_{BC}(x_{1}) \leftarrow \cdots
 \leftarrow \vt_{BC}(x_{m})]\in  \KNc{n}{(2\lambda)}.
\end{tikzcd}
\end{equation*}
\end{proof}

\subsection{Applications to Keys and Orthogonal Evacuation} In what follows we give a  couple of combinatorial consequences of the work in Sections \ref{sec:crystals} and \ref{sec:demazure}. The first result is a combinatorial characterization of orthogonal keys. 

\begin{proposition}\label{prop:key}
    For any $T \in \KNb{n}{(\lambda)}$, we have that $T$ is an extremal weight vector if and only if the columns of $T$ are nested with no pairs $(i,\bar{i})$ appearing in any column for any $0\leq i\leq n$. 
\end{proposition}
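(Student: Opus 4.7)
My plan is to transport the problem through the splitting map, using the identification $\vt_{BC} = \Split$ from Corollary \ref{cor:virt-nospin}, and reduce the statement to the known combinatorial characterization of symplectic keys.

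First I would observe that for the Dynkin embedding $\psi_{BC}: B_n \hookrightarrow C_n$, Table \ref{fig:gamma} gives $\Psi(i) = \{i\}$ for every $i$, so the induced virtual Weyl group is $\widetilde{W}^B = \langle \tilde s_i \mid i \in I^B\rangle = W^C$. Hence Lemma \ref{lem:orbit-virt} yields
\[
\vt_{BC}(\cO^B(\lambda)) \;=\; \widetilde{\cO}^B(\psi(\lambda)) \;=\; \cO^C(\psi(\lambda)),
\]
and since $\Split$ is injective we obtain the key reduction: $T \in \KNb{n}(\lambda)$ is an extremal weight vector if and only if $\Split(T) \in \KNc{n}(\psi(\lambda))$ is an extremal weight vector in type $C_n$. (Equivalently, one can derive this from Theorem \ref{thm:keys-virt-commute} by the equalities $K^+(\Split(T)) = \Split(K^+(T))$ and $K^-(\Split(T))=\Split(K^-(T))$.)

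Next I would invoke the known combinatorial description of symplectic keys due to Santos \cite{sa21a}: a type $C_n$ KN tableau $K$ is extremal if and only if its columns are set-theoretically nested and contain no symmetric pair $(i, \bar i)$.

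Finally I would verify that the structural condition on $T$ matches the symplectic condition on $\Split(T)$ column-by-column. For the forward direction: if no column of $T$ contains a pair (including no $0$, which is treated as self-paired), then for every non-spin column $T_j$ one has $I(T_j) = J(T_j) = \emptyset$, so the splitting merely duplicates the column, and combined with the spin column $\Split(T_0)$ (which has the same entries as $T_0$) this gives
\[
\Split(T) \;=\; \Split(T_0)\, T_1 T_1\, T_2 T_2 \cdots T_{\mu_1} T_{\mu_1},
\]
whose columns are nested in the symplectic sense precisely when the columns of $T$ are nested. For the converse, if some column $T_j$ contains a pair $(z, \bar z)$, then in $\Split(T_j) = lT_j\, rT_j$ the splitting replaces $z$ by a smaller unbarred letter $t \in J(T_j)$ in $lT_j$ and $\bar z$ by $\bar t$ in $rT_j$; by the very construction of $J(T_j)$, neither $t$ nor $\bar t$ appears in $T_j$, so $\bar t \in rT_j \setminus lT_j$ and the nesting of $\Split(T)$ is already broken inside that block. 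When all columns of $T$ are pair-free, the splitting is simply column-duplication, so the nesting of $\Split(T)$ reduces exactly to the nesting of $T$; thus failure of nesting in $T$ produces failure of nesting in $\Split(T)$.

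The serious theoretical input (commutation of virtualization with the key maps, and the symplectic key characterization) is already available, so the remaining work is almost entirely bookkeeping. The only mildly delicate point is treating the pair $(0,0)$ on equal footing with $(i,\bar i)$ for $i \geq 1$, which works because zeros are placed in $I(T_j)$ by the same rule as genuine pairs, so the replacement argument goes through unchanged.
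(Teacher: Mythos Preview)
Your proof is correct and follows essentially the same route as the paper: reduce to type $C_n$ via $\vt_{BC}=\Split$ (using Lemma \ref{lem:orbit-virt} or Theorem \ref{thm:keys-virt-commute}), invoke Santos's characterization of symplectic keys, and translate the nesting/no-pair condition back through the splitting. You spell out the column-by-column verification (splitting duplicates columns exactly when no pairs are present, and $lT_j\neq rT_j$ otherwise) more explicitly than the paper, which simply asserts this equivalence.
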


\begin{proof}
     By Theorem \ref{thm:keys-virt-commute}, $T$ is a type $B_n$ left (resp. right) key if and only if $\Split(T)$ is a type $C_n$ left (resp. right) key. A symplectic key is a $C_n$ KN tableau where columns are nested and the letters $i$ and $\bar{i}$, for any $1\leq i\leq n$, do not appear simultaneously as entries in a column \cite{sa21a}. Hence  $T$ is a key in type $B_n$ if and only if the columns of $\Split(T)$  are nested. That is, the columns of $T$  are nested and  
     no column contains pairs of the form $(i, \bar{i})$ for any $i\neq 0$ and no entry equal to zero. Since zero pairs with itself, the result follows. 
\end{proof}

\begin{example}
Consider the tableau $\spintab{1\\\bar{2}}\tableau{\bar{2}} \in \B(1^2|1)$ in Figure \ref{fig:B2crystal}. Then indeed, no column contains any pairs with $\{\bar{2}\} \subset \{ 1 , \bar{2}\}$. A similar study of the other extremal weight vectors in $\B(1^2|1)$ yields similar conclusions.
\end{example}

Now, recall that by Proposition \ref{prop:virtualevac} we know that virtualization commutes with the Lusztig--Sch\"utzenberger involution $\xi$. That is, if $\vt$ is a virtualization from $\B$ to $\tilde{\B}$ then $\xi_{\B} = \vt^{-1} \xi_{\tilde{\B}} \vt$. Computing the Lusztig--Sch\"utzenberger involution on a given tableau coincides with so called \emph{evacuation}. 

For type $C$, Santos \cite{sa21a,sa21b}  developed a combinatorial method that directly computes this result without having to perform the involution on the entire crystal. In particular, given a symplectic tableau $T$, its evacuation $\evac_C(T)$ is given by first barring the unbarred entries and vice versa, followed by performing a 180 degree rotation of the tableau, and then rectifying the diagram via SJDT. 

For type $B_n$ such a direct combinatorial method is not known. Nonetheless, since evacuation of a tableau coincides exactly with computing its image under the Lusztig--Sch\"utzenberger involution, then setting $\vt=\vt_{BC}=\Split$ and $\xi_C = \evac_C$ (see \cite{sa21a}) we can compute orthogonal evacuation combinatorially by setting:
\begin{equation}\label{eq:evacB}
\evac_B:= (\Split)^{-1} \evac_C \Split
\end{equation}

\begin{example}\label{ex:evac}
Consider $T=\spintab{2\\\bar{1}}\tableau{0}\in \B(1^2|1)$ as in Figure \ref{fig:B2crystal}. Then, since $T= f_2f_1^2f_2\left(\spintab{1\\2}\tableau{1}\right)$
by looking at the complete crystal graph we can see that $\xi_B(T) = e_2e_1^2e_2\left(\spintab{\bar{2}\\\bar{1}}\tableau{\bar{1}}\right)= \bartwoonetwo$. 

On the other hand, computing $\evac_B(T)$ we find:
\[
\label{evacexample}
T=\zerotwobarone \xrightarrow{\operatorname{split}} \vzerotwobarone \xrightarrow{}\tableau{\bar{2}&\bar{2}&2\\1}\xrightarrow{180^0\text{-rotation}}\tableau{&&1\\2&\bar{2}&\bar{2}}\xrightarrow{\rect_C} \vbartwoonetwo\xrightarrow{\operatorname{split}^{-1}}\bartwoonetwo=\evac_B(T).
\]
\end{example}

It is natural to inquire what other types such a method can be applied. Theoretically, of course, the results are type independent. In practice, they are somewhat limited by the combinatorial tools available. 
In particular,  a jeu de taquin for type $D_n$ is not known. The pullback of SJDT, via the virtualization $\vt^{BC}$,  utilized to define a jeu de taquin in type $B_n$ does not work to define a jeu de taquin in type $D_n$ as shown in \cite{lecouvey2003schensted}.
Nonetheless, despite the fact that a type $D_n$ jeu de taquin is not available one has an $D_n$-insertion scheme for crystals \cite{KimInsertion,  lecouveythese, lecouvey2007combinatorics}. The evacuation in type $D_n$ should proceed in a similar way by applying the  longest element of the Weyl group of type $D_n$ to the entries of the tableau and replacing the SJDT by the appropriate insertion in type $D_n$. 

\section{Additional Examples}\label{sec:examples}
In this section we provide some explicit examples of how Theorems \ref{thm:extremal endpoints}, \ref{thm:Keys-Schutz-commute}, and \ref{thm:keys-virt-commute} can be used to compute the left/right keys of a given orthogonal tableau. 

\subsection{Applying Theorem \ref{thm:extremal endpoints} }Consider the $B_2$-crystal $B(1^2|1)$ in Figure \ref{fig:B2crystal} and its $3$-dilation $\Theta_3(B(1^2|1))$ in Figure \ref{fig:3dilate}. Then, by Theorem \ref{thm:extremal endpoints} we can compute the left and right key of any $b \in \B(1^2|1)$ by looking at its image in $\Theta_3(B(1^2|1))$. 

So let $T = \spintab{2\\\bar{1}}\tableau{0}= e_2^2e_1\left( \spintab{\bar{2}\\\bar{1}}\tableau{\bar{1}}\right)$. Then, since  
$
\Theta_3(T)= 
e_2^6e_1^3\left( \spintab{\bar{2}\\\bar{1}}\tableau{\bar{1}}^{\otimes 3}\right)
=\spintab{\bar{2}\\\bar{1}}\tableau{\bar{2}}\otimes \spintab{2\\ \bar{1}}\tableau{2}\otimes \spintab{2\\ \bar{1}}\tableau{2}
$
it follows from Theorem \ref{thm:extremal endpoints} that 
\[
K^+(T)=K^+\left(\spintab{2\\\bar{1}}\tableau{0}\right)=\spintab{\bar{2}\\\bar{1}}\tableau{\bar{2}}
\qquad\text{and}\qquad
K^-(T)=K^-\left(\spintab{2\\\bar{1}}\tableau{0}\right)=
\spintab{2\\\bar{1}}\tableau{{2}}.
\]

\subsection{Applying Theorem \ref{thm:Keys-Schutz-commute}}
We will now apply Theorem \ref{thm:Keys-Schutz-commute} in two different ways. Thus, from Example \ref{ex:evac} and the $3$-dilation of $\B(1^2|1)$ in Figure \ref{fig:3dilate} we have 
\[\xi_B(T) = e_2e_1^2e_2\left(\spintab{\bar{2}\\\bar{1}}\tableau{\bar{1}}\right), \qquad \text{and} \qquad
\Theta_3(\xi_B(T) ) = e_2^3e_1^6e_2^3\left(\spintab{\bar{2}\\\bar{1}}\tableau{\bar{1}}^{\otimes 3}\right) = \spintab{1\\\bar{2}}\tableau{\bar{2}}\otimes \spintab{1\\\bar{2}}\tableau{\bar{2}} \otimes \spintab{1\\{2}}\tableau{{2}}.\]
Thus,
$K^+(\xi_B(T))= \spintab{1\\\bar{2}}\tableau{\bar{2}} =e_1^2e_2\left(\spintab{\bar{2}\\\bar{1}}\tableau{\bar{1}}\right)$, and so $\xi_B(K^+(\xi_B(T))) = f_1^2f_2 \left( \spintab{1\\2}\tableau{2}\right) =\spintab{2\\\bar{1}}\tableau{2}$, which indeed coincides with $K^-(T)$.

On the other hand, in light of Corollary \ref{cor:virt-nospin} and Equation \eqref{eq:evacB} we can also perform this computation without the use of the full crystal graph. Namely, from Example \ref{ex:evac}

\[
\evac_B\left(K^+\left(\evac_B\spintab{2\\\bar{1}}\tableau{0}\right)\right) =\evac_B\left(K^+\left(\bartwoonetwo\right)\right)=    \operatorname{evac}^B\bartwoonebartwo=\spintab{2\\\bar{1}}\tableau{2}=K^-\left(\spintab{2\\\bar{1}}\tableau{0}\right).
\]

\subsection{Applying Theorem \ref{thm:keys-virt-commute}} Letting $\vt=\vt_{BC}$ and $T = \spintab{2\\\bar{1}}\tableau{0}$ as above, then using the information of the entire crystal graphs in Figures \ref{fig:B2crystal} and \ref{fig:virtualcrystal} and applying Corollary \ref{cor:virt-nospin}, we can verify that
\[
K^+(\vt_{BC}(T) ) = K^+(\Split(T) )=K^+\left( \tableau{{2}& 2&{\bar 2}\\ \bar 1}\right) = \tableau{ \bar 2\\\bar{1}}\tableau{\bar 2&\bar 2} =\Split \left( \spintab{\bar{2}\\\bar{1}}\tableau{\bar{2}} \right) = \vt_{BC}(K^+(T)).
\]

On the other hand, since by Theorem \ref{thm:keys-virt-commute} $K^+(T) = \vt_{BC}^{-1}\circ K^+(\vt_{BC}(T) )$ then, $K^+(T)$ can also be computed directly applying the procedure in \cite[Example 16]{sa21a, sa21b} and \cite[Example 5.2]{azenhas2024virtualkeys} that uses symplectic jeu de taquin (SJDT) and reverse symplectic jeu de taquin (RSJDT) on $\Split(T)$ and then virtualizing back. Namely, applying RSJDT to $\Split(T)=\tableau{{2}& 2&{\bar 2}\\ \bar 1}$ we obtain, 
\begin{align*}\label{RSJDT}
\tableau{{2}& 2&{\bar 2}\\ \bar 1}\stackrel{ RSJDT}{\longrightarrow} \tableau{{2}& 2&{\bar 2}\\ &\bar 1}   \stackrel{ RSJDT}{\longrightarrow} \tableau{& 2&{\bar 2}\\2 &\bar 1}  \stackrel{ RSJDT}{\longrightarrow} \tableau{{2}& 2&{\bar 2}\\ &\bar 1}   \stackrel{ RSJDT}{\longrightarrow} \tableau{& 2&{\bar 2}\\2 &&\bar 1}\stackrel{ RSJDT}{\longrightarrow} \tableau{& &{\bar 2}\\2 &2&\bar 1}.
\end{align*}
Then, if we consider the first and last skew-tableau in this sequence
\begin{align*}
 \tableau{{2}& 2&{\bar 2}\\ \bar 1}  {~\longrightarrow} ~\tableau{& &{\bar 2}\\2 &2&\bar 1}
\end{align*} 
we find that the right key $K^+(\Split(T))$ has columns $\tableau{{\bar 2}\\\bar 1}$ with multiplicity one and $\tableau{\bar 2}$ with multiplicity two (similarly, the left key $K^-(\Split(T))$ has columns $\tableau{{ 2}\\\bar 1}$ with multiplicity one and and $\tableau{ 2}$ with multiplicity two). Thus, by Theorem \ref{thm:keys-virt-commute} $K^+(T)$ is given by:
\[
K^+(T)=\Split^{-1}(K^+(\Split(T)))=\Split^{-1}(K^+\left(\tableau{{2}& 2&{\bar 2}\\ \bar 1}\right)) = \Split^{-1}\left(\tableau{{\bar 2}& \bar 2&{\bar 2}\\ \bar 1}\right)=\spintab{\bar 2\\\bar 1}\tableau{\bar 2}. 
\]
A similar computation yields, 
\[
K^-(T)=\Split^{-1}(K^-(\Split(T)))=\Split^{-1}\left(K^-\left(\tableau{{2}& 2&{\bar 2}\\ \bar 1}\right)\right)=\Split^{-1}\left(\tableau{{ 2}&  2&{ 2}\\ \bar 1}\right)=\spintab{ 2\\\bar 1}\tableau {2}.
\]

\subsection{Example of Theorem \ref{thm:extremal endpoints}} 
Finally, we offer an example of the differences between the $3$ and $6$-dilations of the $B_2$-crystal $\B(1^2|1)$ seen in Figures \ref{fig:3dilate} and \ref{fig:6-dilation}. Namely, by Theorem \ref{thm:extremal endpoints}, since the longest $i$-string in $\B(1^2|1)$ has length three, it suffices to compute the $3$-dilations in order to find the left/right keys of any given tableau $T \in \B(1^2|1)$. On the other hand, observe that in general the intermediate tensor factor in $\B(1^2|1)$ will not be an extremal weight vector. Take for instance, $T=f_1f_2\left(\spintab{1\\2}\tableau{1}\right) = \spintab{1\\ \bar{2}}\tableau{1}$. Then, $\Theta_3(T)= \spintab{2\\ \bar{1}}\tableau{2} \otimes \spintab{1\\\bar{2}}\tableau{2} \otimes \spintab{1\\\bar{2}}\tableau{1}$   
where both $\spintab{2\\ \bar{1}}\tableau{2}$ and $\spintab{1\\\bar{2}}\tableau{1}$ are extremal vectors, but $\spintab{1\\\bar{2}}\tableau{2} $ is not.

On the other hand, comparing this with 
$\Theta_6(T)= \spintab{2\\ \bar{1}}\tableau{2}^{\otimes 3} \otimes \spintab{1\\\bar{2}}\tableau{1}^{\otimes 3}$, we see that all vectors in this tensor product are indeed extremal. It is easy to see that this property holds for all vectors in $\B(1^2|1)$. Indeed, although we know of no references for the following, we suspect this property is true in general. Namely, that $\Theta_m(b) \in \cO(\lambda)^{\otimes m}$ all $b \in \B(\lambda)$  if and only if $\ell$ divides $m$ where $\ell$ is the is the least common multiple of the lengths of all the $i$-strings that originate at some extremal weight vector in $\B(\lambda)$. 

\begin{figure}[ht]
\begin{tikzcd}[ampersand replacement=\&,cramped,column sep=-1em, row sep=3em]
	\&\& \oneonetwo^{\otimes 3} \\
	\& \twoonetwo^{\otimes 3} \&\& \oneonebartwo^{\otimes 3} \\
	\& \bartwoonebartwo \otimes \twoonetwo^{\otimes 2} \&\& \twotwobarone \otimes \twoonebartwo \otimes\oneonebartwo \\
	\& \bartwoonebartwo^{\otimes 2}\otimes \twoonetwo \&\& \twotwobarone^{\otimes 3} \& \bartwobartwobarone \otimes \twoonebartwo \otimes \oneonebartwo \\
	\baronetwobarone \otimes \baroneonebartwo \otimes \twoonetwo \& \bartwoonebartwo^{\otimes 3} \&\& \bartwobartwobarone\otimes \twotwobarone^{\otimes 2} \\
	\& \baronetwobarone \otimes \baroneonebartwo \otimes \bartwoonebartwo \&\& \bartwobartwobarone^{\otimes 2}\otimes \twotwobarone \\
	\& \baronetwobarone^{\otimes 3} \&\& \bartwobartwobarone^{\otimes 3} \\
	\&\& \baronebartwobarone^{\otimes 3}
	\arrow["1^3"', from=1-3, to=2-2,blue]
	\arrow["2^3", from=1-3, to=2-4,red]
	\arrow["2^3"', from=2-2, to=3-2,red]
	\arrow["1^3", from=2-4, to=3-4,blue]
	\arrow["2^3"', from=3-2, to=4-2,red]
	\arrow["1^3"', from=3-4, to=4-4,blue]
	\arrow["2^3", from=3-4, to=4-5,red]
	\arrow["1^3"', from=4-2, to=5-1,blue]
	\arrow["2^3", from=4-2, to=5-2,red]
	\arrow["2^3"', from=4-4, to=5-4,red]
	\arrow["1^3", from=4-5, to=5-4,blue]
	\arrow["2^3"', from=5-1, to=6-2,red]
	\arrow["1^3", from=5-2, to=6-2,blue]
	\arrow["2^3", from=5-4, to=6-4,red]
	\arrow["1^3"', from=6-2, to=7-2,blue]
	\arrow["2^3", from=6-4, to=7-4,red]
	\arrow["2^3"', from=7-2, to=8-3,red]
	\arrow["1^3", from=7-4, to=8-3,blue]
\end{tikzcd}
\caption{The $3$-dilation $\Theta_3(\B(1^2|1)$ of the $B_2$-crystal $\B(1^2|1)$.}
\label{fig:3dilate}
\end{figure}

\begin{figure}
\begin{tikzcd}[ampersand replacement=\&,cramped,column sep=-1em, row sep=3em]
	\&\& \oneonetwo^{\otimes 6} \\
	\& \twoonetwo^{\otimes 6} \&\& \oneonebartwo^{\otimes 6} \\
	\& \bartwoonebartwo^{\otimes 2}\otimes \twoonetwo^{\otimes 4} \&\& \twotwobarone^{\otimes 3}\otimes\oneonebartwo^{\otimes 3} \\
	\& \bartwoonebartwo^{\otimes 4}\otimes \twoonetwo^{\otimes 2} \&\& \twotwobarone^{\otimes 6} \& \bartwobartwobarone^{\otimes 2}\otimes \twotwobarone \otimes \oneonebartwo^{\otimes 3} \\
	\baronetwobarone^{\otimes 3}\otimes \bartwoonebartwo\otimes \twoonetwo^{\otimes 2} \& \bartwoonebartwo^{\otimes 6} \&\& \bartwobartwobarone^{\otimes 2}\otimes \twotwobarone^{\otimes 4} \\
	\& \baronetwobarone^{\otimes 3}\otimes \bartwoonebartwo^{\otimes 3} \&\& \bartwobartwobarone^{\otimes 4}\otimes \twotwobarone^{\otimes 2} \\
	\& \baronetwobarone^{\otimes 6} \&\& \bartwobartwobarone^{\otimes 6} \\
	\&\& \baronebartwobarone^{\otimes 6}
	\arrow["1^6"', from=1-3, to=2-2,blue]
	\arrow["2^6", from=1-3, to=2-4,red]
	\arrow["2^6"', from=2-2, to=3-2,red]
	\arrow["1^6", from=2-4, to=3-4,blue]
	\arrow["2^6"', from=3-2, to=4-2,red]
	\arrow["1^6"', from=3-4, to=4-4,blue]
	\arrow["2^6", from=3-4, to=4-5,red]
	\arrow["1^6"', from=4-2, to=5-1,blue]
	\arrow["2^6", from=4-2, to=5-2,red]
	\arrow["2^6"', from=4-4, to=5-4,red]
	\arrow["1^6", from=4-5, to=5-4,blue]
	\arrow["2^6"', from=5-1, to=6-2,red]
	\arrow["1^6", from=5-2, to=6-2,blue]
	\arrow["2^6", from=5-4, to=6-4,red]
	\arrow["1^6"', from=6-2, to=7-2,blue]
	\arrow["2^6", from=6-4, to=7-4,red]
	\arrow["2^6"', from=7-2, to=8-3,red]
	\arrow["1^6", from=7-4, to=8-3,blue]
\end{tikzcd} 
\caption{The $6$-dilation $\Theta_6(\B(1^2|1)$ of the $B_2$-crystal $\B(1^2|1)$.}\label{fig:6-dilation}
\end{figure}

\bibliography{bibliography}
\bibliographystyle{alpha}

\end{document}